\documentclass[reqno]{amsart}% 

\usepackage[utf8]{inputenc}
\usepackage[T1]{fontenc}
\usepackage[english]{babel}
\usepackage{amsmath}
\usepackage{amssymb}
\usepackage{amsthm}
\usepackage{systeme}
\usepackage{faktor} %fraccio dideals
\usepackage{mathtools}

\numberwithin{equation}{section}
\usepackage[pagewise]{lineno}%\linenumbers 
\usepackage{amsaddr}%

\usepackage{enumerate} %opcional
\usepackage{graphicx}  %opcional
\usepackage[shortlabels]{enumitem}
\usepackage{bm} % 
\usepackage{emptypage} %
\usepackage{hyperref}
\usepackage{tikz-cd}

\usepackage{csquotes}

% We added [theorem] to all macros used
\newtheorem{theorem}{Theorem}[section]
\newtheorem{corollary}[theorem]{Corollary}

\newtheorem{lemma}[theorem]{Lemma}
\newtheorem{proposition}[theorem]{Proposition}

%New
\newenvironment{taggedtheorem}[1]
 {\taggedtheoremx}
 {\endtaggedtheoremx}
%We want these theorems with letters.
%We would use the simpler:
%\newtheorem{theorem}{Theorem}
%\renewcommand*{\thetheorem}{\Alph{theorem}}
%But it does not seem to work in this tamplate

\theoremstyle{definition}
\newtheorem{definition}[theorem]{Definition}
\newtheorem{remark}[theorem]{Remark}
\newtheorem*{notation}{Notation}

\newtheorem{ex}[theorem]{Example} %New

\DeclareMathOperator{\Ima}{Im}

\DeclareMathOperator{\Ide}{Id}

\usepackage{fancyhdr} 
\usepackage{todonotes}

\lhead[\thepage]{\rightmark}
\chead[]{}
\rhead[\leftmark]{\thepage}

\lfoot[]{}
\cfoot[]{}
\rfoot[]{}

\fancypagestyle{plain}{
\fancyhead[L]{}
\fancyhead[C]{}
\fancyhead[R]{}
\fancyfoot[L]{}
\fancyfoot[C]{\thepage}
\fancyfoot[R]{}

}

\usepackage{xparse}
\makeatletter
\RenewDocumentCommand{\title}{om}{%
   \IfNoValueTF{#1}
     {\gdef\shorttitle{GENERALISATION OF BABBAGE EQUATION}}
     {\gdef\shorttitle{#1}}%
   \gdef\@title{#2}%
}
\makeatother

% 

% \newenvironment{changemargin}[2]{% 
% \begin{list}{}{% 
% \setlength{\topsep}{0pt}% 
% \setlength{\leftmargin}{#1}% 
% \setlength{\rightmargin}{#2}% 
% \setlength{\listparindent}{\parindent}% 
% \setlength{\itemindent}{\parindent}% 
% \setlength{\parsep}{\parskip}% 
% }% 
% \item[]}{\end{list}} 

%  WATER MARK
\usepackage[firstpage]{draftwatermark}
\SetWatermarkText{Preprint of: doi:10.3934/dcds.2020303 }
\SetWatermarkFontSize{0.85cm}
\SetWatermarkScale{1}
\SetWatermarkAngle{0}
\SetWatermarkColor[gray]{0.4}
\SetWatermarkVerCenter{2.5cm}

\begin{document}

\title{A generalisation of the Babbage functional equation}

\author{Marc Homs-Dones}
\address{Mathematical Institute, University of Oxford, England.}

%\email{marc.homsdones@maths.ox.ac.uk}

\begin{abstract}
    A recent refinement of Ker\'ekj\'art\'o's Theorem has shown that in $\mathbb R$ and $\mathbb R^2$ all $\mathcal C^l$--solutions of the functional equation $f^n =\textrm{Id}$ are $\mathcal C^l$--linearizable, where $l\in \{0,1,\dots \infty\}$. When $l\geq 1$, in the real line we prove that the same result holds for solutions of $f^n=f$, while we can only get a local version of it in the plane. Through examples, we show that these results are no longer true when $l=0$ or when considering the functional equation $f^n=f^k$ with $n>k\geq 2$.
\end{abstract}

\subjclass[2010]{Primary: 37C15, 39B12; Secondary: 37C05, 37E05, 	37E30, 39B22, 54H20.}

\keywords{Babbage functional equation,  functional equation, Kerékjártó theorem, periodic map, idempotent map, linearization, topological conjugacy, Hardy–Weinberg equilibrium.}

\date{\today}

\maketitle

\section{Introduction}

In 1815, Babbage proposed the systematical study of $n$th order functional equations, i.e.
\begin{equation}
    F(x,f(x),f^2(x),\dots, f^n(x))=0,
    \label{Babbage_eq_general}
\end{equation}
where solutions $f$ are searched for a  given  $F$ and $f^n=f\circ f^{n-1}$ with $f^0=\Ide$. Already in his paper \cite{Babbage_original},
he emphasized the particular case,
\begin{equation}\label{babbage_eq}
    f^n= \Ide,
\end{equation}
which is known as \emph{Babbage's functional equation} and has been intensively investigated up until now
(see \cite{Baron2001,kerejarto_smooth}).
The solutions of \eqref{babbage_eq} are called \emph{periodic functions} or $n$th iterative roots of the identity, and their behavior depends greatly on the regularity of $f$ and its definition set.

We will mostly limit our study to functions $f\in \mathcal C^l$ defined on manifolds.
Moreover, we will only worry about the dynamics
defined by $f$, and thus we will study the functions up to conjugacy. In this area, there are many classical results which state that in $\mathbb R$, $\mathbb R^2$, $\mathbb S^1$ and $\mathbb S^2$ all solutions of \eqref{babbage_eq} are \emph{linearizable}, i.e. are topologically conjugated to a linear map (see Section \ref{review_periodic}).

The main goal of this paper is to find a similar classification
for the functional equation,
\begin{equation}
    f^n= f^k,
    \tag{$\star$}
    \label{ f^n= f^k}
\end{equation}
  where $n,k\in \mathbb N\cup \{0\}$ and $n>k$, which clearly is a generalization of the Babbage's functional equation, but in fact, it is a particular case of \eqref{Babbage_eq_general}.  Notice that when $f$ is bijective, equations \eqref{ f^n= f^k} and \eqref{babbage_eq} are equivalent.
Solutions of \eqref{ f^n= f^k} appear frequently in many sciences, especially solutions of $f^2=f$. Some simple examples are projections and rounding functions. A more complex one is the Hardy-Weinberg equilibrium, which is frequently used in biology (see  \cite[Chapter 11]{Population_dynamics}).
In a two allele population this equilibrium states that the function
\[f(p,q)=\left (\left(p+\frac{q}{2}\right )^2,2\left(p+\frac{q}{2}\right )\left (1-p-\frac{q}{2}\right )\right ),\]
is \emph{idempotent}, i.e. $f^2=f$. Moreover, in \cite{Yap_simpler} it is shown that a more refined version of this equilibrium gives rise
to a map such that $f^3=f^2$ and $f^2\not =f$.

\subsection{Main results}

We will always assume that $n>1$, except in Section \ref{review_periodic}. A function $f$ is in $\mathcal{C}^l$ if it is  $l$--times continuously differentiable. If $l=\infty$ we say that $f$ is smooth and if $f$ is analytic we say it is in $\mathcal C^\omega$. Note that differentiable functions do not need to be in $\mathcal C^1$.

\begin{definition}
A function $f:U\subset \mathbb R^m\rightarrow U$ is $\mathcal C^l$--\emph{linearizable} if it is $\mathcal C^l$--conjugated to a linear map $L:\mathbb R^m\rightarrow \mathbb R^m$. That is, there exists a $\mathcal C^l$--diffeomorphism $\varphi:U \rightarrow \mathbb R^m$ which conjugates $f$ with $L$, i.e.  $ \varphi \circ f = L  \circ \varphi $.
\label{def_lin}

\end{definition}

In the one-dimensional case our main results are:

\begin{taggedtheorem}{A}
Let $f:\mathbb R \rightarrow  \mathbb R$ be a differentiable function, such that $f^n=f$, then  $f$ is differentiably linearizable. Moreover, if $f\in \mathcal C^l$, with $l\in \{1,\dots,\infty,\omega\}$, then it is $\mathcal C^l$--linearizable.
\label{theorem_A}
\end{taggedtheorem}

\begin{taggedtheorem}{B}
Let $f:\mathbb R\rightarrow \mathbb R$ be an analytic function satisfying \eqref{ f^n= f^k}. Then $f$ is $\mathcal C^\omega$--linearizable.
\label{theorem_B}
\end{taggedtheorem}

In Example \ref{idemp_exemple} we show examples of continuous functions such that  $f^n=f^k$ for all $n>k\geq 1$ and are not linearizable. Moreover, in Example \ref{idemp_exem_smooth} we show examples of smooth functions such that $f^n=f^k$ for all $n>k\geq 2$ and are not linearizable. Thus, the previous results are sharp. Furthermore, for both cases, we give an uncountable family of solutions not topologically conjugated to each other.

In the two-dimensional case we prove:

\begin{taggedtheorem}{C}
Let $f:\mathbb R ^2\rightarrow \mathbb R^2$ be smooth, non-periodic and non-constant. Then, $f^n=f$ if and only if  $f^3=f$. Moreover, up to smooth conjugacy we have:
\begin{itemize}
\item the solutions of $f^2=f$ are exactly $g(x,y)= (x+y\mathfrak g(x,y),0)$ with $\mathfrak g\in \mathcal C^\infty$ an arbitrary function;
\item the solutions of  $f^3=f$ and  $f^2\not = f$ are exactly $g(x,y)= (-x+y\mathfrak g(x,y),0)$ with $\mathfrak g\in \mathcal C^\infty$ an arbitrary function.
\end{itemize}
\label{rectifica}
\end{taggedtheorem}

\begin{taggedtheorem}{D}
Let $f:\mathbb R^2\rightarrow\mathbb R^2$ be a $\mathcal C^l$--solution of $f^n=f$ with $l\in \{1,\dots, \infty\}$. Then, in a neighborhood of $\Ima f$, $f$ is $\mathcal C^l$--linearizable.
\label{local_linearizable}
\end{taggedtheorem}

We also get partial results for the general case $f^n=f^k$ and we show in Example \ref{polinomis_no_proj} that there exist non globally  linearizable polynomial functions such that $f^n=f^k$ for all $n>k\geq 1$. Furthermore, the map defined in the forthcoming equation \eqref{e^1/x} shows that Theorem \ref{local_linearizable} does not hold for general solutions of \eqref{ f^n= f^k}.

In $\mathbb R^m$ we show that if $f$ is a $\mathcal C^l$--solution of \eqref{ f^n= f^k},  with $l\in \{1,\dots,\infty\}$  and $d=\dim(\Ima f^k)\leq 2$, then $\Ima f^k$ is a $\mathcal C^l$--submanifold diffeomorphic to $\mathbb R^d$ (see Proposition \ref{Imaf^k_R^m}).
Moreover, we show that in this case the only obstruction from getting generalizations of Theorem \ref{rectifica} and \ref{local_linearizable} is the fact that $\Ima f^k$ may be knotted in $\mathbb R^m$. In Section \ref{manifolds}, we explore basic properties of  solutions of \eqref{ f^n= f^k} when defined on manifolds (we use this term for manifolds without boundary).

Finally, in Section \ref{section_hardy} we show that not only the Hardy-Weinberg equilibrium in any dimension is an idempotent process, but it is also conjugated to a projection. We also show that the generalization given in \cite{Yap_simpler} is not linearizable.

\subsection{Review of the periodic case}
\label{review_periodic}

\begin{definition}
Given a periodic function $f$, its \emph{period} is the minimum $n>0$ for which $f^n=\Ide$.

Functions with period 2 are called  \emph{involutions}.

We will use  $I$ to denote any type of proper interval, i.e. with at least two distinct points.
\end{definition}

In the real line the following results have been known for a long time (see for instance \cite[Corollary 1]{fn_f} and \cite{f2_Id}).

\begin{proposition}
Let $n\in \mathbb N\setminus \{0\}$ and $f:I\rightarrow I$  be continuous. If $n$ is odd, $f^n=\Ide $ if and only if $f=\Ide$. If $n$ is even, $f^n=\Ide $ if and only if $f^2=\Ide$. Moreover, $f$ is decreasing if and only if it is an involution, i.e. $f^2=\Ide$ and $f\not =\Ide $.
\label{kerejarto_1dim_I}
\end{proposition}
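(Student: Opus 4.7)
The plan is to leverage the fact that any solution of $f^n=\Ide$ is a continuous bijection on an interval, and therefore strictly monotonic, so everything reduces to understanding how iteration interacts with monotonicity.

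First I would observe that if $f^n=\Ide$ then $f$ has $f^{n-1}$ as a two-sided inverse, so $f$ is a bijection. Combined with continuity on the interval $I$, this forces $f$ to be strictly monotonic. The key technical lemma I would prove is: \emph{if $f:I\to I$ is continuous, strictly increasing and satisfies $f^n=\Ide$ for some $n\geq 1$, then $f=\Ide$.} The argument is by contradiction: if $f(x_0)\neq x_0$ for some $x_0$, say $f(x_0)>x_0$, monotonicity gives $f^{j+1}(x_0)>f^j(x_0)$ inductively, so $f^n(x_0)>x_0$, contradicting $f^n(x_0)=x_0$; the case $f(x_0)<x_0$ is symmetric.

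With this lemma in hand, the two main statements fall out by a parity analysis. If $f$ is strictly decreasing, then $f^n$ is strictly decreasing for odd $n$ and strictly increasing for even $n$. In the odd case, $f^n=\Ide$ (which is strictly increasing) forces $f$ to be strictly increasing, and then the lemma yields $f=\Ide$; conversely $f=\Ide$ trivially satisfies $f^n=\Ide$. In the even case, $f^n=(f^2)^{n/2}=\Ide$ with $f^2$ strictly increasing, so the lemma applied to $f^2$ gives $f^2=\Ide$; the converse is immediate since $f^n=(f^2)^{n/2}=\Ide^{n/2}=\Ide$.

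For the \emph{moreover} clause, I would assume $f$ satisfies $f^n=\Ide$ (the standing hypothesis of the proposition). If $f$ is strictly decreasing, then $f\neq\Ide$ and the even-case analysis above forces $f^2=\Ide$, so $f$ is an involution. Conversely, if $f^2=\Ide$ with $f\neq\Ide$, then $f$ is a continuous bijection, hence strictly monotonic; it cannot be strictly increasing by the key lemma applied with $n=2$, so it must be strictly decreasing.

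I do not expect a serious obstacle here; the only subtlety is making sure that the chain $f^{j+1}(x_0)>f^j(x_0)$ in the key lemma genuinely uses strict monotonicity (so that the inequalities propagate to produce the contradiction $f^n(x_0)\neq x_0$), and that the parity bookkeeping in passing from $f^n=\Ide$ to $(f^2)^{n/2}=\Ide$ is handled cleanly.
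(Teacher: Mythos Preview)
Your argument is correct and is the standard elementary proof of this classical fact. Note, however, that the paper does not give its own proof of this proposition: it is stated as a known result with references to the literature, so there is no ``paper's proof'' to compare against. Your write-up would serve as a perfectly adequate self-contained justification.

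Two minor points of polish. First, in the ``moreover'' direction you invoke ``the even-case analysis'' after assuming $f$ is strictly decreasing, but you should say explicitly why $n$ may be taken even there: if $n$ were odd you have already shown $f=\Ide$, contradicting that $f$ is decreasing. Second, calling $f^n=\Ide$ ``the standing hypothesis of the proposition'' is not quite accurate---it appears only inside the biconditionals---but you are right that the ``moreover'' clause only makes sense under that assumption, so your reading is the intended one.
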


\begin{remark}
If $f$ is a differentiable involution by the chain rule we have \break${f'(f(x))f'(x)=1}$ and since $f$ is decreasing, $f'<0$.
\label{f'<0}
\end{remark}

Now considering the conjugation given by $f-\Ide$ and the Inverse Function Theorem (for differentiable injective functions) it is easy to prove:

\begin{proposition}
Let $l\in \{0,\dots, \infty,\omega\}$, all $\mathcal C^l$--involutions defined in $\mathbb R$ are $\mathcal C^l$--conjugated to $-\Ide$. The same is true in  the class of differentiable functions.
\label{kerejarto_1dim_I_invol}
\end{proposition}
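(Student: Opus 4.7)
The plan is to conjugate $f$ with the map $\varphi(x) := f(x) - x$, as suggested in the sentence preceding the proposition. The algebraic identity is immediate from $f^2 = \Ide$:
\[
\varphi(f(x)) = f^2(x) - f(x) = x - f(x) = -\varphi(x),
\]
so $\varphi \circ f = -\varphi$, equivalently $\varphi \circ f \circ \varphi^{-1} = -\Ide$, provided $\varphi$ is a bijection of $\mathbb R$ with the appropriate regularity. The whole problem therefore reduces to showing that $\varphi$ is a $\mathcal C^l$--diffeomorphism (respectively a homeomorphism when $l=0$, and a differentiable bijection with differentiable inverse in the merely differentiable case).

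For injectivity and local invertibility, Proposition~\ref{kerejarto_1dim_I} says that a continuous involution of $\mathbb R$ is strictly decreasing, so $\varphi$ is strictly decreasing and therefore injective already at the $\mathcal C^0$ level. When $f$ is differentiable, Remark~\ref{f'<0} gives $f'(x) < 0$, whence $\varphi'(x) = f'(x) - 1 < -1 < 0$ for every $x$, so $\varphi'$ never vanishes; by the Inverse Function Theorem, in its $\mathcal C^l$, $\mathcal C^\omega$, and differentiable--injective versions, $\varphi^{-1}$ inherits the regularity of $\varphi$.

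For surjectivity onto $\mathbb R$, I would use that a continuous decreasing involution of $\mathbb R$ is a bijection with a unique fixed point $p$, and hence $f(x) \to -\infty$ as $x \to +\infty$ and $f(x) \to +\infty$ as $x \to -\infty$. Therefore $\varphi(x) = f(x) - x$ tends to $-\infty$ at $+\infty$ and to $+\infty$ at $-\infty$, and the intermediate value theorem delivers surjectivity. Together with the previous step this shows $\varphi$ is the required conjugating map sending $f$ to $-\Ide$. The only point requiring care is the bare differentiable (non-$\mathcal C^1$) case, where one must invoke the version of the Inverse Function Theorem that requires only differentiability plus a nonvanishing derivative at every point; this is precisely the statement flagged parenthetically before the proposition and is the sole nonstandard ingredient.
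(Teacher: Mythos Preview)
Your proof is correct and follows exactly the route the paper indicates: conjugate by $\varphi = f - \Ide$, verify the algebraic identity $\varphi\circ f = -\varphi$, and use monotonicity together with the Inverse Function Theorem (in its various regularities) to show $\varphi$ is a diffeomorphism of $\mathbb R$. You have simply supplied the details the paper leaves implicit.
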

In the continuous case, the periodic functions on the circle are described in the following result, which is a straightforward consequence of the main theorem in~\cite{Babbage_circle}.

\begin{theorem}
Let $f:\mathbb S^1\rightarrow \mathbb S^1$ be a continuous function of  period $n$. Then, $f$ is a homeomorphism and,

\begin{itemize}
    \item if $f$ is order-preserving, $f$ is topologically conjugated to a rotation of angle $2\pi r/n$ where $r$ and $n$ are coprimes;
    \item if $f$ is order-reversing, $f$ is topologically conjugated to reflection through the $x$--axis.
\end{itemize}
\label{S^1 periodic}
\end{theorem}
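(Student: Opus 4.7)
The plan splits naturally into a preliminary step followed by the two cases in the statement. First, since $f^n = \Ide$, the map $f^{n-1}$ is a two-sided inverse of $f$, so $f$ is a continuous bijection of the compact Hausdorff space $\mathbb S^1$ and hence a homeomorphism. In particular $f$ is either orientation-preserving or orientation-reversing, and these two cases can be treated separately.

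For the order-preserving case I would appeal to the Poincar\'e theory of the rotation number $\rho(f)$. The hypothesis $f^n = \Ide$ forces every orbit to be periodic of period dividing $n$, which makes $\rho(f)$ rational; combined with the assumption that $n$ is the minimum period, this gives $\rho(f) = r/n$ in lowest terms. Because no orbit is aperiodic, Denjoy-type exceptional minimal sets are excluded, and the classical classification theorem for circle homeomorphisms with rational rotation number and no exceptional orbits gives that $f$ is topologically conjugate to the rigid rotation by angle $2\pi r/n$.

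For the order-reversing case I would first reduce to the involutive setting. Lifting $f$ to a strictly decreasing homeomorphism $\tilde f:\mathbb R\to\mathbb R$ with $\tilde f(x+1)=\tilde f(x)-1$, the continuous function $\tilde f-\Ide$ decreases by $2$ on each unit interval, so by the intermediate value theorem it vanishes at two points modulo $1$, producing two fixed points $p, q$ of $f$. Removing $p$ yields an open interval on which $f$ restricts to an order-reversing continuous map still satisfying $f^n = \Ide$, so Proposition \ref{kerejarto_1dim_I} forces $f^2 = \Ide$ on $\mathbb S^1\setminus\{p\}$, and hence on all of $\mathbb S^1$ by continuity.

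It remains to construct a conjugacy with the standard reflection $R$. Since $f$ is orientation-reversing and fixes $p$, near $p$ it swaps the two incident half-arcs, so the two open arcs $A_1, A_2$ of $\mathbb S^1\setminus\{p,q\}$ are interchanged by $f$. I would then define the conjugacy $\varphi$ by sending $p, q$ to the two fixed points of $R$, choosing any orientation-preserving homeomorphism of $A_1$ onto one of the half-circles determined by these fixed points, and extending to $A_2$ by the formula $\varphi(x) := R(\varphi(f(x)))$. The main, though mild, obstacle is verifying that $\varphi$ is continuous at $p$ and $q$, which follows from the continuity of $f$ across these points together with the fact that $f$ swaps the two arcs.
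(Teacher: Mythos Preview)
The paper does not prove this theorem; it is stated as a known background result, attributed to \cite{Babbage_circle}. There is therefore no proof in the paper to compare your proposal against.

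Your argument is correct. In the orientation-preserving case the appeal to rotation number theory is the standard route: $f^n=\Ide$ makes every orbit periodic, which for rational rotation number is precisely the condition upgrading the Poincar\'e semi-conjugacy to a genuine conjugacy, and minimality of $n$ then forces $\gcd(r,n)=1$. In the orientation-reversing case your reduction, after deleting a fixed point, to Proposition~\ref{kerejarto_1dim_I} is clean and very much in the spirit of the paper's one-dimensional arguments; the explicit conjugacy with the reflection is a standard fundamental-domain construction, and the continuity check at $p$ and $q$ goes through as you outline since $f$ fixes both points and swaps the adjacent arcs.
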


As far as we know no classification has been found in the  class $\mathcal C^l$. In two dimensions we have Kerékjártó's Theorem.

\begin{theorem}%[Kerékjártó]
Let $f:\mathbb R^2\rightarrow \mathbb R^2$ be a $n$ periodic $\mathcal C^l$--function with $l\in \{0,\dots, \infty\}$.
Then, $f$ is $\mathcal C^l$--conjugated to a rotation of angle $2\pi r /n$ (with $r$ and $n$ coprimes) centered at the origin or a reflection through the $x$--axis.

\label{kerejarto_R2}
\end{theorem}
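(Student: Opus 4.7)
The plan is to split the proof into an orientation-preserving step and a reduction for orientation-reversing $f$, locate a fixed point, linearize the derivative at it, and finally globalize the local conjugacy. First, if $f$ reverses orientation then $n$ is even and $g:=f^2$ preserves orientation with period $n/2$; if the orientation-preserving case produces $\psi$ conjugating $g$ to a rotation $R$, then $h:=\psi\circ f\circ\psi^{-1}$ is orientation-reversing and satisfies $h^2=R$, which pins $h$ down to be reflection through the $x$--axis after one further orthogonal change of coordinates (since the involutions of $\mathbb R^2$ that square to a rotation are forced into $O(2)\setminus SO(2)$).

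For orientation-preserving $f$, Brouwer's plane translation theorem produces a fixed point $p$ (every periodic orientation-preserving homeomorphism of $\mathbb R^2$ has one). Set $A:=Df(p)$; since $A^n=I$ and $\det A>0$, there is $P\in GL^+_2(\mathbb R)$ with $PAP^{-1}$ equal to a rotation $R$ of angle $2\pi r/n$ with $\gcd(r,n)=1$. After affine change of coordinates we may assume $p=0$ and $Df(0)=R$. For $l\geq 1$ define the averaging map
\[
\varphi(x)=\frac{1}{n}\sum_{k=0}^{n-1}R^{-k}f^k(x),
\]
which is $\mathcal C^l$ and satisfies $\varphi\circ f=R\circ\varphi$ together with $D\varphi(0)=I$. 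Hence $\varphi$ is a local $\mathcal C^l$--diffeomorphism at the origin.

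The main obstacle, and the most delicate part of the argument, is promoting $\varphi$ to a global $\mathcal C^l$--diffeomorphism of $\mathbb R^2$ (for $l\geq 1$), and treating the case $l=0$ where no averaging formula is available. Both restrictions $f|_{\mathbb R^2\setminus\{0\}}$ and $R|_{\mathbb R^2\setminus\{0\}}$ generate free proper $\mathbb Z/n\mathbb Z$--actions whose quotients are each homeomorphic to an open annulus, and $\varphi$ intertwines them. For $l\geq 1$ one shows $\varphi$ is proper via a control-at-infinity estimate on $\|R^{-k}f^k(x)-x\|$, so the induced local diffeomorphism between annular quotients has degree one and is therefore a global diffeomorphism by a standard covering-space argument. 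For $l=0$ one instead selects an $f$--invariant fundamental arc emanating from $p$ to infinity, transports it to a radial sector of $R$, and extends equivariantly; verifying continuity of the resulting conjugacy at the fixed point $p$ is the essential content of the original topological Kerékjártó argument and is where the genuine difficulty of the theorem lies.
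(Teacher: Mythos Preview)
First, note that the paper does not prove this theorem: it is quoted in the review section as background, with the continuous case attributed to Ker\'ekj\'art\'o and the $\mathcal C^l$ refinement to the recent papers \cite{kerejarto_c1,kerejarto_smooth}. So there is no in-paper argument to compare against, and your proposal must stand on its own.

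Your local set-up (Brouwer fixed point, linearization of $Df(p)$, Bochner averaging $\varphi$) is correct for $l\geq 1$, but the globalization step is a genuine gap. The assertion that $\varphi$ is proper ``via a control-at-infinity estimate on $\|R^{-k}f^k(x)-x\|$'' is unsupported: periodicity of $f$ imposes no growth control, and there is no reason for $f^k(x)$ to track $R^k x$ far from the fixed point, so the averaged map need not be proper or even injective. Producing a \emph{global} $\mathcal C^l$--conjugacy is precisely the substance of the cited recent work, and it is not obtained simply from the averaging formula plus a covering argument. Your orientation-reversing reduction is also incomplete: after conjugating $f^2$ to a rotation $R$ you obtain $h$ with $h^2=R$, but $h$ is not yet known to be an involution, so the parenthetical about ``involutions that square to a rotation'' does not apply. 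One must first force $R=\mathrm{Id}$ (for $l\geq 1$ this follows because $Dh(0)\in GL_2^-(\mathbb R)$ with $Dh(0)^n=I$ has eigenvalues $\pm 1$, hence $Dh(0)^2=I$), and then one still has to conjugate an orientation-reversing $\mathcal C^l$--involution to the standard reflection, which is again the unresolved globalization problem. For $l=0$ you explicitly defer to the classical argument, so that case is not addressed at all; the proposal is an outline of the known strategy rather than a proof.
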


The $\mathcal C^l$ case is a recent result presented in \cite{kerejarto_c1,kerejarto_smooth}, whereas the continuous case is a classical result   \cite{Kerejarto_original}. A modern approach for the continuous case can be found in \cite{Kerejarto_constantin} where first an analogous result in the closed disc is seen.
Then, Theorem \ref{kerejarto_R2} follows from the study of periodic functions in the sphere.

\begin{theorem}
Let $f:\mathbb S^2\rightarrow \mathbb S^2$ be a continuous periodic function. Then, $f$ is topologically conjugated to an element of the orthogonal group $O(3)$.
\label{kerejarto en S^2}
\end{theorem}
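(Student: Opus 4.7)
The plan is to reduce to the analogous statement on the closed disc described in \cite{Kerejarto_constantin} by cutting $\mathbb S^2$ along a $G$--invariant Jordan curve, where $G=\langle f\rangle\cong \mathbb Z/n\mathbb Z$. First I would observe that $f$ is automatically a homeomorphism, since $f^{n-1}$ supplies a continuous two-sided inverse, so $G$ acts on $\mathbb S^2$ by homeomorphisms. The argument then splits according to whether $f$ preserves orientation.

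In the orientation-preserving case I would show that either $f=\Ide$ or $\mathrm{Fix}(f)=\{p,q\}$ for exactly two points $p,q$. The Lefschetz number of any orientation-preserving self-homeomorphism of $\mathbb S^2$ equals $2$, so there are at least two fixed points counted algebraically, while Smith theory applied to each prime-power subgroup of $G$ rules out any larger or non-isolated fixed set (a non-trivial $\mathbb Z/p$--action on $\mathbb S^2$ preserving orientation has a mod--$p$ cohomology $S^{0}$ as its fixed set). Assuming $f\neq \Ide$, I would construct a $G$--invariant Jordan curve $\gamma$ through $p$ and $q$, for instance by taking the $G$--orbit of a carefully chosen arc from $p$ to $q$ in $\mathbb S^2\setminus\{p,q\}$ and extracting an embedded curve. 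The curve $\gamma$ separates $\mathbb S^2$ into two closed $G$--invariant discs, to each of which the disc version of Kerékjártó's theorem applies, yielding a conjugacy with a rotation of angle $2\pi r /n$. The two hemispherical conjugacies agree on $\gamma$ by uniqueness of the rotation number and assemble into a global conjugacy with an element of $SO(3)$.

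In the orientation-reversing case, $f^n=\Ide$ forces $n$ to be even; applying the preceding case to $f^2$ places that iterate in standard form, while the remaining involution $g=f^{n/2}$ is orientation-reversing. A classical analysis shows that $\mathrm{Fix}(g)$ is either empty or a tame Jordan curve, and in each sub-case one produces an explicit conjugacy between $g$ and a reflection (or antipodal map) in $O(3)\setminus SO(3)$; combining with the rotation coming from $f^2$ exhibits $f$ as conjugate to an element of $O(3)$.

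The main obstacle I expect is the construction of the $G$--invariant Jordan curve $\gamma$: in the purely topological category one cannot appeal to averaging of a Riemannian metric, and one must instead perform a delicate cut-and-paste of arcs in $\mathbb S^2\setminus\{p,q\}$ to produce an embedded invariant circle. A secondary difficulty is ensuring, in the orientation-reversing case, that the conjugacy constructed for $f^2$ is simultaneously compatible with the involution $g=f^{n/2}$, so that both maps are intertwined by a single global homeomorphism of $\mathbb S^2$.
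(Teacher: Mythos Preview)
The paper does not prove this theorem at all: it is quoted in the review Section~\ref{review_periodic} as a classical result, with the continuous case attributed to \cite{Kerejarto_original} and the modern treatment to \cite{Kerejarto_constantin}. There is therefore nothing in the paper to compare your argument against; the authors simply invoke the statement.

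That said, your outline has two genuine gaps worth flagging. First, the heart of the orientation-preserving case is the existence of a $G$--invariant Jordan curve through the two fixed points, and you correctly identify this as the main obstacle but do not indicate how to overcome it; ``taking the $G$--orbit of a carefully chosen arc and extracting an embedded curve'' hides the entire difficulty, since the orbit of an arc is generically an immersed graph, not a simple closed curve. The route actually taken in \cite{Kerejarto_constantin} avoids this construction entirely: one removes a single fixed point, works on the resulting open disc (or its one-point compactification), and appeals to the closed-disc version directly. Second, in the orientation-reversing case your decomposition is not quite right: $g=f^{n/2}$ has degree $(-1)^{n/2}$, so it is orientation-reversing only when $n\equiv 2\pmod 4$; when $4\mid n$ your involution $g$ is orientation-preserving and the case split collapses. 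More seriously, even when the parity works out, conjugating $f^{2}$ to a rotation by some $\varphi_{1}$ and $g$ to a reflection by some $\varphi_{2}$ does not produce a conjugacy for $f$ itself unless $\varphi_{1}=\varphi_{2}$, and nothing in your sketch forces this compatibility.
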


Informally, all previous results can be restated as: in the manifolds $\mathbb R$, $\mathbb R^2$, $\mathbb S^1$ and $\mathbb S^2$, all periodic solutions are ``linearizable''. In this case, by ``linearizable'' in $\mathbb S^m$  we mean that they are conjugated to a linear map restricted to $\mathbb S^m\subset \mathbb R^{m+1}$ instead of the definition given in Definition \ref{def_lin}.

In higher dimensions such results do not exist.
For instance, in $\mathbb R^3$ there are periodic homeomorphisms such that their set of fixed points form a wild plane. It is well known that an homeomorphism of $\mathbb R^3$  cannot send a wild plane to an affine one, and thus these periodic functions cannot be linearizable. In \cite{Inequivalent_families} an uncountable number of such functions not conjugated to each other are presented. In fact, it is shown that for every period there are uncountable many equivalence classes up to topological conjugacy. Similar results are presented when considering periodic homeomorphisms in $\mathbb S^3$.

In the differential case we do not have a classification either. For instance, in $\mathbb R^7$ if $n>1$ is not a prime power there are uncountable many topological equivalent classes of period $n$ without any fixed point (see \cite{no_fix_R7}).
Since all linear maps have a fixed point, these cannot be linearizable.

\subsection{Elementary properties}

Given a set $A$ and $f:A  \rightarrow A$ it is easy to check that $\Ima f^{l+1} \subset\Ima f^l$ for all  $l\in \mathbb N$. Hence, $(f_{|\Ima f^l})^r$ is well defined and $(f_{|\Ima f^l})^r=(f^r)_{|\Ima f^l}$. We will denote this function by  $f^{r}_{|\Ima f^l}$. Moreover, we will write $f^{r}_{|\Ima f^l}=\Ide$ if $f^{r}_{|\Ima f^l}:\Ima f^l\rightarrow \Ima f^l $ is the identity.

We proceed to state several basic properties of any function that solves \eqref{ f^n= f^k}.

\begin{proposition}
Let $A$ be a set and $f:A  \rightarrow A$, then $f$ satisfies \eqref{ f^n= f^k} if and only if $f^{n-k}_{|\Ima f^k} =\Ide$. In this case,  $f_{|\Ima f^k}:\Ima f^k\rightarrow \Ima f^k$ is bijective.

\label{resultat_conjunts}
\end{proposition}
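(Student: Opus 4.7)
The plan is to prove the equivalence by direct verification on both sides, using the elementary inclusion $\Ima f^{l+1}\subset \Ima f^l$ noted just before the statement, and then read off bijectivity from the existence of a two-sided inverse.

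For the forward direction, I would assume $f^n=f^k$ and pick an arbitrary $y\in \Ima f^k$, writing $y=f^k(x)$ for some $x\in A$. Applying $f^{n-k}$ and using associativity of composition gives $f^{n-k}(y)=f^{n-k}(f^k(x))=f^n(x)=f^k(x)=y$, so every point of $\Ima f^k$ is fixed by $f^{n-k}$. For the reverse direction, I would assume $f^{n-k}_{|\Ima f^k}=\Ide$ and pick any $x\in A$; since $f^k(x)\in \Ima f^k$, the hypothesis gives $f^n(x)=f^{n-k}(f^k(x))=f^k(x)$, which is \eqref{ f^n= f^k}.

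For the bijectivity statement, the inclusion $\Ima f^{k+1}\subset \Ima f^k$ guarantees that $f$ restricts to a self-map $f_{|\Ima f^k}\colon \Ima f^k\to \Ima f^k$, so its iterates are well defined on $\Ima f^k$. The identity $f^{n-k}_{|\Ima f^k}=\Ide$ can then be factored in two ways as
\[
f_{|\Ima f^k}\circ f^{n-k-1}_{|\Ima f^k}=\Ide=f^{n-k-1}_{|\Ima f^k}\circ f_{|\Ima f^k},
\]
exhibiting $f^{n-k-1}_{|\Ima f^k}$ as a two-sided inverse of $f_{|\Ima f^k}$, which therefore is bijective.

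I do not expect any real obstacle here: the proposition is essentially a bookkeeping statement, and the only subtlety is making sure to use the preliminary observation that the images $\Ima f^l$ form a decreasing chain, so that the restricted iterates in the statement actually make sense.
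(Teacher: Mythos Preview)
Your proof is correct and follows essentially the same approach as the paper: the equivalence is checked by writing $f^n=f^{n-k}\circ f^k$ and unraveling what it means on $\Ima f^k$, and bijectivity is obtained by exhibiting $f^{n-k-1}_{|\Ima f^k}$ as the inverse. The paper just compresses your two directions into a single chain of equivalences, but the content is identical.
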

\begin{proof}
We have the following equivalences,
\begin{equation*}
    f^n=f^{n-k}\circ f^k = f^k
    \Longleftrightarrow \forall y\in \Ima f^k, \hspace{2mm} f^{n-k}(y)=y \Longleftrightarrow f^{n-k}_{|\Ima f^k} =\Ide.
\end{equation*}
Since the domain and codomain of $f$ coincide we have $\Ima f^{l+1}\subset \Ima f^l$ for all $l\in \mathbb N$. Thus, $f_{|\Ima f^k}:\Ima f^k\rightarrow \Ima f^k$ is well defined and $f^{n-k-1}_{|\Ima f^k}$ is its inverse.
\end{proof}

The above characterization will be key to study the solutions of \eqref{ f^n= f^k}. Indeed, if we consider $A$ as a set without any further structure, then all solutions of \eqref{ f^n= f^k} can be constructed in the following two steps. First, we choose any subset $B\subset A$ and any periodic function $f_1:B\rightarrow B$ with period dividing $n-k$. Secondly, we choose any function $f_2 : A\setminus B\rightarrow A$ such that $\Ima f_2^k\subset B$. Then, the function $f$ defined as $f_{|B}=f_1$ and $f_{|A\setminus B}=f_2$ satisfies \eqref{ f^n= f^k} and all solutions of the equation are of this form.

We now state two particular cases of Proposition \ref{resultat_conjunts} which will be especially useful.

\begin{remark}
If $f$ is surjective then $\Ima f^k=A$ and $f$ is a periodic function.
\label{exaustiva}
\end{remark}

\begin{remark}
If $f$ solves the equation \eqref{ f^n= f^k} and $f^k$ is constant, i.e. $\Ima f^k$ is a singleton, then $f_{|\Ima f^k}=\Ide$, and thus $f^{k+1}=f^k$.
\label{obs_|Ima f^k|=1}
\end{remark}

The following result is a direct consequence of the characterization given by Proposition \ref{resultat_conjunts}.

\begin{corollary}
Let $A$ be a set and $f:A \rightarrow A $ satisfying \eqref{ f^n= f^k}. Then, $f$ is also a solution of $f^{l_1(n-k)+k}=f^{l_2(n-k)+k}$ for any $l_1,l_2\in \mathbb N\cup \{0\}$.
\label{f^{l_1(n-k)+k}}
\end{corollary}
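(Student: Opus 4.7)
My plan is to reduce the claimed identity to the characterisation given in Proposition \ref{resultat_conjunts}, using the fact that on $\Ima f^k$ the function $f$ is periodic with period dividing $n-k$.

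First I would observe that for any $l\in\mathbb N\cup\{0\}$, the iterate $f^{l(n-k)+k}$ factors as $f^{l(n-k)}\circ f^k$. Since $\Ima(f^k)\subset \Ima f^k$ by definition, restricting the outer iterate to $\Ima f^k$ is legitimate, and we may write
\[
    f^{l(n-k)+k} \;=\; f^{l(n-k)}_{|\Ima f^k}\circ f^k.
\]
This is the key reduction: it isolates the behaviour of $f$ on the set where its dynamics is periodic.

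Next I would apply Proposition \ref{resultat_conjunts}, which gives $f^{n-k}_{|\Ima f^k}=\Ide$. Iterating this identity $l$ times yields $f^{l(n-k)}_{|\Ima f^k}=\Ide$ for every $l\in\mathbb N\cup\{0\}$. Plugging back in, we get $f^{l(n-k)+k}=\Ide\circ f^k=f^k$, independently of $l$. Applying this with $l=l_1$ and $l=l_2$ and chaining the two equalities proves
\[
    f^{l_1(n-k)+k}\;=\;f^k\;=\;f^{l_2(n-k)+k}.
\]

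There is no real obstacle here: the whole argument is essentially a one-line manipulation once one commits to the factorisation above and invokes Proposition \ref{resultat_conjunts}. The only mild subtlety to flag is that when $l=0$ the formula gives back the trivial equality $f^k=f^k$, so the cases $l_1=0$ or $l_2=0$ require no separate treatment.
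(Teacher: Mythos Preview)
Your proof is correct and follows exactly the route the paper indicates: the paper simply states that the corollary is a direct consequence of the characterisation in Proposition~\ref{resultat_conjunts}, and you have spelled out precisely that argument (factor $f^{l(n-k)+k}=f^{l(n-k)}\circ f^k$, use $f^{n-k}_{|\Ima f^k}=\Ide$ to kill the outer iterate, and conclude $f^{l(n-k)+k}=f^k$ for every $l$). There is nothing to add.
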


Hence, if $f$ is an idempotent function, i.e. $f^2=f$, it will be a solution of $f^n=f^k$ for all $n> k\geq 1$.

\begin{proposition}
 Let $A$ be a set and $f:A \rightarrow A $ satisfying \eqref{ f^n= f^k}. Then, the function $h=f^{k(n-k)}$ is idempotent, i.e. $h^2= h $, and $\Ima h =\Ima f^k$.
\label{h^2= h}
\end{proposition}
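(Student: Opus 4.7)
The plan is to exploit Proposition \ref{resultat_conjunts}, which tells us that $f^{n-k}_{|\Ima f^k}=\Ide$. Being the identity on $\Ima f^k$ is preserved under composition, so iterating gives $f^{j(n-k)}_{|\Ima f^k}=\Ide$ for every $j\in\mathbb{N}$. In particular, setting $j=k$ yields $h_{|\Ima f^k}=\Ide$, i.e., every point of $\Ima f^k$ is a fixed point of $h$.

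From this fixed-point property I would read off the image equality $\Ima h=\Ima f^k$. The inclusion $\Ima h\subset \Ima f^k$ comes from iterating the obvious chain $\Ima f^{l+1}\subset \Ima f^l$: when $k\geq 1$ we have $k(n-k)\geq k$, so $\Ima f^{k(n-k)}\subset \Ima f^k$ (the case $k=0$ is trivial, since then $h=\Ide$ and $\Ima f^k=A$). For the reverse inclusion, any $y\in \Ima f^k$ satisfies $h(y)=y$ by the previous paragraph, so $y\in \Ima h$.

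Idempotency then drops out immediately: for any $x\in A$, $h(x)\in \Ima h=\Ima f^k$, and since $h$ restricts to the identity on $\Ima f^k$, we conclude $h(h(x))=h(x)$, i.e., $h^2=h$.

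There is no real obstacle in this proof; it is essentially an unpacking of the characterization supplied by Proposition \ref{resultat_conjunts}, and the only book-keeping required is to keep track of the boundary case $k=0$ where the statement reduces to the tautology $\Ide^2=\Ide$.
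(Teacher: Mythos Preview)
Your proof is correct and rests on the same characterization the paper uses (Proposition~\ref{resultat_conjunts}); the only difference is organizational. You first establish $h_{|\Ima f^k}=\Ide$ and read off both $\Ima h=\Ima f^k$ and $h^2=h$ from that, whereas the paper obtains $h^2=h$ by a direct index computation via Corollary~\ref{f^{l_1(n-k)+k}} (taking $l_1=k$, $l_2=0$ and composing with $f^{k(n-k-1)}$) and then invokes Proposition~\ref{resultat_conjunts} for the image equality---the two arguments are interchangeable and of the same length.
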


\begin{proof}
Using the previous corollary with $l_1=k$ and $l_2=0$ we get $ f^{k(n-k+1)}=f^k$. Now, applying $f^{k(n-k-1)}$ to both sides we have,
\[h^2=f^{2k(n-k)}=f^{k(n-k)}=h.\]
Finally, $\Ima h=\Ima f^k$ due to Proposition \ref{resultat_conjunts}.
\end{proof}

\begin{definition}
We say that  $r:A\rightarrow A$ is a \emph{retraction} if $r$ is continuous and idempotent, i.e. $r^2= r$. In this case we also say that $\Ima r$ is a \emph{retract} of $A$.
\end{definition}
We can restate Proposition \ref{h^2= h} in the continuous case as the following remark:
\begin{remark}
 If $f:A\rightarrow A$ is continuous and satisfies \eqref{ f^n= f^k}, then  $\Ima f^k$ is a retract of $A$. Moreover, if $A$ is Hausdorff by \cite[Page 233, Exercise 4]{Topology_Munkres}, $\Ima f^k$ is closed.
\label{obs_retract}
\end{remark}

\section{One-dimensional case}

We start discussing the simplest non-periodic case. Let $f:I \rightarrow I$ be a continuous idempotent function, i.e. $f^2=f$. Clearly $\Ima f$ is connected,  and by Remark \ref{obs_retract} it is closed in $I$.

Assume now for simplicity that $\Ima f=[a,b]$ (the general case can be seen in \cite{fn_f}). Then, the graphic of $f$ is bounded by the horizontal lines $\{y=a\}$ and $\{y=b\}$. Moreover, by Proposition \ref{resultat_conjunts}, $f_{|\Ima f}=\Ide$ and thus, $f$ has the general shape shown in Figure \ref{Figura_grafica_f}. Visually, it is clear that in general these functions cannot be differentiable. Formally, using the limit definition of the derivative at $a$ and $b$, the following result is straightforward.

\begin{figure}[htbp]
    \centering
        {\includegraphics[width=0.81\textwidth]{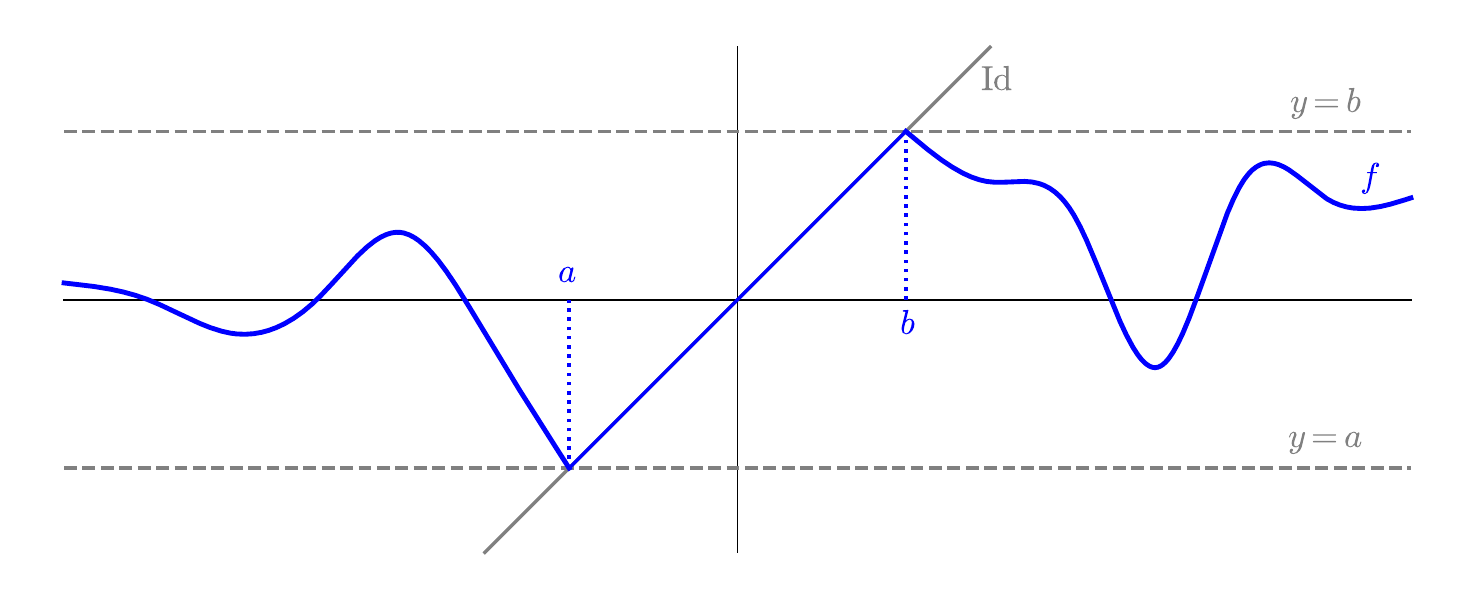}}
    \caption{Graph of a generic idempotent continuous function in $\mathbb R$.}
    \label{Figura_grafica_f}
\end{figure}

\begin{proposition}
Let $f:I \rightarrow  I$ be an idempotent differentiable function, then  $f$ is constant
or the identity.
\label{idem_dif}
\end{proposition}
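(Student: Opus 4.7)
The plan is to use Proposition \ref{resultat_conjunts} (applied with $n=2$, $k=1$) to reduce the statement to a sharp one-sided derivative comparison at the endpoints of $\Ima f$. By that proposition, $f_{|\Ima f}=\Ide$, so $f(x)=x$ on $\Ima f$; furthermore $\Ima f$ is a connected subset of $I$ (being the continuous image of a connected set), hence an interval.

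First I would dispose of the trivial cases: if $\Ima f$ is a singleton then $f$ is constant, and if $\Ima f=I$ then $f=\Ide$ on all of $I$. So assume for contradiction that $\Ima f$ is a non-degenerate sub-interval of $I$ that is strictly contained in $I$. Then $\Ima f$ has an endpoint $p$ that lies in the interior of $I$ (otherwise $\Ima f$ and $I$ would share both endpoints, forcing $\Ima f=I$). Without loss of generality, take $p$ to be the right endpoint of $\Ima f$; the other case is symmetric.

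Next I would compute the two one-sided derivatives of $f$ at $p$. Since $f(p)=p$ and $f$ is the identity on a left neighbourhood of $p$ inside $\Ima f$, for $x<p$ close enough to $p$ we have
\[
\frac{f(x)-f(p)}{x-p}=\frac{x-p}{x-p}=1,
\]
so the left derivative at $p$ equals $1$. On the other hand, for $x>p$ in $I$, $f(x)\in \Ima f$ forces $f(x)\le p=f(p)$, so
\[
\frac{f(x)-f(p)}{x-p}\le 0,
\]
and the right derivative at $p$ is $\le 0$. This contradicts the differentiability of $f$ at $p$, ruling out the assumption. Thus $\Ima f$ is either a point or all of $I$, and the conclusion follows.

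The argument is essentially a one-line derivative comparison once the structural reduction is made, so there is no real obstacle; the only thing to be careful about is ensuring that some endpoint of $\Ima f$ really is an interior point of $I$ (so both one-sided derivatives are defined and must agree), which is where the hypothesis that $\Ima f$ is a proper, non-degenerate subinterval is used.
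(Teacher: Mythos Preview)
Your proof is correct and follows essentially the same approach as the paper: the paper sets up the picture (that $\Ima f$ is a closed subinterval on which $f=\Ide$, with $f$ trapped between the horizontal lines $y=a$ and $y=b$) and then says the result is ``straightforward'' from the limit definition of the derivative at the endpoints $a$ and $b$; you have written out exactly that one-sided derivative comparison. The only point you use implicitly is that the relevant endpoint $p$ actually belongs to $\Ima f$ (so that $f(p)=p$), which follows from Remark~\ref{obs_retract} (closedness of $\Ima f$ in $I$) or directly from continuity and $f_{|\Ima f}=\Ide$.
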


For the functional equation $f^3=f$ all the previous arguments hold, except that $f_{|\Ima f}$ could also be an involution. Using Remark \ref{f'<0}, which states that involutions have strictly negative derivative, it is not difficult to prove:

\begin{proposition}
Let $f:I \rightarrow  I$ be a differentiable solution of $f^3=f$, then $f$ is constant,  the identity or an involution.
\label{idem_dif_2}
\end{proposition}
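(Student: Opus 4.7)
The plan is to use Proposition \ref{resultat_conjunts} to reduce the hypothesis $f^3=f$ to the statement that $f^2_{|\Ima f}=\Ide$, i.e.\ that $f_{|\Ima f}$ is either the identity or an involution. This splits the proof into two cases.

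In the first case, $f_{|\Ima f}=\Ide$. Since $f(x)\in\Ima f$ for every $x\in I$, this immediately yields $f^2=f$, and so Proposition \ref{idem_dif} forces $f$ to be constant or the identity. This case is essentially a direct appeal to the idempotent result already proved.

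The substantive case is when $f_{|\Ima f}$ is a genuine involution. Here I would first invoke Remark \ref{obs_retract} to know that $\Ima f$ is a closed subinterval $J$ of $I$ (closed since $I$ is Hausdorff, interval because $I$ is connected and $f$ continuous). By Proposition \ref{kerejarto_1dim_I} the restriction $f_{|J}$ is strictly decreasing, and by the existence of a continuous decreasing involution the interval $J$ must be symmetric in the appropriate sense (either $\mathbb R$ itself or of the form $[\alpha,\beta]$ with finite endpoints that the involution swaps). The goal is then to prove $J=I$; once this is shown, $f=f_{|J}$ is itself an involution and we are done.

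The heart of the argument, and the step I expect to require the most care, is ruling out that $J$ has an endpoint lying in the interior of $I$. Suppose $\beta\in\Int I$ is the right endpoint of $J$; then $\beta\in J$ because $J$ is closed in $I$, and $f(\beta)=\alpha$, the left endpoint of $J$. For $x\in I$ slightly larger than $\beta$ we still have $f(x)\in J$, hence $f(x)\ge\alpha=f(\beta)$; since $x-\beta>0$, the right-hand difference quotient at $\beta$ is $\ge 0$. On the other hand, the left-hand difference quotient at $\beta$ agrees with the one-sided derivative of $f_{|J}$ at its endpoint, which by Remark \ref{f'<0} is strictly negative. Differentiability of $f$ at $\beta$ would force these two to agree, contradiction. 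A symmetric argument excludes $\alpha\in\Int I$. Thus both endpoints of $J$ lie on the boundary of $I$ (the unbounded sub-cases are handled by observing that no continuous decreasing involution exists on a half-line like $(-\infty,\beta]$), and since $\alpha,\beta\in J\subseteq I$, this forces $I=[\alpha,\beta]=J$, concluding the proof.
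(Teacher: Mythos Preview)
Your proposal is correct and follows essentially the same approach as the paper. The paper only sketches the argument (``all the previous arguments hold, except that $f_{|\Ima f}$ could also be an involution; using Remark~\ref{f'<0} \ldots it is not difficult to prove''), and your write-up fleshes out precisely those details: reduce via Proposition~\ref{resultat_conjunts}, dispose of the case $f_{|\Ima f}=\Ide$ with Proposition~\ref{idem_dif}, and in the involution case compare the one-sided derivatives at an endpoint of $\Ima f$ lying in $\Int I$, using Remark~\ref{f'<0} for the strict negativity on the inside.
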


Now that we know the general shape of continuous idempotent functions, we would like to have a classification up to topological conjugacy. Unfortunately, we will see that there are many equivalence classes and such fact complicates having a simple classification.

A classical result of cardinal theory states that the set of continuous real value functions has the same cardinal as the real numbers, i. e. $|\{f:\mathbb R \rightarrow \mathbb R\hspace{1mm} : \hspace{1mm} f  \textnormal{ continuous}\}|= |\mathbb R|$ (see \cite[Chapter 1.5, Exercise 23]{cardinal_arithmetic}).
Thus, the set of idempotent continuous functions has at most the continuum cardinality. We will see that in fact, there are exactly $|\mathbb R|$ equivalence classes of idempotent continuous functions up to topological conjugacy.

 \begin{ex}
There exists a family of idempotent continuous functions not topologically conjugated with each other with cardinality $|\mathbb R|$.
 \label{idemp_exemple}
 \end{ex}
 \begin{proof}
Let $\{f_\lambda\}_{\lambda \in (0,1)}$ be the family of functions defined as follows. In $(-\infty,1]$ we have
\begin{equation}
f_\lambda (x) =
\left\{
	\begin{array}{ll}
		0  & \mbox{if } x \leq 0, \\
		x & \mbox{if } 0 < x \leq 1.
	\end{array}
\right.
\end{equation}
For all $m\in \mathbb N\setminus\{0\}$, $f_\lambda (m+1)$ takes the value of the $m$th decimal position of $\lambda$'s binary representation (unique with the convention that it does not end with 1 repeating). Finally, we define the value of $f_\lambda$ in $(1, \infty)\setminus \mathbb N$ in such a way that $f$ is continuous and $f((1,\infty)\setminus \mathbb N)\subset (0,1)$. For instance, $f$ can be a line in $(m,m+1)$ when $f_\lambda (m)\not = f_\lambda (m+1)$ and a parabola when $f_\lambda (m) = f_\lambda (m+1)$, as depicted in Figure \ref{Figura_grafica_f_lambda}. It is clear that all these functions are idempotent and continuous.

\begin{figure}[htbp]
    \centering
        {\includegraphics[width=0.835\textwidth]{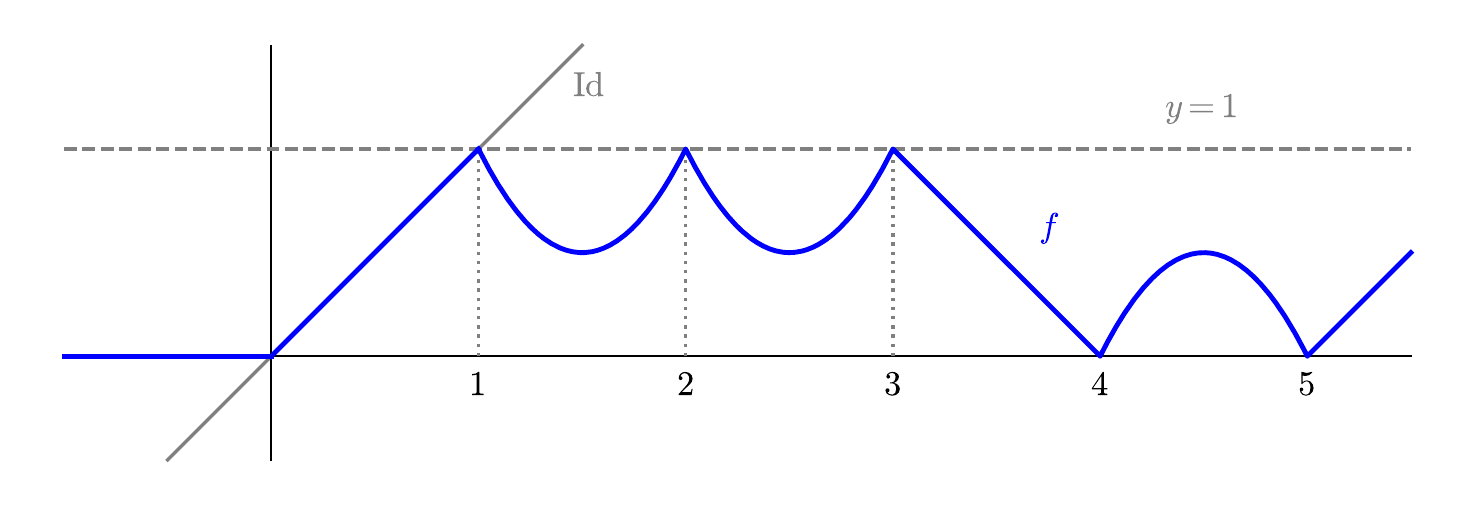}}
    \caption{Graph of the function $f_\lambda$ with $\lambda=(0.11001\dots)_2$.}
    \label{Figura_grafica_f_lambda}
\end{figure}

Let $\lambda, \mu \in (0,1)$ and  $\varphi$ a homeomorphism such that $\varphi \circ f_\lambda=f_\mu\circ \varphi$, we will show that $\lambda =\mu$.

Since $\varphi$ is a homeomorphism it sends fixed points of $f_\lambda$ to fixed points of $f_\mu$. Hence, $ \varphi([0,1])= [0,1]$ and in particular $\varphi(0) =0$ and $\varphi(1)=1$ or  $\varphi(0)=1$ and $\varphi(1)=0$.

If $\varphi(0)=1$ and $\varphi(1)=0$ then $\varphi$ is decreasing and we have
\[1=\varphi(0)=\varphi(f_\lambda ((-\infty,0]))= f_\mu ( \varphi((-\infty,0]))=f_\mu ([1,\infty))=[0,1],\]
which is a contradiction. Hence, $\varphi$ is an increasing homeomorphism with $\varphi(0)=0$ and  $\varphi(1)=1$; in particular, $\varphi ([1,\infty))=[1,\infty)$.
Moreover, we have $f_\mu (\varphi(m))=\varphi(f_\lambda (m))=f_\lambda(m)\in \{0,1\}$, so $f_\mu (\varphi(m))\in \{0,1\}$ for all $m\in \mathbb N $.
By definition of $f_\mu$, $[1,\infty)\cap f_{\mu}^{-1}(\{0,1\})\subset \mathbb N$, and thus $\varphi(m)\in \mathbb N$ for all $m$.

If we show that $\varphi(m) = m $ for all $m\in \mathbb N$, it will follow that $f_\lambda (m)=f_\mu(m)$ and $\lambda = \mu$.

We have already seen that $\varphi(0)=0$ and $\varphi(1)=1$, we prove the general case by induction. Assume that $\varphi(k)=k$ for all $k\leq m-1$, then $\varphi(m)\in \{m,m+1,\dots\} $ since $\varphi(m)\in \mathbb N$. Suppose for the sake of contradiction that $\varphi(m)>m$. Then, by continuity there is $x\in (m-1,m)$ such that $\varphi(x)=m$. Hence,
\[f_\lambda (x)=\varphi^{-1} (f_\mu (\varphi(x)))=f_\mu(m)\in \{0,1\},\]
since $\varphi$ fixes 0 and 1. But $x\not \in f_\lambda^{-1}(\{0,1\})= (-\infty,0]\cup \mathbb N$  which is a contradiction, and thus  $\varphi(m)=m$.
\end{proof}
Note that the functions defined above are solutions of $f^n =f^k$ for any $n,k$ with $n> k\geq 1$ (see Corollary \ref{f^{l_1(n-k)+k}}). Moreover, they are not linearizable, since linear solutions of \eqref{ f^n= f^k}  in $\mathbb R$ are $0$, $-\Ide$ and $\Ide$,  which  have either one or all fixed points.

\subsection{General case}
\label{sec_cas gen_dim 1}

Let $f:I \rightarrow I$ be a continuous solution of \eqref{ f^n= f^k}. By Remark \ref{obs_retract}, $\Ima f^k=J$ is an  interval closed in $I$ and by Proposition \ref{resultat_conjunts}, we have $f^{n-k}_{|J}=\Ide$. Hence, $f_{|J}$ is a periodic function and by Proposition \ref{kerejarto_1dim_I} it is the identity or an involution. Using the characterization given by Proposition \ref{resultat_conjunts}, we obtain the result below.

\begin{proposition}
Let $n,k\in \mathbb N\cup \{ 0\}$ with $n>k$ and $f:I\rightarrow I$ be continuous. If $n-k$ is odd, $f^n=f^k$ if and only if $f^{k+1}=f^k$. If $n-k$ is even, $f^n=f^k$ if and only if $f^{k+2}=f^k$.
\label{f^k+2=f^k or f^k+1=f^k 1dim}
\end{proposition}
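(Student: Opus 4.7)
The plan is to combine the set-theoretic characterization from Proposition \ref{resultat_conjunts} with the rigidity of periodic continuous maps on an interval from Proposition \ref{kerejarto_1dim_I}. The former rephrases $f^n=f^k$ as $f^{n-k}_{|\Ima f^k}=\Ide$; the latter collapses any periodic continuous self-map of an interval to the identity (odd period) or to an involution (even period). The result then falls out by translating these two alternatives back to the original equation.

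For the forward direction, set $J:=\Ima f^k$. As the continuous image of the connected set $I$ under $f^k$, $J$ is connected and hence an interval. The inclusion $\Ima f^{l+1}\subset\Ima f^l$ gives $f(J)\subset J$, so $f_{|J}:J\to J$ is a continuous self-map, and Proposition \ref{resultat_conjunts} provides $f^{n-k}_{|J}=\Ide$. If $J$ is a singleton, Remark \ref{obs_|Ima f^k|=1} already yields $f^{k+1}=f^k$, which fits both cases of the statement. Otherwise $J$ is a proper interval and Proposition \ref{kerejarto_1dim_I} applies to $f_{|J}$: when $n-k$ is odd it forces $f_{|J}=\Ide$, and when $n-k$ is even it forces $f^2_{|J}=\Ide$. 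Feeding these back into Proposition \ref{resultat_conjunts} yields $f^{k+1}=f^k$ and $f^{k+2}=f^k$ respectively.

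The reverse direction is immediate from Corollary \ref{f^{l_1(n-k)+k}}. Applied to the equation $f^{k+1}=f^k$ (exponent difference $1$), it gives $f^n=f^k$ for every $n\geq k$, covering the odd case; applied to $f^{k+2}=f^k$ (exponent difference $2$), it gives $f^n=f^k$ whenever $n-k$ is even. There is no real obstacle in the argument: all the depth is already packaged in Propositions \ref{resultat_conjunts} and \ref{kerejarto_1dim_I}, and the only subtlety is the degenerate case $|\Ima f^k|=1$, which is absorbed by Remark \ref{obs_|Ima f^k|=1}.
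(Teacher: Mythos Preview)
Your proof is correct and follows essentially the same route as the paper: reduce $f^n=f^k$ to $f^{n-k}_{|\Ima f^k}=\Ide$ via Proposition~\ref{resultat_conjunts}, invoke Proposition~\ref{kerejarto_1dim_I} on the interval $\Ima f^k$, and translate back. You are simply more explicit than the paper in separating out the singleton case via Remark~\ref{obs_|Ima f^k|=1} and in spelling out the converse via Corollary~\ref{f^{l_1(n-k)+k}}.
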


Now, Theorem \ref{theorem_A} follows from Proposition \ref{f^k+2=f^k or f^k+1=f^k 1dim}, \ref{idem_dif}, \ref{idem_dif_2} and \ref{kerejarto_1dim_I_invol}.

One can usually check visually if a ``well behaved'' continuous function  is a solution of $f^{k+1}=f^k$. First, $f$ must be the identity in an interval $J\subset I$ closed as a subset. Then we only need to verify that $\Ima f^k=J$. To do so, we can compute successively $f(I), f^2(I), \dots , f^k(I)$ by searching for the maximum and minimum of $f$ in the intervals $I,f(I),\dots, f^{k-1}(I)$. An analogous process can be done for $f^{k+2}=f^k$ with the only difference that $f$ can be an involution instead of the identity in $J$.

Despite this apparently simple structure, Example \ref{idemp_exemple} shows that one should not expect an easy classification up to topological conjugacy of continuous solutions of \eqref{ f^n= f^k} for any $n> k\geq 1$.  We might ask then if imposing more regularity to $f$ is enough to have a nice classification.  If $k=1$, Theorem \ref{theorem_A} answers affirmatively. If $k \geq 2$, the following example shows that there are  many smooth solutions of \eqref{ f^n= f^k}, which difficults such classification.

\begin{ex}
 There exists a family of smooth functions satisfying $f^3=f^2$ not topologically conjugated with each other with cardinality $|\mathbb R|$.
 \label{idemp_exem_smooth}
\end{ex}
\begin{proof}
Let $\{f_\lambda\}_{\lambda \in (0,1)}$ be the family of functions defined as follows. For all $ x\in[-1,0]$, $f_{\lambda}(x)=0$. In $(-\infty,-1]$, $f_\lambda$ is a strictly increasing smooth function that connects smoothly at $-1$ and when  $x\rightarrow -\infty$, $f_{\lambda}(x)\rightarrow -1$. Now, for all $m\in \mathbb N\setminus\{0\}$, $f_\lambda (m)$ takes minus the value of the $m$th decimal position of $\lambda$'s binary representation (unique with the convention that it does not end with 1 repeating). We define $f_\lambda$ everywhere else with smooth transition functions (see  \cite[Page 33]{spivak_smooth}) in such a way that $f_\lambda \in \mathcal C^\infty$ and  $f((1,\infty)\setminus \mathbb N)\subset (-1,0)$, see Figure  \ref{Figura_grafica_f_lambda_smooth}. A simple computation yields $\Ima f= [-1,0]$, $\Ima f^2=\{0\}$ and $f(0)=0$, hence $f^3=f^2$.

\begin{figure}[htbp]
    \centering
        {\includegraphics[width=0.875\textwidth]{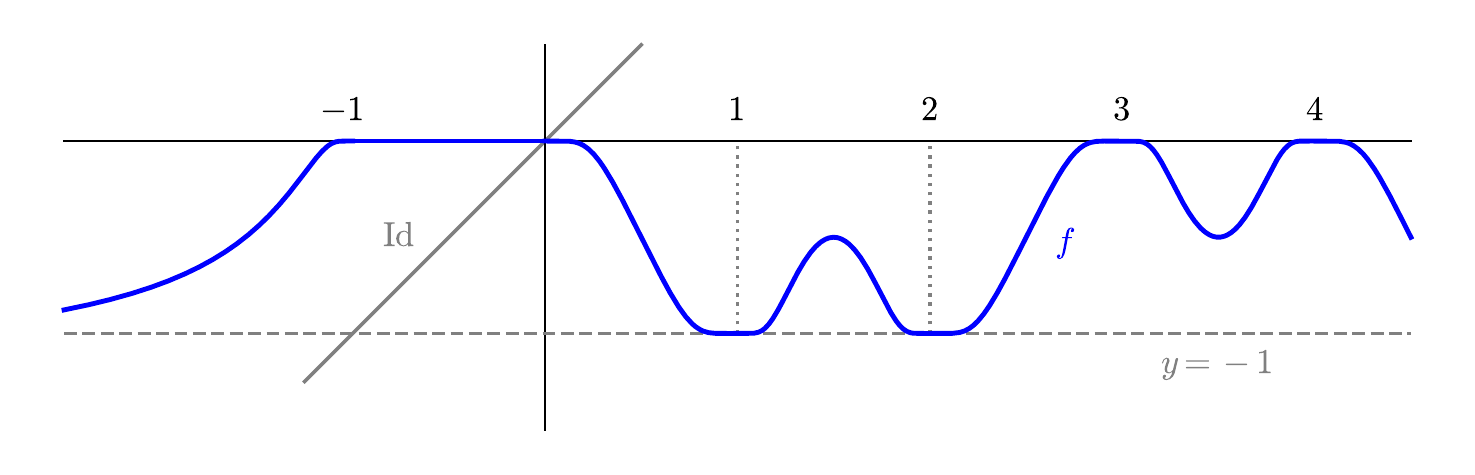}}
    \caption{Graph of the function $f_\lambda$ with $\lambda=(0.11001\dots)_2$.}
    \label{Figura_grafica_f_lambda_smooth}
\end{figure}

Now, if $\varphi$ conjugates $f_\lambda$ with $f_\mu$, then we must have $\varphi([-1,0])=[-1,0]$ as it is the only maximal proper interval whose image under $f_\lambda$ or $f_\mu$ is a point. Then, either  $\varphi(-1)=-1$ and $\varphi(0)=0$ or $\varphi(-1)=0$ and $\varphi(0)=-1$. In the latter case, $\varphi$ is decreasing so $\varphi ([0,\infty))=(-\infty,-1]$, and thus,
\[\varphi([-1,0])=\varphi(f_\lambda ([0,\infty)))= f_\mu ( \varphi([0,\infty)))=f_\mu ((-\infty,-1])=(-1,0],\]
which is a contradiction as the left-hand side is compact and the right-hand side is not.  Hence, $\varphi(-1)=-1$ and $\varphi (0)=0$. Now, exactly the same argument as in Example \ref{idemp_exemple} proves that $\lambda =\mu$.
\end{proof}

As in Example \ref{idemp_exemple}, the functions defined above are solutions of $f^n =f^k$ for any $n,k$ with $n> k\geq 2$ and are not linearizable.

Now we want to tackle the analytic case. We will need the following technical results to do so.

\begin{notation}
Given $a,b\in \mathbb R\cup \{\pm\infty\}$ we will denote by $|a,b|$ any type of interval between $a$ and $b$ which is a subset of the real numbers.

Given an interval $|a,b|$ we will say that $ L$ is a lateral neighborhood of $|a,b|$ if $|a,b|\subset L$ and $L=|c,d)$ or $L=(c,d|$.

Given $A,B\subset \mathbb R$, $a\in \mathbb R$ and $f:A\rightarrow A$, we will say that $f_{|B}\leq a$ if and only if $\forall x\in A\cap B$, $f(x)\leq a$.
\end{notation}

\begin{lemma}
Let $f:|a,b|\rightarrow |a,b|$ be a continuous function such that $f^{k+1}=f^k$ for some $k\in \mathbb N\cup \{ 0\}$ and denote $|c,d|=\Ima f^k$.
Then, there exists $\epsilon>0$ such that $f_{|[d,d+\epsilon)}\leq d$ and $f_{|(c-\epsilon,c]}\geq c$.
\label{no_surt_per_dalt}
\end{lemma}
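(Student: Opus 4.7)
I will prove only the first inequality $f_{|[d,d+\epsilon)}\leq d$; the bound near $c$ follows by the symmetric argument with infima in place of suprema.

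If $d\notin|a,b|$, convexity of the interval $|a,b|$ together with $|c,d|\subset|a,b|$ forces $d=\sup|a,b|$ to be excluded, so $[d,d+\epsilon)\cap|a,b|=\emptyset$ and the statement is vacuous. Assume therefore $d\in|a,b|$. By Remark~\ref{obs_retract}, $|c,d|$ is closed in $|a,b|$; since $d$ lies in the closure of $|c,d|$ and in $|a,b|$, this forces $d\in|c,d|$, hence $f(d)=d$.

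The plan is to track the decreasing suprema of the iterated images. For $j=0,\dots,k$, set $\gamma_j:=\sup\Ima f^j$. Each $\Ima f^j$ is a continuous image of the interval $|a,b|$, hence itself an interval, and the nesting $|a,b|=\Ima f^0\supset\Ima f^1\supset\cdots\supset\Ima f^k=|c,d|$ gives $\gamma_0\geq\gamma_1\geq\cdots\geq\gamma_k=d$. Let $m:=\min\{j\in\{0,\dots,k\}:\gamma_j=d\}$.

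If $m=0$, then $|a,b|\subseteq(-\infty,d]$, so $[d,d+\epsilon)\cap|a,b|\subseteq\{d\}$ and any positive $\epsilon$ works, since $f(d)=d$. If $m\geq 1$, minimality of $m$ gives $\gamma_{m-1}>d$; because $\Ima f^{m-1}$ is an interval that contains $d$ (it contains $|c,d|$) and has supremum strictly above $d$, we obtain $[d,\gamma_{m-1})\subset\Ima f^{m-1}$. Since $f(\Ima f^{m-1})=\Ima f^m$ and $\sup\Ima f^m=d$, we have $\Ima f^m\subset(-\infty,d]$, so $f(x)\leq d$ for every $x\in[d,\gamma_{m-1})$. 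Take $\epsilon_d:=\gamma_{m-1}-d$ (or any positive number if $\gamma_{m-1}=+\infty$). The symmetric argument with infima yields $\epsilon_c>0$ with $f_{|(c-\epsilon_c,c]}\geq c$, and $\epsilon:=\min(\epsilon_c,\epsilon_d)$ finishes the proof.

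The only non-obvious step is picking the right gadget: once one tracks the suprema $\gamma_j$ of the iterated images and isolates the pre-stabilization index $m-1$, the interval $\Ima f^{m-1}$ automatically reaches just far enough above $d$ to furnish the required neighborhood, with image confined by construction to $(-\infty,d]$.
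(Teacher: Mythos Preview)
Your proof is correct, and it takes a genuinely different route from the paper's. The paper argues by contradiction: assuming $f$ sends every right-neighborhood $[d,d+\epsilon_0)$ above $d$, one uses $f(d)=d$ together with the intermediate value theorem to show that $f([d,d+\epsilon_0))$ again contains some $[d,d+\epsilon_1)$, and after $k$ iterations this forces $f^k([d,b))$ to contain points strictly above $d$, contradicting $\Ima f^k=|c,d|$. Your argument is instead constructive: you track the suprema $\gamma_j=\sup\Ima f^j$ down the nested chain of images, locate the first index $m$ at which $\gamma_m=d$, and observe that $\Ima f^{m-1}$ already contains the interval $[d,\gamma_{m-1})$ while $f$ maps it into $\Ima f^m\subset(-\infty,d]$. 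This yields the explicit value $\epsilon_d=\gamma_{m-1}-d$, which is a small bonus the contradiction argument does not provide; on the other hand, the paper's iterated-IVT idea is precisely the mechanism reused in Lemma~\ref{forma_local_unidimensional} when $f_{|\Ima f^k}$ is an involution rather than the identity, so it dovetails more naturally with the next step.
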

\begin{proof}
If $b=d$, then clearly the statement concerning $d$ is true. Otherwise we prove it by contradiction. Assume that there does not exist a constant $\epsilon_0>0$ such that $f_{|[d,d+\epsilon_0)}\leq d$. Then, for all $\epsilon_0>0$ there exists  $\epsilon_1>0$ such that $[d,d+\epsilon_1)\subset f( [d,d+\epsilon_0)) $. Applying this repeatedly, it is clear that there is a constant $\delta >0$ such that $[d,d+\delta )\subset f^k([d,b))$ but $\Ima f^k =|c,d|$ and we get a contradiction. We can proceed in the same manner to see the statement concerning $c$.
\end{proof}

\begin{lemma}
Let $f:|a,b|\rightarrow |a,b|$ be a continuous function satisfying  \eqref{ f^n= f^k} and we denote $|c,d|=\Ima f^k$. Then, there exists a lateral neighborhood $L$ of $\Ima f^k $ such that $f(L)= \Ima f^k$. Moreover, if $a=c$,  $b=d$ or, $f^{k+1}=f^k$ and $c\not =d$, we can choose $L$ to be a neighborhood of $\Ima f^k$.
\label{forma_local_unidimensional}
\end{lemma}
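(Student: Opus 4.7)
The approach combines two ingredients: Proposition \ref{resultat_conjunts} tells me that $f_{|\Ima f^k}$ is a bijection with $f^{n-k}_{|\Ima f^k} = \Ide$, so by Proposition \ref{kerejarto_1dim_I} it is either the identity or a decreasing involution; and the nested sequence $|a,b| \supseteq \Ima f \supseteq \Ima f^2 \supseteq \cdots \supseteq \Ima f^k = |c,d|$ of intervals in $|a,b|$ must stabilise at some step.

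For the first assertion I would dispose of the trivial case $|c,d| = |a,b|$ by setting $L := |a,b|$, whereupon $f(L) = \Ima f = \Ima f^k$ since $f_{|\Ima f^k}$ is surjective onto $\Ima f^k$. Otherwise, let $j_0 \in \{1,\ldots,k\}$ be the smallest index with $\Ima f^{j_0} = |c,d|$; by minimality $\Ima f^{j_0-1} \supsetneq |c,d|$. Because $\Ima f^{j_0-1}$ is the continuous image of the interval $|a,b|$ it is itself an interval, and strictly containing $|c,d|$ forces it to extend past $|c,d|$ on at least one side. I would then choose $L$ to be a one-sided open extension of $|c,d|$ sitting inside $\Ima f^{j_0-1}$ (of the form $|c,d| \cup [d, d+\delta)$ or $(c-\delta, c] \cup |c,d|$, with appropriate endpoint conventions); this $L$ is a lateral neighborhood and satisfies $f(L) \subseteq f(\Ima f^{j_0-1}) = |c,d|$, while $f(L) \supseteq f(|c,d|) = |c,d|$ gives equality.

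For the \emph{Moreover} part I would treat the three hypotheses in turn. When $a = c$ (respectively $b = d$), the set $|c,d|$ already abuts the left (right) boundary of $|a,b|$, and since $\Ima f^{j_0-1} \subseteq |a,b|$ cannot extend past $a$ (resp.\ $b$), the lateral extension from the previous paragraph is forced onto the only free side, hence is automatically a neighborhood of $|c,d|$ in $|a,b|$. When $f^{k+1} = f^k$ and $c \neq d$, the map $f$ fixes every point of $|c,d|$; Lemma \ref{no_surt_per_dalt} furnishes an $\epsilon_0 > 0$ with $f(x) \leq d$ for $x \in [d, d+\epsilon_0)$ and $f(x) \geq c$ for $x \in (c-\epsilon_0, c]$. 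Using continuity of $f$ at $c$ and $d$ together with $c < d$, I would shrink $\epsilon$ so that $f([d, d+\epsilon)) \subseteq [c,d]$ and $f((c-\epsilon, c]) \subseteq [c,d]$; then $L := (c-\epsilon, d+\epsilon) \cap |a,b|$ is a genuine two-sided neighborhood of $|c,d|$ with $f(L) = |c,d|$.

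The step I expect to be most error-prone is the bracket bookkeeping: verifying that the specific $L$ extracted from $\Ima f^{j_0-1}$ really fits the paper's formal shape $|c',d')$ or $(c',d'|$, and that $|c,d|$ contains $c$ and $d$ whenever they lie in the interior of $|a,b|$ (a consequence of $\Ima f^k$ being closed in $|a,b|$ by Remark \ref{obs_retract}). I would handle this case by case according to whether $a < c$ or $a = c$, and whether $b > d$ or $b = d$, cross-checking that the lateral neighborhood I write down matches the form demanded by the definition.
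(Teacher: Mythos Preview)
Your proof is correct, and your route to the first assertion is genuinely different from the paper's. The paper splits into cases according to whether $f_{|\Ima f^k}$ is the identity or an involution: in the identity case it uses Lemma~\ref{no_surt_per_dalt} together with continuity at $c$ and $d$ to build a two-sided neighborhood directly; in the involution case it argues by contradiction, showing that if neither one-sided extension of $[c,d]$ maps into $[c,d]$ then $f^2$ would violate Lemma~\ref{no_surt_per_dalt}. Your stabilisation argument---pick the minimal $j_0$ with $\Ima f^{j_0}=|c,d|$ and carve $L$ out of the strictly larger interval $\Ima f^{j_0-1}$---bypasses the involution-case contradiction entirely and does not invoke Lemma~\ref{no_surt_per_dalt} at all for the lateral claim. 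This is shorter and treats both cases uniformly; the price is that it does not automatically upgrade to a two-sided neighborhood when $f^{k+1}=f^k$, so for that \emph{Moreover} clause you fall back on Lemma~\ref{no_surt_per_dalt} and continuity exactly as the paper does. Your handling of the $a=c$ and $b=d$ cases---reusing the $j_0$ argument and observing that $\Ima f^{j_0-1}$ is forced to extend on the free side only, so the lateral neighborhood is already open in $|a,b|$---is also more explicit than the paper's ``can be seen similarly''.
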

\begin{proof}

We will assume $a\not =c$, $b\not =d$ and $c\not =d$, but these cases can be seen similarly. Assuming this, by Remark \ref{obs_retract} we have  $|c,d|=[c,d]$.

We first consider the case $f^{k+1}=f^k$.
 By Lemma  \ref{no_surt_per_dalt} there exists $\epsilon >0$ such that $f_{|[d,d+\epsilon)}\leq d$ and $f_{|(c-\epsilon,c]}\geq c$. Recalling Proposition \ref{resultat_conjunts}, $f_{|[c,d]}=\Ide$, then by continuity we have $f_{|[d,d+\epsilon)}\geq c$ and  $f_{|(c-\epsilon,c]}\leq d$ if $\epsilon$ is small enough. Thus, $L=(c-\epsilon,d+\epsilon)$ is the desired neighborhood.

If $f^{k+1}\not = f^k$, by Proposition \ref{f^k+2=f^k or f^k+1=f^k 1dim} $f^{k+2}=f^k$ and by Proposition \ref{resultat_conjunts} $f_{|[c,d]}$ is an involution. Thus, $f(c)=d$, $f(d)=c$ and by continuity there exists $\epsilon>0$ such that $f_{|[d,d+\epsilon)}\leq d$ and  $f_{|(c-\epsilon,c]}\geq c$. Assume for the sake of contradiction that there does not exist a lateral neighborhood $L$ such that $f(L)=\Ima f^k$. Then, for all $\epsilon>0$ there exists $\delta>0$ such that  $(c-\delta ,c]\subset f([d,d+\epsilon))$ and $[d,d+\delta)\subset f((c-\epsilon,c])$. Hence, for all $\epsilon>0$ there exists $\delta>0$ such that $[d,d+\delta)\subset f^2([d,d+\epsilon))$, which is in contradiction with Lemma \ref{no_surt_per_dalt} applied to $f^2$.
\end{proof}

\begin{proposition}
Let $f:I\rightarrow I$ be a non-periodic differentiable solution of \eqref{ f^n= f^k}. Then, $f^k$ is constant.
\label{diferenciable_dim1}
\end{proposition}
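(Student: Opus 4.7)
The plan is to argue by contradiction: suppose $J:=\Ima f^k$ is not a singleton. By Remark \ref{obs_retract}, $J$ is a closed subinterval of $I$ with at least two points. By Proposition \ref{f^k+2=f^k or f^k+1=f^k 1dim}, either $f^{k+1}=f^k$ or $f^{k+2}=f^k$, according to the parity of $n-k$.

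First I would reduce to the idempotent-on-image case $f^{k+1}=f^k$. If instead $f^{k+2}=f^k$, consider $g:=f^2$: it is differentiable, still non-periodic (for $g^r=\Ide$ would yield $f^{2r}=\Ide$), and
\[g^{k+1}=f^{2k+2}=f^{k+2}\circ f^k=f^k\circ f^k=g^k.\]
Once the idempotent case of the proposition is settled, applied to $g$ with exponent $k$ it gives $g^k=f^{2k}$ constant. But by Proposition \ref{resultat_conjunts} the map $f^k_{|J}:J\to J$ is a bijection, so $f^{2k}(J)=f^k(J)=J$; hence $J$ is a single point, against our standing assumption. It therefore suffices to treat the case $f^{k+1}=f^k$.

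Under this hypothesis, Proposition \ref{resultat_conjunts} gives $f_{|J}=\Ide$. If $J=I$, then $f=\Ide$, contradicting non-periodicity; hence $J\subsetneq I$ and, since $J$ is closed in $I$, at least one endpoint of $J$, call it $d$, belongs to $J$ and lies in the topological interior of $I$. On one side, $f(x)=x$ on $J$ gives the left-sided derivative $f'(d^-)=1$. On the other, Lemma \ref{no_surt_per_dalt} provides $\epsilon>0$ with $f(x)\leq d=f(d)$ for $x\in[d,d+\epsilon)$, so $f'(d^+)\leq 0$. Differentiability of $f$ at $d$ forces these one-sided values to coincide, yielding $1\leq 0$, a contradiction.

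The main obstacle is avoiding the involution configuration $f_{|J}$ directly: there, a naive sign argument only gives $f'(d^-)\leq 0$ and $f'(d^+)\geq 0$, both compatible at the common value $0$, so excluding $f'(d)=0$ would require an additional chain-rule computation at the endpoints of $J$ combined with the finiteness of $f'$ there. Passing to $g=f^2$ circumvents this entirely and reduces the problem to the clean idempotent configuration, where the slopes $1$ and $\leq 0$ cannot agree.
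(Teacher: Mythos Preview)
Your proof is correct and takes a genuinely different route from the paper. The paper handles both the idempotent configuration ($f^{k+1}=f^k$) and the involution configuration ($f^{k+2}=f^k$, $f^{k+1}\neq f^k$) directly, invoking Lemma~\ref{forma_local_unidimensional} in each case; for the involution case it combines this with Remark~\ref{f'<0}, which already gives the \emph{strict} inequality $f'<0$ on $J$, so your worry about the common value $0$ does not actually arise there. You instead reduce the involution case to the idempotent one by passing to $g=f^2$ (checking $g^{k+1}=g^k$, $g$ non-periodic, and $\Ima g^k=J$), and then in the idempotent case you only need the simpler Lemma~\ref{no_surt_per_dalt} rather than the more delicate Lemma~\ref{forma_local_unidimensional}. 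Your squaring trick is clean and makes the endpoint contradiction ($1$ versus $\leq 0$) completely transparent; the paper's approach is more uniform across the two cases but leans on a finer local-shape lemma. One small presentational point: you tacitly treat the interior endpoint as the right endpoint of $J$; the case where it is the left endpoint is handled symmetrically by the other clause of Lemma~\ref{no_surt_per_dalt}.
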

\begin{proof} First, note that  $\Ima f^k\subsetneq I$ since otherwise $f$ would be surjective and hence periodic by Remark \ref{exaustiva}. Assume  for the sake of contradiction that $\Ima f^k=|c,d|$ with $c\not =d$.
If  $f^{k+1}=f^k$, $f$ has the shape shown in Figure \ref{Figura_grafica_f} in a neighborhood of $\Ima f^k$ by Lemma \ref{forma_local_unidimensional}. Then,  it is easy to see that $f$ cannot be differentiable at both $c$ and $d$. Otherwise by Proposition \ref{f^k+2=f^k or f^k+1=f^k 1dim} we have $f^{k+2}=f^k$ and by Proposition \ref{resultat_conjunts}, $f_{||c,d|}$ is an involution. By Remark \ref{f'<0}, involutions have strictly negative derivative and  using Lemma \ref{forma_local_unidimensional} it is easy to see that $f$ cannot be differentible at both $c$ and $d$.
\end{proof}

We are now prepared to give a classification of the analytic solutions of \eqref{ f^n= f^k}. In the non-periodic case we have by Proposition  \ref{diferenciable_dim1} and Lemma  \ref{forma_local_unidimensional} that $f$ must be constant in a proper interval, and since $f$ is analytic it must be constant.

\begin{corollary}
Let $f:I\rightarrow I$ be an non-periodic analytic function satisfying \eqref{ f^n= f^k}. Then, $f$ is constant.
\label{anlitic_dim1}
\end{corollary}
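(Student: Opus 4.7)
The plan is to bootstrap the two preceding results. Since $f$ is analytic it is in particular differentiable, and since $f$ is assumed non-periodic, Proposition \ref{diferenciable_dim1} applies and tells us that $f^k$ is constant. In other words $\Ima f^k=\{c\}$ for some $c\in I$, so the ``interval'' $|c,d|$ of Lemma \ref{forma_local_unidimensional} degenerates to the single point $\{c\}$.

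Next I would invoke Lemma \ref{forma_local_unidimensional} in this degenerate case: there exists a lateral neighborhood $L$ of $\{c\}$ with $f(L)=\Ima f^k=\{c\}$. Since a lateral neighborhood of a point contains a non-degenerate interval (its definition forces $L$ to be of the form $[c,c+\varepsilon)$ or $(c-\varepsilon,c]$), the restriction $f_{|L}$ is identically equal to the constant $c$ on an interval of positive length.

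Finally, I would close the argument by analytic continuation. The domain $I$ is a connected interval and $f$ is real-analytic on $I$; the function $f-c$ is therefore real-analytic and vanishes on the open set $\Int(L)\neq \emptyset$. By the identity theorem for analytic functions on a connected domain, $f-c\equiv 0$ on all of $I$, so $f$ is constant.

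The argument is essentially a two-line assembly, and I do not expect any genuine obstacle: the only point that needs a quick verification is that Lemma \ref{forma_local_unidimensional} is indeed applicable when $\Ima f^k$ is a single point (the hypothesis ``$c=d$'' case), so that the neighborhood $L$ is guaranteed to contain a subinterval on which $f$ is literally constant rather than, say, merely taking values in a one-point set via some more complicated identification.
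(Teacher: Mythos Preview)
Your proposal is correct and follows essentially the same approach as the paper: invoke Proposition~\ref{diferenciable_dim1} to reduce $\Ima f^k$ to a point, apply Lemma~\ref{forma_local_unidimensional} to obtain a nondegenerate interval on which $f$ is constant, and finish by analytic continuation. Your caveat about the degenerate case $c=d$ is apt---the paper's proof of Lemma~\ref{forma_local_unidimensional} explicitly assumes $c\neq d$ and only asserts that the remaining cases ``can be seen similarly''---but the paper treats this as unproblematic and so does your argument.
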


Theorem \ref{theorem_B} follows from Proposition \ref{kerejarto_1dim_I}, \ref{kerejarto_1dim_I_invol} and the previous corollary.

\subsection{On the circle}
\label{on_circle}

The next proposition shows us that the study done for continuous functions $f:I\rightarrow I$ can be applied to functions $f:\mathbb S^1\rightarrow \mathbb S^1$.

\begin{remark}
Any closed arch of $\mathbb S^1$ (where $\mathbb S^1$ is not considered an arch) is diffeomorphic to $[0,1]$. Thus, to study solutions of \eqref{ f^n= f^k} in a closed arch is equivalent to study the solutions in $[0,1]$.
\label{obs_arch}
\end{remark}

\begin{proposition}
Let $f:\mathbb S^1\rightarrow \mathbb S^1$ be a non-periodic continuous function. Then, $f^n=f^k$ for some $n>k\geq 1$ if and only if $\Ima f=J$ is a closed arch and $f_{|J}^{n-1}=f^{k-1}_{|J}$. Moreover, given a closed arch $J$, any continuous function $f: J\rightarrow J $ solution of $f^n=f^k$ with $n>k\geq 1$ can be extended to $\mathbb S^1$ satisfying the same relation.
\label{non_per_S}
\end{proposition}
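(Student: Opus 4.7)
The plan is to reduce the equation $f^n=f^k$ on $\mathbb S^1$ to the equation $f_{|J}^{n-1}=f_{|J}^{k-1}$ on $J:=\Ima f$ by means of the elementary identity
\[
f^m(x)=f_{|J}^{m-1}\bigl(f(x)\bigr),\qquad m\geq 1,
\]
which is legitimate because $f(\mathbb S^1)\subseteq J$ and $f(J)=\Ima f^2\subseteq J$, so $f_{|J}:J\to J$ is a self-map. For the extension half, I will construct $\tilde f$ so that its image outside $J$ lands inside $g(J)$, where the required identity holds automatically.

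\textbf{The two implications.} For the forward direction, $f$ non-periodic together with Remark \ref{exaustiva} forces $f$ to be non-surjective, so $J\subsetneq \mathbb S^1$. Being the continuous image of the compact connected set $\mathbb S^1$, $J$ is compact and connected, hence a closed arch (or a single point in the trivial constant case, which can be absorbed as a degenerate arch). The identity above rewrites $f^n=f^k$ as $f_{|J}^{n-1}(y)=f_{|J}^{k-1}(y)$ for $y=f(x)$; as $x$ ranges over $\mathbb S^1$, $y$ ranges over $\Ima f=J$, giving exactly $f_{|J}^{n-1}=f_{|J}^{k-1}$. The converse is the same identity read right-to-left and produces $f^n=f^k$ on all of $\mathbb S^1$.

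\textbf{Extension.} Given a closed arch $J\subset\mathbb S^1$ with endpoints $a,b$ and a continuous $g:J\to J$ with $g^n=g^k$, the crucial observation is that $g^{n-1}=g^{k-1}$ holds automatically on $g(J)$: if $y=g(z)$ with $z\in J$, then $g^{n-1}(y)=g^n(z)=g^k(z)=g^{k-1}(y)$. I would then set $\tilde f=g$ on $J$ and extend $\tilde f$ across the complementary open arc of $\mathbb S^1\setminus J$ by any continuous path in $g(J)$ joining $g(a)$ and $g(b)$; such a path exists because $g(J)$ is compact, connected and therefore path-connected. Since $\tilde f(\mathbb S^1\setminus J)\subseteq g(J)\subseteq J$, the computation $\tilde f^n(x)=g^{n-1}(\tilde f(x))=g^{k-1}(\tilde f(x))=\tilde f^k(x)$ for $x\notin J$ closes the argument, while on $J$ the equation is $g^n=g^k$ by hypothesis.

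\textbf{Main obstacle.} I do not foresee a substantial difficulty: the whole proof rests on the reduction identity plus a simple path-connectedness statement about $g(J)$. The only delicate points are the degenerate situation where $\Ima f$ collapses to a single point (handled by a constant extension) and ensuring that $\tilde f$ glues continuously to $g$ at the endpoints $a,b$, which is automatic once the connecting path is required to begin at $g(a)$ and end at $g(b)$.
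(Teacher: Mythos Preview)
Your argument is correct. The reduction identity $f^m=f_{|J}^{m-1}\circ f$ handles the if-and-only-if statement exactly as the paper intends (the paper merely asserts ``it is not difficult to verify'' at this point, so you have in fact given more detail than the original).

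The extension construction is where the two proofs diverge. The paper normalises $J$ to a semicircle and defines the extension by $F_{|\mathbb S^1\setminus J}=f\circ R$, where $R$ is the reflection across the diameter bounding $J$. This has the advantage that for $x\notin J$ one gets $F^m(x)=f^m(R(x))$ directly, so $F^n=F^k$ follows immediately from the hypothesis $f^n=f^k$ on $J$, with no need for the intermediate observation that $g^{n-1}=g^{k-1}$ on $g(J)$. Your route---extending by an arbitrary continuous path inside $g(J)$---is equally valid and in some sense more flexible, but it requires that extra lemma about $g(J)$ and the path-connectedness check (both of which you supply). The paper's reflection trick is shorter; yours makes the underlying mechanism (iterates land where the reduced equation already holds) more transparent.

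One small remark: by the paper's convention a closed arch is diffeomorphic to $[0,1]$, so a single point is not a closed arch. Your parenthetical about the constant case is therefore patching a genuine edge case that the proposition's wording does not literally cover; this is harmless but worth being aware of.
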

\begin{proof}
Since $f$ is continuous, and  $\mathbb S^1$ is connected and compact, $\Ima f$ is connected and compact. Moreover, $f$ cannot be surjective by Remark \ref{exaustiva}. Thus, $\Ima f$ must be a closed arch. Using this fact it is not difficult to verify the if and only if statement.

For the second part we may assume without loss of generality that $J$ is a semicircle delimited by the $x$--axis. Then the desired extension can be defined as $F_{|J}=f$ and $F_{|\mathbb S^1\setminus J}=f\circ R$ where $R$ is the reflection through the $x$--axis.
\end{proof}

By the previous proposition, given $f:\mathbb S^1\rightarrow\mathbb S^1$  a non-periodic continuous solution of \eqref{ f^n= f^k},  there exists an arch $K\supsetneq \Ima f$ and clearly $f_{|K}^n=f_{|K}^k$ with $f$ non-periodic in $K$.  Since $K$ can be viewed as an interval,  we get Proposition \ref{idem_dif}, \ref{idem_dif_2}, \ref{diferenciable_dim1} and Corollary \ref{anlitic_dim1} for non-periodic functions defined in $\mathbb S^1$.

\section{Two-dimensional case: Euclidean plane}

In this section we would like to classify up to conjugacy all continuous solutions of \eqref{ f^n= f^k} defined in $\mathbb R^2$. The periodic case is already solved by Kerékjártó's Theorem (see Theorem \ref{kerejarto_R2}), which states that all solutions are linearizable. However, the non-periodic case is much more complex. To see this, recall from Example \ref{idemp_exemple} that there is a family of univariate idempotent continuous functions $\{f_\lambda\}_{\lambda \in (0,1)}$  such that any pair of distinct elements of it is not topologically conjugated to each other.
Then, the family $\{i_1\circ f_\lambda\circ p_1\}_{\lambda \in (0,1)}$  where $p_1$ is the first component projection and $i_1$ is the first component inclusion, has the same property and it is defined in the plane. Indeed,  given an homeomorphism $\varphi:\mathbb R^2\rightarrow \mathbb R^2 $ such that  $ \varphi\circ (i_1\circ f_\lambda\circ p_1) = ( i_1\circ f_\mu\circ p_1 )\circ \varphi $, one may check that  $\phi =p_1\circ \varphi \circ i_1:\Ima f_\lambda\rightarrow \Ima f_\mu $ is a homeomorphism which conjugates $f_\lambda $ with $f_\mu$. Moreover, it is clear that $\phi$  can be extended to a self-homeomorphism of the real line. So if $f_\lambda$ and $f_\mu$ are not conjugated neither are $i_1\circ f_\lambda\circ p_1$ and $i_1\circ f_\mu\circ p_1$.

\begin{remark}
The previous argument also works in higher dimensions, we only need to consider $p_1:\mathbb R^m\rightarrow \mathbb R$ and $i_1:\mathbb R\rightarrow \mathbb R^m$.
\label{exemple_cont_dim_n}
\end{remark}

Thus, we will need to impose more regularity to $f$ to get our desired classification. However, we should not expect a simple classification for solutions of \eqref{ f^n= f^k} when $k\geq 2$, since by Example \ref{idemp_exem_smooth} and the previous argument we will have uncountably many topologically non-equivalent smooth solutions.

First, we study the set $\Ima f^k$, which by Remark \ref{obs_retract} is a retract of the ambient space. It is well known
that a retract of a contractible space is contractible (see \cite[Page 366, Exercise 6]{Topology_Munkres})  and that a $\mathcal C^l $--retract of a  connected $\mathcal C^l$--manifold is a $\mathcal C^l$--submanifold if $l\in \{1,\dots,\infty\}$ (see \cite[Page 20, Exercise 2]{dif_top}).
Thus, we get the following results.

\begin{proposition}
Let $A$ be a contractible space, and $f:A\rightarrow A$ a continuous solution of \eqref{ f^n= f^k}, then $\Ima f^k$ is contractible.
\label{contractil_Imaf^k}
\end{proposition}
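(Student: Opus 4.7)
The plan is to reduce the statement to the standard topological fact that a retract of a contractible space is contractible, by exhibiting $\Ima f^k$ as a retract of $A$.

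First I would invoke Proposition \ref{h^2= h}, which produces an idempotent map $h = f^{k(n-k)}:A \to A$ with $\Ima h = \Ima f^k$. Since $f$ is continuous, so is any iterate of $f$, hence $h$ is continuous and idempotent; by the definition of retraction recalled just before the statement, $h$ is a retraction and $\Ima f^k = \Ima h$ is therefore a retract of $A$. (This is essentially the content of Remark \ref{obs_retract}, which is exactly why that remark was recorded earlier.)

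The second step is then purely topological: a retract $R$ of a contractible space $A$ is itself contractible. This is the result the authors cite from \cite[Page 366, Exercise 6]{Topology_Munkres}, so I would simply quote it. For completeness one may note the one-line argument: if $H:A\times[0,1]\to A$ is a contraction of $A$ to a point $a_0\in A$, and $r:A\to R$ is the retraction, then the composition
\[
\widetilde H : R\times[0,1] \xrightarrow{\;\iota\times \Ide\;} A\times [0,1] \xrightarrow{\;H\;} A \xrightarrow{\;r\;} R,
\]
where $\iota:R\hookrightarrow A$ is the inclusion, is a homotopy from $\Ide_R$ (since $r\circ \iota = \Ide_R$) to the constant map $r(a_0)$, so $R$ is contractible.

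There is no real obstacle here: the work has already been packaged into Proposition \ref{h^2= h} and into the standard fact about retracts, so the proof is essentially a two-line citation. The only thing to watch is that we apply the retract argument to $\Ima f^k$ using $h$ rather than trying to directly produce a retraction from $f$ itself (which need not be idempotent when $k\ge 2$).
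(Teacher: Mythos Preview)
Your proposal is correct and matches the paper's approach exactly: the paper derives the proposition directly from Remark \ref{obs_retract} (that $\Ima f^k$ is a retract of $A$) together with the cited fact from \cite[Page 366, Exercise 6]{Topology_Munkres} that a retract of a contractible space is contractible. Your unpacking of Remark \ref{obs_retract} via Proposition \ref{h^2= h} and the explicit homotopy are accurate elaborations of precisely those two ingredients.
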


\begin{proposition}
Let $M$ be a connected $\mathcal C^l$--manifold, $l\in \{1,2,\dots, \infty\}$ and $f:M\rightarrow M$ a $\mathcal C^l$--solution of \eqref{ f^n= f^k}. Then, $\Ima f^k$ is  a connected $\mathcal C^l$--submanifold of $M$.
\label{retracte_dif}
\end{proposition}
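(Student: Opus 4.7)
The plan is to reduce Proposition \ref{retracte_dif} to the two facts that the authors have just quoted (preservation of contractibility by retracts, and the result from Hirsch's \emph{Differential Topology} that $\mathcal{C}^l$--retracts of connected $\mathcal{C}^l$--manifolds are $\mathcal{C}^l$--submanifolds for $l\geq 1$). The only real work is to manufacture, from the $\mathcal{C}^l$--solution $f$ of \eqref{ f^n= f^k}, a genuine $\mathcal{C}^l$--retraction of $M$ whose image is exactly $\Ima f^k$.

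The natural candidate has already been singled out in Proposition \ref{h^2= h}: set $h := f^{k(n-k)}$. That proposition shows $h^2 = h$ and $\Ima h = \Ima f^k$. Because $f\in\mathcal{C}^l$ and compositions of $\mathcal{C}^l$--maps are $\mathcal{C}^l$, we have $h\in\mathcal{C}^l(M,M)$, so $h$ is a $\mathcal{C}^l$--retraction of $M$ onto $\Ima f^k$. Invoking the cited exercise from Hirsch, $\Ima h$ is therefore a $\mathcal{C}^l$--submanifold of $M$; since $\Ima h = \Ima f^k$, the submanifold conclusion follows.

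For connectedness, I would simply note that $f^k\colon M \to M$ is continuous and $M$ is connected by hypothesis, so the image $\Ima f^k$ is connected as the continuous image of a connected space.

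I do not foresee a serious obstacle here; the proof is essentially assembly of previously established facts. The one point worth checking carefully is that the quoted retract theorem genuinely applies in our setting, i.e.\ with no compactness, orientability, or dimensional hypothesis on $M$ beyond connectedness and $\mathcal{C}^l$--structure, and with no finite-codimension assumption on the retract. Hirsch's statement covers exactly this case, so the application is legitimate, and the rest of the argument is a one-line appeal to Proposition \ref{h^2= h}.
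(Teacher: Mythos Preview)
Your proposal is correct and follows exactly the paper's approach: the paper states the two cited facts (retracts of contractible spaces and Hirsch's $\mathcal C^l$--retract result) and then simply says ``Thus, we get the following results,'' leaving the reader to supply precisely the assembly you wrote out---namely, that $h=f^{k(n-k)}$ from Proposition~\ref{h^2= h} is a $\mathcal C^l$--retraction with image $\Ima f^k$, so Hirsch applies, and connectedness is immediate from continuity. You have essentially written the proof the paper omits.
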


\begin{remark}
In the above proposition, $\dim( M)=\dim (\Ima f^k)$ if and only if $f$ is periodic. Indeed, if $\dim( M)=\dim (\Ima f^k)$ then every point in $\Ima f^k$ has an open neighborhood $U\subset \Ima f^k$ given by its charts, and thus $\Ima f^k$ is open. By
Remark \ref{obs_retract}, $\Ima f^k$ is closed, hence $\Ima f^k=M$ and by Remark \ref{exaustiva}, $f$ is periodic.

\label{dim(M)=dim (Ima f^k)}
\end{remark}

In Proposition \ref{cond_retract} we will state a characterization for  submanifolds of $\mathbb R^m$ which are retracts. From now on, $\{\ast\}$ will denote a singleton.

\begin{proposition}
Let $M$ be a contractible two dimensional $\mathcal C^l$--manifold for some $l\in \{1,\dots,\infty\}$ and let $f:M\rightarrow M$ be a non-periodic $\mathcal C^l$--solution of \eqref{ f^n= f^k}. Then, $\Ima f^k$ is $\mathcal C^l$--diffeomorphic to $\mathbb R$ or $\mathbb R^0=\{\ast\}$.
\label{a el pla}
\end{proposition}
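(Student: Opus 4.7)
The plan is to combine the three structural facts about $\Ima f^k$ that have just been established in the text, together with the classification of connected $1$--manifolds.

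First I would invoke Proposition \ref{retracte_dif}, which, since $M$ is a connected $\mathcal C^l$--manifold with $l\geq 1$ and $f$ is a $\mathcal C^l$--solution of \eqref{ f^n= f^k}, tells us that $\Ima f^k$ is a connected $\mathcal C^l$--submanifold of $M$. Next, because $f$ is assumed non-periodic, Remark \ref{dim(M)=dim (Ima f^k)} forces $\dim(\Ima f^k)<\dim M=2$, so that $d:=\dim(\Ima f^k)\in\{0,1\}$. Finally, since $M$ is contractible, Proposition \ref{contractil_Imaf^k} gives that $\Ima f^k$ is contractible.

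I would then split into the two possible dimensions. If $d=0$, a connected $\mathcal C^l$--manifold of dimension $0$ is a singleton, which is $\mathcal C^l$--diffeomorphic to $\mathbb R^0=\{\ast\}$. If $d=1$, the classification of connected smooth $1$--manifolds (without boundary) says that $\Ima f^k$ is $\mathcal C^l$--diffeomorphic to either $\mathbb R$ or $\mathbb S^1$. Because $\Ima f^k$ is contractible and $\mathbb S^1$ is not (its fundamental group is $\mathbb Z$), the case $\mathbb S^1$ is excluded, leaving $\Ima f^k\cong_{\mathcal C^l}\mathbb R$.

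The only genuinely delicate point is to make sure that the submanifold $\Ima f^k$ inherited from Proposition \ref{retracte_dif} is a manifold \emph{without boundary}, so that the standard classification of $1$--manifolds applies as stated. This is automatic from the convention adopted in the paper (the word ``manifold'' means ``without boundary''), since Proposition \ref{retracte_dif} was formulated in the same convention; the retraction argument used there produces a submanifold in the usual sense. No boundary can arise because every point of $\Ima f^k$ has, via the retraction, a neighborhood in $\Ima f^k$ that is $\mathcal C^l$--diffeomorphic to an open subset of $\mathbb R^d$. Once this is noted, the two bullet cases above conclude the proof.
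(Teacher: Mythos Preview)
Your proof is correct and follows essentially the same approach as the paper's: invoke Proposition~\ref{retracte_dif} and Remark~\ref{dim(M)=dim (Ima f^k)} to get a connected $\mathcal C^l$--submanifold of dimension $0$ or $1$, apply the classification of $1$--manifolds, and use Proposition~\ref{contractil_Imaf^k} to rule out $\mathbb S^1$. Your additional remark about the ``without boundary'' convention is a helpful clarification that the paper leaves implicit.
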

\begin{proof}

By Proposition \ref{retracte_dif} and Remark \ref{dim(M)=dim (Ima f^k)}, $\Ima f^k $ is a connected manifold  with \linebreak$\dim (\Ima f^k)<2$. If $\dim (\Ima f^k)=0$, $\Ima f^k=\{\ast\}$ and if $\dim (\Ima f^k)=1$ by the classification of one dimensional manifolds (see \cite{dif_Milnor}) $\Ima f ^k$ is $\mathcal C^l$--diffeomorphic to $\mathbb R$ or $\mathbb S^1$. However, by Proposition \ref{contractil_Imaf^k} $\Ima f^k$ is contractible, hence $\Ima f ^k\cong \mathbb R$.
\end{proof}

\begin{remark}
The argument above holds if we drop the dimensional condition on $M$ and impose $\dim (\Ima f^k)<2 $.
\label{dim (Ima f^k)<2 imatge}
\end{remark}

Now $\Ima f^k$ is an $f$--invariant set where $f$ is periodic. Hence, when $\Ima f^k\cong \mathbb R$ we can use Proposition \ref{kerejarto_1dim_I} to prove the following statement.

\begin{proposition}
Let $n,k\in \mathbb N\setminus \{0\}$ with $n>k$, $M$ a contractible $\mathcal C^1$--manifold and $f:M\rightarrow M$ a $\mathcal C^1$ function  with $\dim (\Ima f^k)=1$. If $n-k$ is odd, $f^n=f^k$ if and only if $f^{k+1}=f^k$.  If $n-k$ is even, $f^n=f^k$ if and only if $f^{k+2}=f^k$.
\label{f^k+2=f^k or f^k+1=f^k 2 dim}
\end{proposition}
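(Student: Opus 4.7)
The plan is to reduce the statement to the one-dimensional Proposition \ref{f^k+2=f^k or f^k+1=f^k 1dim} by showing that $f_{|\Ima f^k}$ is effectively a continuous self-map of an interval.

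The reverse direction requires no serious work: if $f^{k+1}=f^k$, composing on the left with $f$ yields $f^{k+j}=f^k$ for every $j\geq 1$, in particular for every $n>k$ with $n-k$ odd; if $f^{k+2}=f^k$, iterating composition with $f^2$ gives $f^{k+2j}=f^k$ for every $j\geq 0$, covering every $n$ with $n-k$ even.

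For the forward direction I would assume $f^n=f^k$ and first unpack the geometry of $\Ima f^k$. Proposition \ref{retracte_dif} shows that $\Ima f^k$ is a connected $\mathcal C^1$--submanifold of $M$; Proposition \ref{contractil_Imaf^k} shows it is contractible (since $M$ is); and combining these with $\dim(\Ima f^k)=1$, the classification of one-dimensional manifolds, exactly as used in Proposition \ref{a el pla} and Remark \ref{dim (Ima f^k)<2 imatge}, forces $\Ima f^k$ to be $\mathcal C^1$--diffeomorphic to $\mathbb R$. Next, Proposition \ref{resultat_conjunts} gives that $g:=f_{|\Ima f^k}$ is a continuous (even $\mathcal C^1$) self-map of $\Ima f^k$ satisfying $g^{n-k}=\Ide$. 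Transporting $g$ through the diffeomorphism $\Ima f^k\cong \mathbb R$ and applying Proposition \ref{kerejarto_1dim_I} then yields $g=\Ide$ when $n-k$ is odd, and $g^2=\Ide$ when $n-k$ is even.

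To finish I would observe that in the odd case, any $x\in M$ satisfies $f^{k+1}(x)=f(f^k(x))=g(f^k(x))=f^k(x)$, whence $f^{k+1}=f^k$; and in the even case $f^{k+2}(x)=f^2(f^k(x))=g^2(f^k(x))=f^k(x)$, whence $f^{k+2}=f^k$. I do not expect any real obstacle: the proof merely glues together Propositions \ref{retracte_dif}, \ref{contractil_Imaf^k}, \ref{resultat_conjunts} and \ref{kerejarto_1dim_I} via the 1-manifold classification. The only subtle point is confirming that the diffeomorphism with $\mathbb R$ genuinely transfers the periodic-function dichotomy to $g$, which is immediate because Proposition \ref{kerejarto_1dim_I} only requires continuity and holds on any proper interval.
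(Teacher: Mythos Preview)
Your proposal is correct and follows essentially the same approach as the paper: the paper's ``proof'' is just the sentence preceding the proposition, which notes that $\Ima f^k\cong\mathbb R$ (via the argument of Proposition \ref{a el pla} and Remark \ref{dim (Ima f^k)<2 imatge}) and that $f_{|\Ima f^k}$ is periodic, so Proposition \ref{kerejarto_1dim_I} applies. You have simply written out the details of this sketch, including the use of the characterization in Proposition \ref{resultat_conjunts} to pass back from the behavior of $g$ to the equations $f^{k+1}=f^k$ or $f^{k+2}=f^k$.
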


\begin{remark}
If $\dim (\Ima f^k) = 0$, we have $f^{k+1}=f^k$ by Remark \ref{obs_|Ima f^k|=1}.
\end{remark}

To control how the set  $\Ima f^k\cong \mathbb R$ is embedded in the plane we use the following reformulation of \cite[Lemma 3.6]{kerejarto_smooth}.

\begin{lemma}
Let $C\subset \mathbb R^2$ be a $\mathcal C^l $--submanifold of $\mathbb R^2$ for some $l\in \{1,2,\dots, \infty\}$ which is closed as a subset and $C\cong \mathbb R$. Then, there exists a $\mathcal C^l$--diffeomorphism $\varphi: \mathbb R^2\rightarrow\mathbb R^2$  such that $\varphi(C)=\mathbb R \times \{0\}$.

\label{aplanar_R2}
\end{lemma}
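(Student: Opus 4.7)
The plan is to straighten $C$ locally in a tubular neighborhood, then extend the straightening over each of the two components of $\mathbb R^2\setminus C$ and glue along $C$.

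Since $C$ is closed in $\mathbb R^2$ and diffeomorphic to $\mathbb R$, I would parametrize it by a proper $\mathcal C^l$-embedding $\gamma:\mathbb R\to\mathbb R^2$ with $\gamma(\mathbb R)=C$ (properness comes from $C$ being closed as a subset). The normal bundle of $C$ is a line bundle over a contractible base, hence trivial, so the $\mathcal C^l$-tubular neighborhood theorem provides a $\mathcal C^l$-diffeomorphism $T:\mathbb R\times(-1,1)\to V$ onto an open neighborhood $V\supset C$ with $T(t,0)=\gamma(t)$. Because $\gamma$ is proper, $C\cup\{\infty\}$ is a Jordan curve in $\mathbb S^2=\mathbb R^2\cup\{\infty\}$, and by the Jordan curve theorem $\mathbb R^2\setminus C$ splits into two open connected components $\Omega^+$ and $\Omega^-$; via $T$ each closure $\overline{\Omega^{\pm}}$ inherits the structure of a $\mathcal C^l$-manifold with boundary $C$.

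The core task is to produce a $\mathcal C^l$-diffeomorphism $\Psi^+:\overline{\Omega^+}\to\mathbb R\times[0,\infty)$ that coincides with $T^{-1}$ on $V\cap\overline{\Omega^+}$, and analogously $\Psi^-$. I would construct a proper $\mathcal C^l$-function $\rho:\overline{\Omega^+}\to[0,\infty)$ with $\rho^{-1}(0)=C$, nowhere vanishing differential, and $\rho(T(t,s))=s$ near $C$: start with the tubular coordinate $s$ on $V\cap\overline{\Omega^+}$, extend by a $\mathcal C^l$ partition of unity to a proper function on $\overline{\Omega^+}$, then eliminate its (isolated, in dimension $2$) critical points by Smale-type cancellation, which is unobstructed because $\overline{\Omega^+}$ is contractible. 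The normalized gradient flow of $\rho$ (with respect to the Euclidean metric of $\mathbb R^2$ restricted to $\overline{\Omega^+}$, reparametrized so $\rho$ grows linearly along orbits) then defines $\Psi^+$. Gluing $\Psi^+$ and $\Psi^-$ along $C$ is automatic since both restrict to $T^{-1}$ on a neighborhood of $C$, yielding the desired $\varphi:\mathbb R^2\to\mathbb R^2$ with $\varphi(C)=\mathbb R\times\{0\}$.

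The main obstacle is the Morse-theoretic step: producing a \emph{proper} $\mathcal C^l$-function on $\overline{\Omega^+}$ without critical points that still agrees with the tubular coordinate near $C$. One way to bypass explicit Morse cancellation is to construct the straightening flow directly: take a $\mathcal C^l$ vector field $X$ on $\overline{\Omega^+}$ that equals $\partial_s$ in the tubular chart and is nowhere zero elsewhere, built by gluing local transversal fields via a $\mathcal C^l$ partition of unity; then verify that its flow is complete with every forward orbit escaping to infinity, so that $(p,s)\mapsto\phi_s(p)$ gives a $\mathcal C^l$-diffeomorphism $C\times[0,\infty)\to\overline{\Omega^+}$. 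Completeness and the non-recurrence of orbits are where simple connectedness of $\Omega^+$ and the two-dimensional setting are essential: a closed orbit would bound a disc in $\Omega^+$ with an interior zero of $X$ (by the Poincaré–Hopf argument), contradicting the construction.
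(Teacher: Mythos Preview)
The paper does not prove this lemma; it is quoted as ``a reformulation of \cite[Lemma 3.6]{kerejarto_smooth}'' and used as a black box. So there is no in-paper argument to compare against, and your proposal must stand on its own.

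Your overall architecture is the standard one and is sound: use properness of $C$ to get a Jordan curve in $\mathbb S^2$, split $\mathbb R^2$ into two half-planes-with-boundary along $C$, straighten each half, and glue via the tubular collar. The difficulty, as you correctly locate, is the straightening of each $\overline{\Omega^\pm}$, and here both of your suggested routes have genuine gaps.

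For the Morse route, ``Smale-type cancellation, which is unobstructed because $\overline{\Omega^+}$ is contractible'' is not a proof. Smale's cancellation machinery (Whitney trick, handle sliding) is a high-dimensional phenomenon; in dimension $2$ one cancels critical points by hand, pairing each local extremum with an adjacent saddle along a unique connecting trajectory. That this can be done globally until no critical points remain uses the contractibility, but it is a separate argument you have not supplied, and it must also be done while preserving the prescribed behaviour $\rho = s$ near the boundary and the properness of $\rho$.

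For the vector-field route, the sentence ``built by gluing local transversal fields via a $\mathcal C^l$ partition of unity'' is where the argument breaks: a partition-of-unity average of nowhere-zero fields is typically zero somewhere. Producing a nowhere-vanishing field on $\overline{\Omega^+}$ transverse to $C$ is essentially equivalent to what you are trying to prove. Moreover, even granting such an $X$, ruling out periodic orbits via Poincar\'e--Hopf does not give you that every forward orbit leaves every compact set (orbits could spiral toward a limit cycle of another orbit, or the flow could be incomplete); you need a Lyapunov-type function, which brings you back to the Morse route.

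A cleaner way to close the argument in this dimension is to invoke the smooth Schoenflies theorem in $\mathbb S^2$ directly (this is essentially what the cited reference does): the Jordan curve $C\cup\{\infty\}$ bounds two $\mathcal C^l$ discs in $\mathbb S^2$, and the disc diffeomorphisms, chosen to agree with $T^{-1}$ on the collar, restrict to the desired $\varphi$ on $\mathbb R^2$. Finally, watch the regularity bookkeeping: the na\"ive tubular map $(t,s)\mapsto \gamma(t)+s\,\nu(t)$ is only $\mathcal C^{l-1}$ when $\gamma$ is $\mathcal C^l$, so to stay in $\mathcal C^l$ you should build the collar from local $\mathcal C^l$ graph charts of $C$ rather than from the unit normal.
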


We are now prepared to prove that after a conjugation, all $\mathcal C^1$--solutions
of \eqref{ f^n= f^k} in the plane have $\Ima f^k= \mathbb R^i\times \{0\}^{2-i}$ with $i\in\{0,1,2\}$ and $f_{|\Ima f^k}$ is linear. If $f^k$ is constant the result is trivial, and if $f$ is periodic, it is a restatement of Kerékjártó's Theorem. Otherwise, we have:

\begin{theorem}
Let $f:\mathbb R ^2\rightarrow \mathbb R^2$ be a $\mathcal C^l$--solution of \eqref{ f^n= f^k} with $l\in \{1,2,\dots,\infty\}$. Assume that $f$ is non-periodic and $f^k$ is non-constant. Then, $f$ is $\mathcal C^l$--conjugated to a function $g$ with $\Ima g^k=\mathbb R\times \{0\}$ such that

\begin{itemize}
    \item if $n-k$ is odd, $g(x,0)=(x,0)$ for all $x\in \mathbb R$;
\item if $n-k$ is even, either $g(x,0)=(-x,0)$ for all $x\in \mathbb R$, or $g(x,0)=(x,0)$ for all $x\in \mathbb R$.
\end{itemize}
\label{clas_R2}
\end{theorem}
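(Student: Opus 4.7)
The plan is to peel off $\Ima f^k$ with two successive conjugations: first flatten it to the $x$--axis using Lemma \ref{aplanar_R2}, then fix the one-dimensional dynamics on the axis using the one-dimensional theory.

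First I would show that $\Ima f^k$ is $\mathcal C^l$--diffeomorphic to $\mathbb R$. Since $\mathbb R^2$ is a contractible connected $\mathcal C^l$--manifold, Propositions \ref{contractil_Imaf^k} and \ref{retracte_dif} give that $\Ima f^k$ is a contractible connected $\mathcal C^l$--submanifold of $\mathbb R^2$. Because $f$ is non-periodic, Remark \ref{dim(M)=dim (Ima f^k)} rules out $\dim(\Ima f^k)=2$; because $f^k$ is non-constant and $\Ima f^k$ is connected, its dimension cannot be $0$. Hence $\dim(\Ima f^k)=1$, and Proposition \ref{a el pla} yields $\Ima f^k\cong \mathbb R$. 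By Remark \ref{obs_retract}, $\Ima f^k$ is closed as a subset of $\mathbb R^2$, so Lemma \ref{aplanar_R2} supplies a $\mathcal C^l$--diffeomorphism $\varphi_1:\mathbb R^2\to\mathbb R^2$ with $\varphi_1(\Ima f^k)=\mathbb R\times\{0\}$. Setting $g_1:=\varphi_1\circ f\circ\varphi_1^{-1}$, we obtain a $\mathcal C^l$--solution of \eqref{ f^n= f^k} with $\Ima g_1^k=\mathbb R\times\{0\}$.

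Next I would analyse $g_1$ on the axis. By Proposition \ref{resultat_conjunts}, $g_{1|\Ima g_1^k}^{\,n-k}=\Ide$, and Proposition \ref{f^k+2=f^k or f^k+1=f^k 2 dim} applies (the hypothesis $\dim(\Ima g_1^k)=1$ is inherited from $\varphi_1$, and $\mathbb R^2$ is contractible). If $n-k$ is odd, then $g_1^{k+1}=g_1^k$, so $g_{1|\mathbb R\times\{0\}}=\Ide$ and we may take $g=g_1$. If $n-k$ is even, then $g_1^{k+2}=g_1^k$, so identifying $\mathbb R\times\{0\}$ with $\mathbb R$ via the first coordinate, the restriction $g_{1|\mathbb R\times\{0\}}$ corresponds to a $\mathcal C^l$--function $h:\mathbb R\to\mathbb R$ with $h^2=\Ide$. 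By Proposition \ref{kerejarto_1dim_I} $h$ is either the identity (in which case $g=g_1$ works) or a $\mathcal C^l$--involution.

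In the remaining involutive case I would apply Proposition \ref{kerejarto_1dim_I_invol} to produce a $\mathcal C^l$--diffeomorphism $\phi:\mathbb R\to\mathbb R$ such that $\phi\circ h\circ\phi^{-1}=-\Ide$, and then extend it trivially in the second coordinate by setting $\psi(x,y):=(\phi(x),y)$. Clearly $\psi:\mathbb R^2\to\mathbb R^2$ is a $\mathcal C^l$--diffeomorphism, and the conjugate $g:=\psi\circ g_1\circ\psi^{-1}$ still has $\Ima g^k=\mathbb R\times\{0\}$ because $\psi$ preserves this axis set-wise, while on the axis it satisfies $g(x,0)=(-x,0)$ by construction. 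The step that could in principle be a genuine obstacle is extending the one-dimensional conjugacy $\phi$ to a global diffeomorphism of the plane without disturbing the fact that $\Ima g^k$ is the $x$--axis; however the product extension $(x,y)\mapsto(\phi(x),y)$ is tailor-made for this purpose, so in the end the proof reduces almost entirely to correctly assembling the previously established results.
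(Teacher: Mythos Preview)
Your proof is correct and follows essentially the same route as the paper: flatten $\Ima f^k$ to the $x$--axis via Lemma \ref{aplanar_R2}, then use the one-dimensional classification (Propositions \ref{kerejarto_1dim_I} and \ref{kerejarto_1dim_I_invol}) on the restriction and extend the resulting conjugacy by $(x,y)\mapsto(\phi(x),y)$. The only cosmetic difference is that the paper invokes Proposition \ref{resultat_conjunts} directly to see that the restricted map is periodic, whereas you route through Proposition \ref{f^k+2=f^k or f^k+1=f^k 2 dim} first; both paths land in the same place.
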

\begin{proof}

 By Proposition \ref{a el pla}, $\Ima f^k\subset \mathbb R^2$ is a $\mathcal C^l$--submanifold $\mathcal C^l$--diffeomorphic to $ \mathbb R$. Moreover, by Remark \ref{obs_retract} $\Ima f^k$ is closed, and thus by the previous lemma there is a $\mathcal C^l$--diffeomorphism $\varphi:\mathbb R ^2\rightarrow \mathbb R^2$ such that $\varphi(\Ima f^k)=\mathbb R \times \{0\}$. If we define $F= \varphi \circ f\circ \varphi^{-1}$, it is clear that $F^n=F^k$ and $\Ima F^k=\mathbb R \times \{0\}$.
 Hence, by Proposition \ref{resultat_conjunts} $F^{n-k}_{|\mathbb R\times \{0\}}=\Ide$.

Now we denote by $i_1$ (resp. $p_1$) the inclusion (resp. projection) respect
the first variable. Clearly  $\mathfrak f=p_1 \circ F\circ i_1$ is a periodic univariate function. Thus, by Proposition \ref{kerejarto_1dim_I}  and \ref{kerejarto_1dim_I_invol} either $\mathfrak f=\Ide$ (this is always the case if $n-k$ is odd) or $\mathfrak f$ is $\mathcal C^l$--conjugated to $-\Ide$ by $\phi_1$. In the first case we take $g=F$. In the later case
we take $\phi(x,y)=(\phi_1(x),y)$ and $g=\phi^{-1}\circ F\circ \phi$. It is easy to check that $g$ has the desired properties.
\end{proof}

Now we can tackle Theorem \ref{rectifica}.

\begin{proof}[Proof of Theorem \ref{rectifica}]
As a particular case of Proposition \ref{f^k+2=f^k or f^k+1=f^k 2 dim} we get $f^n=f$ if and only if $f^3=f$. We only prove the case $f^2=f$ as the other one can be seen similarly.

If $g$ is defined as in the theorem statement, a simple computation yields $g^2=g$. Consider $f$ as in the theorem statement, then by Theorem \ref{clas_R2} there is a function $g$ smoothly conjugated to $f$ such that  $\Ima g= \mathbb R \times \{0\}$ and  $g_{|\mathbb R \times \{0\}}=\Ide$. Then, clearly $g(x,y)=(g_1(x,y),0)$ with $g_{1|\mathbb R \times \{0\}}=\Ide$ and since $g_1 \in \mathcal C^1$ we have
 \begin{equation*}
 \begin{split}
g_1(x_0,y_0)=&  g_1(x_0,0)+   g_1(x_0,y_0) - g_1(x_0,0)= x_0+ \int_0^1\frac{\partial }{\partial t} g_1(x_0,ty_0)dt\\
=& x_0+y_0 \int_0^1\frac{\partial  g_1 }{\partial y}(x_0,ty_0)dt=x_0+y_0\mathfrak g(x_0,y_0),
 \end{split}
 \end{equation*}
where  $\mathfrak g(x_0,y_0) = \int_0^1\frac{\partial g_1}{\partial y}(x_0,ty_0)dt$. Since $ g_1\in \mathcal C^\infty$, we have $\mathfrak g\in \mathcal C^\infty$ and thus, $g$ is as required.
\end{proof}

We should note that different functions  $\mathfrak g\in \mathcal C^\infty$ can define the same map $g$ up to smooth conjugacy.

\begin{remark}
We only use that $f\in \mathcal C^\infty$ in the last line of the proof. Hence, if $f\in \mathcal C^l$ with $1\leq  l<\infty$ we have a similar result. However, in this case $\mathfrak g\not \in \mathcal C^l$, instead it is a particular type of  $\mathcal C^{l-1}$--function.
\end{remark}

We also note that the same argument can be used to prove that, after a $\mathcal C^\infty$--conjugation, the smooth (non-periodic and with $f^k$ non-constant) solutions of \eqref{ f^n= f^k} with $n-k$ odd are exactly the functions $g(x,y)=(x+y\mathfrak g(x,y),g_2(x,y))$, where $\mathfrak g,g_2\in \mathcal C^\infty$ and  $g_2(\Ima g^{k-1})=\{0\}$. If $n-k$ is even then either $g$ is as above or $g(x,y)=(-x+y\mathfrak g(x,y),g_2(x,y))$ with the same conditions on $\mathfrak g$ and $g_2$.
\subsection{Linearizability}

Despite the somewhat satisfying classification of solutions of $f^n=f$
given by Theorem \ref{rectifica}, they need not to be linearizable.

 \begin{ex}
 There exists an infinite family of two variable idempotent polynomial  functions not topologically conjugated with each other and not linearizable.
 \label{polinomis_no_proj}
 \end{ex}
 \begin{proof}
 For $i\in \mathbb N\setminus\{0\}$ consider the polynomial
 \[p_i(x,y)=x\prod_{j=1}^i\frac{y-j}{j},\]
which can be expressed as $x+yq_i(x,y)$ for some polynomial $q_i(x,y)$. Hence, if  $f_i(x,y)=(p_i(x,y),0)$, by Theorem \ref{rectifica}, $f_i^2=f_i$. Moreover, $\Ima f_i=\mathbb R\times \{0\}$ and thus, $f_i$ collapses the space to a line of fixed points. Notice that the only linear maps with such behavior are projections to a line. Additionally, we have,
\[f_i^{-1}(0,0)=p_i^{-1}(0)=(\{0\}\times \mathbb R)\cup \left (\bigcup_{j=1}^i \mathbb R\times \{j\}\right ). \]
That is, the set $f_i^{-1}(0,0)$ is formed by a vertical line and $i $ horizontal ones, and thus it is not a manifold. In particular $f_i$ is not conjugated to a projection. Notice that the first component of $\nabla p_i (x,y)$ only vanishes inside $p_i^{-1}(0)$. Hence, for all $a\in \mathbb R^\ast$,  $f_i^{-1}(a,0)=p_i^{-1}(a)$ is a smooth manifold and it is clear that $f_i^{-1}(b,a)=\emptyset$ for all $b$.

Let $f_i$ and $f_j$ be conjugated by $\varphi$. Then, $\varphi (0,0)=(0,0)$ since $(0,0)$ is the only point where the preimage is not a manifold. Then, $\varphi$ is a homeomorphism between $f_i^{-1}(0,0)$ and $f_j^{-1}(0,0)$ and it is easy to see that they are homeomorphic if and only if $i=j$.
\end{proof}

Examining the previous example one may think that our problem comes from the fact that $\nabla p(z) =(0,0)$ at some points $z$. That is, by Theorem \ref{clas_R2} we know that if $f^2 =f$ and $f\in \mathcal C^1$ then after a $\mathcal C^1$--conjugation we may assume that $f=(g,0)$ with $g(x,0)=x$ (if $f$ is not constant or the identity). We would like to know if in this situation, the condition $\nabla g(z)\not =(0,0)$ for all $z\in \mathbb R^2$ is enough to deduce that $f$ is conjugated to a projection $P$. This condition is very natural, since it assures us that the preimages of $g$, and thus of $f$,  are manifolds. Moreover, if we want the conjugation $\varphi$ from $f$ to $P$, to be a diffeomorphism, this condition is necessary. Indeed, we would have $f=\varphi ^{-1}\circ P \circ \varphi $ and thus for all $z\in \mathbb R^2$,
\[D_{z}f = (D_{P(\varphi(z))}\varphi)^{-1}\cdot D_{\varphi(z)}P\cdot D_{z}\varphi.\]
Since the right hand side of the equality has range 1, $D_{z}f$ has range 1, i.e. \break$\nabla g(z)\not =(0,0)$.

We now show that the condition $\nabla g(z)\not =(0,0)$ for all $z\in \mathbb R^2$ is not enough.

\begin{ex}
Let  $g(x,y)=x+y{x^2}$, then $\nabla g(x,y)\not =(0,0)$ for all $x,y\in \mathbb R $  and the polynomial function $f(x,y)=(g(x,y),0)$ is idempotent and not linearizable.
 \end{ex}
 \begin{proof}
We have $\nabla g(x,y)=(1+2xy,x^2)$ and clearly it never vanishes. Moreover,
\[f^{-1}(0,0)=g^{-1}(0)= \{x=0\}\cup \{xy=-1 \textnormal{ and } x>0\}\cup \{xy=-1 \textnormal{ and } x<0\}.\]
 Thus, the preimage of $(0,0)$ has 3 connected components and therefore $f$ cannot be linearizable.
 \end{proof}

Until this point, we have only considered linear maps defined in the hole plane. Notice, that if a projection is defined in a subset of $\mathbb R^2$ its preimages can have multiple connected components.
If we  accept these kinds of projections we get the following positive result.

\begin{proposition}
Let $\mathbb R \times \{0\}\subset U\subset \mathbb R^2$ be a open set
such that for all $y\in \mathbb R$, $U\cap \mathbb R \times \{y\}$ is connected. Let  $g: U\rightarrow \mathbb R $, $g\in \mathcal C^l$  with $l\in \{1,\dots, \infty\}$ such that $g(x,0)=x$ and $\frac{\partial g}{\partial x}\not = 0$ in $U$. Then, $f=(g,0)$ is $\mathcal C^l$--conjugated  to the projection $P(x,y)=(x,0)$ defined in an open set $V\supset \mathbb R\times \{0\}$.
\label{proj_obert}
\end{proposition}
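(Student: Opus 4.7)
The plan is to exhibit an explicit conjugating diffeomorphism. Define $\varphi:U\to\mathbb R^2$ by $\varphi(x,y)=(g(x,y),y)$ and set $V=\varphi(U)$. The conjugation identity is then immediate from the hypothesis $g(t,0)=t$: one computes
\[\varphi(f(x,y))=\varphi(g(x,y),0)=(g(g(x,y),0),0)=(g(x,y),0)=P(\varphi(x,y)).\]
So the entire content of the proposition is that this $\varphi$ is a $\mathcal C^l$--diffeomorphism onto an open set $V\supset\mathbb R\times\{0\}$ with $P(V)\subset V$.

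Smoothness of $\varphi$ is clear from $g\in\mathcal C^l$. The Jacobian determinant equals $\partial g/\partial x$, which is nowhere zero on $U$ by hypothesis, so $\varphi$ is a local $\mathcal C^l$--diffeomorphism and $V=\varphi(U)$ is open. The key step is global injectivity: if $\varphi(x_1,y_1)=\varphi(x_2,y_2)$, then $y_1=y_2=:y$ and $g(x_1,y)=g(x_2,y)$. Because $U\cap(\mathbb R\times\{y\})$ is connected, it is an open interval, and the restriction $x\mapsto g(x,y)$ has nonvanishing derivative on that interval, hence is strictly monotonic, hence injective; therefore $x_1=x_2$.

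It remains to note that $\mathbb R\times\{0\}\subset V$ because $\varphi(x,0)=(g(x,0),0)=(x,0)$, so $\varphi$ restricts to the identity on $\mathbb R\times\{0\}$; and $P$ maps $V$ into itself since $P(V)=\mathbb R\times\{0\}\subset V$. Putting these together, $\varphi:U\to V$ is a $\mathcal C^l$--diffeomorphism conjugating $f:U\to U$ with $P:V\to V$.

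The only real obstacle is the injectivity step, which is precisely where the connectedness assumption on the horizontal slices of $U$ is used. Without it, distinct components of $U\cap(\mathbb R\times\{y\})$ could carry separately monotone branches of $g(\cdot,y)$ that take overlapping values, so $\varphi$ would remain a local diffeomorphism but fail to be globally injective; the hypothesis rules this out.
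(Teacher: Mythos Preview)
Your proof is correct and follows essentially the same route as the paper: both define the conjugating map $\varphi(x,y)=(g(x,y),y)$, verify the conjugation identity directly, use the nonvanishing Jacobian $\partial g/\partial x$ for the local diffeomorphism property, and invoke connectedness of the horizontal slices to get strict monotonicity and hence global injectivity. Your version is in fact slightly more complete, since you explicitly check that $\mathbb R\times\{0\}\subset V$ and that $P(V)\subset V$, points the paper leaves implicit.
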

\begin{proof}
Consider  $\varphi: U \rightarrow \varphi(U)=V$ with $\varphi (x,y)=(g(x,y),y)$. Clearly, $\varphi\in \mathcal C^l$ and,
\begin{align*}
P\circ \varphi (x,y) =& P(g(x,y),y)=(g(x,y),0),\\
\varphi \circ f (x,y)=& \varphi ( g(x,y),0)=(g(g(x,y),0),0)=(g(x,y),0).
\end{align*}
Thus, we only need to check that $\varphi$ is a diffeomorphism. Clearly, it is a local diffeomorphism, since $|D_{(x,y)}\varphi|= \frac{\partial g}{\partial x}(x,y)\not = 0$.

By definition, $\varphi$ is surjective; we check its injectivity. Let $(x_0,y_0), (x_1, y_1) \in U$ such that $\varphi (x_0,y_0)=\varphi( x_1,  y_1) $. That is $g(x_0,y_0)=g(x_1,y_1)$  and  $y_0=y_1$. We consider the function $\mathfrak g(x) = g(x,y_0) $ with derivative $\mathfrak g'(x)=\frac{\partial g}{\partial x}(x,y_0)\not = 0$. Since $\mathfrak g$ is defined in $U\cap \mathbb R\times \{y_0\}$ which is connected, $\mathfrak g$ is monotone. Hence, $\mathfrak g$ is injective and since $\mathfrak g(x_0)=g(x_0,y_0)=g(x_1,y_1)=g(x_1,y_0)=\mathfrak g(x_1)$, we have $x_0=x_1$.
\end{proof}

With the same arguments, one can get an analogous result when $g(x,0)=-x$. In this case, $f$ is conjugated to the map $-P(x,y)=(-x,0)$.

We can now get a local conjugation of $f$ with a
projection defined in the hole plane.

\begin{corollary}
Let  $f: \mathbb R^2\rightarrow \mathbb R^2 $ be a   $\mathcal C^l$--solution of  $f^n=f$ with $l\in\{1,\dots,\infty\}$, non-periodic and non-constant.
Then, $f$ restricted to a neighborhood of $\Ima f$ is conjugated to a projection or a projection composed with a reflection. That is, there exists a neighborhood $V\supset \Ima f$ and a $\mathcal C^l$--diffeomorphism $\phi: V\rightarrow \mathbb R^2$  such that  $\phi \circ f \circ \phi^{-1}=\mathfrak P$, where $\mathfrak P(x,y)=P(x,y)=(x,0)$ or $\mathfrak P(x,y)=-P(x,y)$.
\label{cor_proj_obert}
\end{corollary}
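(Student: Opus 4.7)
The plan is to combine Theorem \ref{clas_R2} (which puts $f$ in a prescribed form) with Proposition \ref{proj_obert} (which produces a local conjugation with a projection) and then reparametrize so that the target of the conjugating diffeomorphism is all of $\mathbb R^2$.

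First, since $f$ is non-periodic and non-constant, Theorem \ref{clas_R2} with $k=1$ produces a $\mathcal C^l$--diffeomorphism $\varphi_1$ of $\mathbb R^2$ such that $g:=\varphi_1\circ f\circ\varphi_1^{-1}$ has the form $g(x,y)=(g_1(x,y),0)$ with $\Ima g=\mathbb R\times\{0\}$ and $g(x,0)=(\epsilon x,0)$ for some $\epsilon\in\{+1,-1\}$; both signs can occur only when $n-1$ is even. Because $g_1\in\mathcal C^l$ with $l\geq 1$ and $g_1(x,0)=\epsilon x$, we have $\partial g_1/\partial x(x,0)=\epsilon\neq 0$, and by continuity the set $U_0=\{(x,y):\partial g_1/\partial x(x,y)\neq 0\}$ is an open neighborhood of $\mathbb R\times\{0\}$.

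Second, I would replace $U_0$ with a tubular neighborhood of the form $U=\{(x,y):|y|<\delta(x)\}\subset U_0$, where $\delta:\mathbb R\to(0,\infty)$ is a continuous function which is non-increasing as a function of $|x|$. Such a $\delta$ exists: starting from the positive, lower semicontinuous function $x\mapsto\sup\{t>0:\{x\}\times(-t,t)\subset U_0\}$, take its infimum over $|x'|\leq|x|$ and mollify. The monotonicity of $\delta$ ensures that every horizontal slice of $U$ is an interval, so the hypotheses of Proposition \ref{proj_obert} (or its obvious $-\Ide$ analog, used when $\epsilon=-1$) are fulfilled, producing a $\mathcal C^l$--diffeomorphism $\varphi_2:U\to V_0$, $\varphi_2(x,y)=(g_1(x,y),y)$, with $\varphi_2\circ g\circ\varphi_2^{-1}=\mathfrak P|_{V_0}$.

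Finally, I would build a $\mathcal C^l$--diffeomorphism $\varphi_3:V_0\to\mathbb R^2$ commuting with $\mathfrak P$. Any such map necessarily has the form $\varphi_3(u,v)=(u,\beta(u,v))$, where $\beta(u,0)=0$ and $v\mapsto\beta(u,v)$ is a $\mathcal C^l$--diffeomorphism of the vertical slice $V_0\cap(\{u\}\times\mathbb R)$ onto $\mathbb R$. Setting $\phi=\varphi_3\circ\varphi_2\circ\varphi_1$ and $V=\varphi_1^{-1}(U)\supset\Ima f$ then yields the desired conjugation $\phi\circ f\circ\phi^{-1}=\mathfrak P$. The main obstacle is the construction of $\varphi_3$: one needs the vertical slices of $V_0$ to be connected open intervals containing $(u,0)$ that depend $\mathcal C^l$--ly on $u$. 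This can be verified by applying the implicit function theorem to the equation $g_1(a,v)=u$ on $U$ (legitimate since $\partial g_1/\partial x\neq 0$ there), after which $\beta$ can be produced by an explicit smooth stretching (for instance via $\tanh$) of the slice onto $\mathbb R$.
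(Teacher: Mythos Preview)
Your overall strategy is the same as the paper's: normalize via Theorem~\ref{clas_R2}, apply Proposition~\ref{proj_obert} on a suitably shaped neighbourhood of the axis, and then reparametrize the target to get all of $\mathbb R^2$. The first two stages are fine.

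The weak point is your construction of $\varphi_3$. You assert that the vertical slices of $V_0=\varphi_2(U)$ are connected intervals with $\mathcal C^l$--endpoints and appeal to the implicit function theorem, but that theorem only gives you the $\mathcal C^l$--dependence of the inverse $a(u,v)$, not the connectedness of $\{v:(u,v)\in V_0\}$. Concretely, $(u_0,v)\in V_0$ iff $u_0\in g_1(H_v,v)$ where $H_v=\{x:|v|<\delta(x)\}$ is the horizontal slice of $U$; while each $g_1(H_v,v)$ is an interval, there is no obvious reason why the set of $v$ for which this interval contains $u_0$ should be connected. Your monotonicity condition on $\delta$ controls the \emph{horizontal} slices of $U$, which is exactly what Proposition~\ref{proj_obert} needs, but it gives you no direct control over the vertical slices of the image $V_0$.

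The paper sidesteps this entirely by shrinking again on the target side: once $\varphi(U)$ is an open set containing $\mathbb R\times\{0\}$, one simply chooses a smaller open set $W\subset\varphi(U)$ of the form $W=\{(x,y):|y|<h(x)\}$ with $h\in\mathcal C^\infty$ positive, and then the explicit map
\[
\psi(x,y)=\left(x,\ \tan\!\left(\frac{\pi y}{2h(x)}\right)\right)
\]
is a $\mathcal C^\infty$--diffeomorphism $W\to\mathbb R^2$ that commutes with $\mathfrak P$ (it fixes the first coordinate). Pulling $W$ back through $\varphi$ gives the final neighbourhood $V$. This avoids any analysis of the shape of $\varphi(U)$ and makes the last step a one-line computation; you could patch your argument the same way by inserting this shrinking step between your $\varphi_2$ and $\varphi_3$.
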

\begin{proof}
By Theorem \ref{clas_R2} we can assume that $f=(g,0)$ and $g(x,0)=x$ or $g(x,0)=-x$ and $g\in \mathcal C^l$. We will only prove the case $g(x,0)=x$, as the other one is analogous. Notice that  $\frac{\partial g}{\partial x} (x,0)\not =0$ for all $x\in \mathbb R$ and by continuity $\frac{\partial g}{\partial x} $ does not vanish in a neighborhood $U\supset \mathbb R\times \{0\}=\Ima f$. If we choose $U$ with a suitable shape, by the previous proposition we have  $\varphi : U \rightarrow \varphi (U)$  which $\mathcal C^l $--conjugates $f_{|U}$ with $P_{|\varphi (U)}$.

The map $\varphi$ is not the desired diffeomorphism since in general $\varphi(U)\not =\mathbb R^2$. To solve this problem, consider $\mathbb R\times \{0\}\subset W\subset \varphi (U)$ open such that  $\partial W$ is the union of the graphics of $h\in \mathcal C^\infty$ and $-h$. Now define $\psi: W \rightarrow \mathbb R^2$ as
\[\psi(x,y)=\left (x,\tan \left (\frac{\pi y}{2h(x)}\right)\right).\]
We have $|D_{(x,y)}\psi|\not = 0$ for all $x,y\in W$ and clearly $\psi$ is bijective, hence it is a $\mathcal C^\infty$--diffeomorphism. Furthermore, since $\psi$ does not change the first component, it conjugates the projection $P:W\rightarrow W$ with the projection $P: \mathbb R^2\rightarrow \mathbb R^2$. Finally, if we consider $V=\varphi^{-1}(W)\supset \Ima f$ we have  $\phi=\psi \circ \varphi :V  \rightarrow \mathbb R^2$ and it conjugates $f_{|V}$ with $P$ defined in the hole plane.
\end{proof}

As a corollary we get Theorem \ref{local_linearizable}. Indeed, if $f$ is constant clearly is conjugated to the zero map and if $f$ is periodic the conjugation is given by  Kerékjártó's Theorem (see Theorem \ref{kerejarto_R2}). Otherwise we can apply Corollary \ref{cor_proj_obert}.

Theorem \ref{local_linearizable}, is not true in the general case \eqref{ f^n= f^k}, where we would clearly choose a neighborhood of $\Ima f^k$ instead of a neighborhood of $\Ima f$. For instance, consider the smooth function $f(x,y)=(f_1(x),0)$ with
\begin{equation}
    f_1(x)=\begin{cases}-e^{\frac{-1}{x}}&\text{if }x>0,\\ 0&\text{if }x\le0.\end{cases}
    \label{e^1/x}
\end{equation}
It is easy to check that $f^3=f^2$ and that for every ball $B_r((0,0))\supset \Ima f^2=\{(0,0)\}$, $f(B_r((0,0)))=(-e^{\frac{-1}{r}},0]\times \{0\}$. Hence, the image of  $f_{|B_r((0,0))}$ is a manifold with boundary and thus it cannot be linearizable.

\subsection{Holomorphic case}
To end this section we consider functions defined in the complex plane. In this context it is natural to consider holomophic solutions of \eqref{ f^n= f^k}. This condition is very restrictive, and for instance it is well known that all non-constant holomorphic functions are open, see \cite[Chapter 3, Theorem 4.4]{stein2003complex}. Thus, if $f$ is a non-constant holomorphic solution of \eqref{ f^n= f^k}, then $\Ima f^k$ is open and since by  Remark \ref{obs_retract} it is also closed, we have $\Ima f^k=\mathbb C$. Therefore, by Remark \ref{exaustiva}, $f$ is periodic and by Proposition \ref{resultat_conjunts}, $f^{n-k}=\Ide$. Now we need the following elemental result  from \cite[Chapter 3, Exercice 14]{stein2003complex}.

\begin{lemma}
Let $f:\mathbb C\rightarrow \mathbb C$ be holomorphic and injective, then $f(z)=az+b$ for some $a,b\in \mathbb C$ with $a\not =0$.
\end{lemma}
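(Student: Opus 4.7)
The natural plan is to analyze the behavior of $f$ at infinity by studying the auxiliary function $g(z) = f(1/z)$, which is holomorphic on the punctured disc $\{0<|z|<\infty\}$ and therefore has an isolated singularity at $z=0$. By the standard trichotomy, this singularity is either removable, a pole, or essential, and I will rule out the first and third possibilities.

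First I would dispose of the essential singularity case. By the Casorati--Weierstrass theorem, if $0$ were an essential singularity of $g$, then $g$ would send every punctured neighborhood of $0$ to a dense subset of $\mathbb{C}$. Translating back, this says that $f(\{|z|>R\})$ is dense in $\mathbb{C}$ for every $R>0$. On the other hand, by the open mapping theorem (already invoked in the preceding paragraph of the paper), $f(\{|z|<1\})$ is a nonempty open subset of $\mathbb{C}$. Choosing $R>1$, the sets $f(\{|z|>R\})$ and $f(\{|z|<1\})$ must intersect, which contradicts injectivity of $f$. Next I would rule out the removable case: if $0$ were a removable singularity of $g$, then $g$ would be bounded near $0$, meaning $f$ is bounded outside some large disc. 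Combined with continuity on the closed disc, this gives $f$ bounded on all of $\mathbb{C}$, and Liouville's theorem then forces $f$ constant, again contradicting injectivity.

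Therefore $0$ is a pole of $g$, which is equivalent to saying that $f$ has a pole at infinity. A classical consequence is that $f$ must be a polynomial: expanding $f$ in its Taylor series $\sum a_n z^n$ around the origin and noting that $|f(z)|\to\infty$ as $|z|\to\infty$ forces all but finitely many coefficients to vanish. Once $f$ is a polynomial, the fundamental theorem of algebra shows that for any $w\in\mathbb{C}$ the equation $f(z)=w$ has exactly $\deg f$ roots counted with multiplicity. Injectivity forces $\deg f=1$ (if $\deg f\geq 2$, either there are two distinct preimages of a generic value, or every preimage is a multiple root, in which case $f'\equiv 0$, absurd). Hence $f(z)=az+b$ with $a\neq 0$.

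The only step requiring any real care is the exclusion of the essential singularity; the rest is a routine combination of Liouville, the pole-at-infinity characterization of polynomials, and the fundamental theorem of algebra. It is worth noting that one can shorten the argument slightly by invoking Picard's great theorem instead of Casorati--Weierstrass (yielding an even stronger contradiction), but Casorati--Weierstrass suffices and keeps the proof within the tools the paper has been using.
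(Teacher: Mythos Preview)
Your proof is correct and follows the standard argument. Note, however, that the paper does not actually prove this lemma: it simply quotes it as an exercise from Stein and Shakarchi's \emph{Complex Analysis} (Chapter~3, Exercise~14). What you have written is essentially the intended solution to that exercise --- analyze the singularity of $g(z)=f(1/z)$ at the origin, eliminate the removable case via Liouville and the essential case via Casorati--Weierstrass combined with the open mapping theorem, conclude that $f$ is a polynomial, and then use injectivity to force degree one. So there is nothing to compare against in the paper itself; you have supplied the proof that the paper omits. One minor point: your justification that a pole at infinity forces $f$ to be a polynomial (``$|f(z)|\to\infty$ forces all but finitely many coefficients to vanish'') is slightly compressed --- the clean way is to note that a pole of order $m$ gives $|f(z)|\le C|z|^m$ for large $|z|$ and then apply the Cauchy estimates --- but this is routine and does not affect correctness.
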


Thus, $f(z)=az+b$ with $a\not =0$. If $a=1$, $f$ is a translation and since $f$ is periodic, $f=\Ide$. Otherwise, $a\not =1$, and  $\varphi(z)=z - \frac{b}{1-a}$ conjugates $f$  with $g(z)=az$. Thus, $a^{n-k}=1$ and $g$ is a rotation centered at the origin of angle $\frac{2\pi d}{n-k}$ for some $d\in \mathbb Z$. Since $\varphi$ is a translation, $f$ is a rotation of the same angle centered in $\frac{b}{1-a}$.  Therefore we have proof the following result.

\begin{proposition}
The holomorphic solutions of \eqref{ f^n= f^k} are  rotations of angle $\theta\in \frac{2\pi}{n-k} \mathbb Z$ and constant maps.
\end{proposition}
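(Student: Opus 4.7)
The plan is to combine the open mapping theorem for non-constant holomorphic maps with the earlier observation (Remark \ref{obs_retract}) that $\Ima f^k$ is closed. This reduces the non-trivial case to a periodic holomorphic self-map of $\mathbb{C}$, which by Proposition \ref{resultat_conjunts} is injective, and then I can invoke the affine-injection lemma stated just before the proposition.

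Concretely, I would first dispose of constant maps, which are manifest solutions of \eqref{ f^n= f^k}. So suppose $f$ is non-constant. Then $f$ (hence $f^k$) is an open map, so $\Ima f^k$ is an open subset of $\mathbb{C}$; by Remark \ref{obs_retract} it is also closed, and by connectedness of $\mathbb{C}$ it equals $\mathbb{C}$. Remark \ref{exaustiva} then gives that $f$ is periodic and Proposition \ref{resultat_conjunts} yields $f^{n-k}=\Ide$, so in particular $f$ is a holomorphic bijection of $\mathbb{C}$.

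By the preceding lemma, $f(z)=az+b$ with $a\neq 0$. If $a=1$, then $f$ is a translation, and the only periodic translation is $\Ide$, corresponding to the rotation of angle $0\in\frac{2\pi}{n-k}\mathbb Z$. If $a\neq 1$, the translation $\varphi(z)=z-\frac{b}{1-a}$ conjugates $f$ to $g(z)=az$; the relation $f^{n-k}=\Ide$ becomes $a^{n-k}=1$, forcing $a=e^{2\pi i d/(n-k)}$ for some $d\in\mathbb Z$. Hence $g$ is a rotation of angle $\frac{2\pi d}{n-k}$ about the origin, and unwinding $\varphi$ shows $f$ is the rotation of the same angle about the fixed point $\frac{b}{1-a}$.

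There is no genuine obstacle here, since the preceding paragraphs already assemble all the ingredients: the main point is simply that holomorphicity upgrades the topological output ``$\Ima f^k$ is a closed retract'' to ``$\Ima f^k=\mathbb{C}$'' via the open mapping theorem, after which the classification is a matter of solving $a^{n-k}=1$. The only mild care needed is to phrase the conclusion so that it includes the identity (the case $d=0$) and the constant maps, both of which satisfy \eqref{ f^n= f^k} trivially.
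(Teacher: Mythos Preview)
Your proof is correct and follows essentially the same route as the paper: open mapping theorem plus Remark \ref{obs_retract} force $\Ima f^k=\mathbb C$, whence $f$ is periodic, hence affine by the injectivity lemma, and the conclusion follows from $a^{n-k}=1$. The only cosmetic difference is that you make explicit a couple of points (connectedness of $\mathbb C$, that $f^k$ is itself open) which the paper leaves implicit.
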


\section{Higher dimension Euclidean spaces}

If we want to get a nice classification  of the solutions of  \eqref{ f^n= f^k} for all dimension we will have to limit ourselves to a well behaved set of functions. Indeed, in Section \ref{review_periodic} we have seen that not even the  continuous (resp. smooth) periodic case is treatable in  $\mathbb R^3$ (resp. $\mathbb R^7$).

\subsection{Linear maps}

After a linear conjugation, we can assume that linear maps are in their Jordan canonical form. Studying  the Jordan blocks individually one can show that if $f$ is a linear solution of \eqref{ f^n= f^k} its eigenvalues are $(n-k)$th roots of the unity or  0. Moreover, one can show that if a Jordan block has a non-zero eigenvalue then it diagonalizes in $\mathbb C$. Thus, if we let $N_l$ be the nilpotent matrix of dimension $l$ and $R_\theta$ the rotation of angle $\theta$, i.e.
\[ N_3=\begin{pmatrix}
0&1&0\\
0&0&1\\
0&0&0
\end{pmatrix}, \hspace{1.5cm} R_\theta = \begin{pmatrix}
\cos \theta & -\sin \theta \\
\sin \theta & \cos \theta
\end{pmatrix},\]
then, the Jordan blocks of $f$ are of the form $1$, $-1$ (if $n-k$ is even), $R_\theta$ with $\theta \in  \frac{2\pi}{n-k} \mathbb Z$ and $N_l$ with $l\leq k$.

\subsection{Non-periodic \texorpdfstring{$\mathcal C^1$}{C1}--functions}

Many of the results seen in the previous section can be used in higher dimensions.

\begin{proposition}
Let $f:\mathbb R^m\rightarrow \mathbb R^m$ be a $\mathcal C^l$--solution of \eqref{ f^n= f^k} with $l\in \{1,2,\dots, \infty\}$. Then, $\Ima f^k\subset \mathbb R^m$ is a  $\mathcal C^l$--submanifold and if $d= \dim (\Ima f^k)\leq 2$, it is $\mathcal C^l$--diffeomorphic to $\mathbb R^d$.
\label{Imaf^k_R^m}
\end{proposition}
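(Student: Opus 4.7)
The first assertion is immediate: $\mathbb{R}^m$ is a connected $\mathcal{C}^l$--manifold, so by Proposition \ref{retracte_dif} the retract $\Ima f^k$ is a connected $\mathcal{C}^l$--submanifold of $\mathbb{R}^m$. The substantive part is identifying its diffeomorphism type when $d\leq 2$. For this the plan is to combine the contractibility obtained in Proposition \ref{contractil_Imaf^k} (since $\mathbb{R}^m$ is contractible) with the classification of low-dimensional manifolds.

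Treating the cases separately: if $d=0$, then $\Ima f^k$ is a connected $0$--dimensional manifold, hence a singleton, which is by definition $\mathbb{R}^0$. If $d=1$, I would apply the classical classification of connected $\mathcal{C}^l$--manifolds of dimension one, exactly as was done in the proof of Proposition \ref{a el pla}: such a manifold is $\mathcal{C}^l$--diffeomorphic either to $\mathbb{R}$ or to $\mathbb{S}^1$. Contractibility rules out $\mathbb{S}^1$ (which has $\pi_1 = \mathbb{Z}$), leaving $\Ima f^k \cong \mathbb{R}$.

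The case $d=2$ is the main obstacle and requires invoking the classification of surfaces. The plan is to note that $\Ima f^k$ is a connected, contractible (in particular simply connected) $\mathcal{C}^l$--surface without boundary. The classical classification of simply connected surfaces without boundary (via Radó's triangulation theorem together with the surface classification, or via the uniformization theorem in the smooth setting) tells us that up to homeomorphism the only possibilities are $\mathbb{S}^2$ and $\mathbb{R}^2$. Contractibility excludes $\mathbb{S}^2$, so $\Ima f^k$ is homeomorphic to $\mathbb{R}^2$; uniqueness of the smooth structure on $\mathbb{R}^2$ then upgrades this to a $\mathcal{C}^l$--diffeomorphism.

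The only delicate point worth double-checking is that $\Ima f^k$ genuinely has no boundary as a $\mathcal{C}^l$--submanifold, but this is built into the statement of Proposition \ref{retracte_dif} (the notion of submanifold used in the cited reference produces boundaryless manifolds), so no additional argument is needed. Everything else is a direct appeal to previously established results in the paper or standard classification theorems.
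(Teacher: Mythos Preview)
Your proof is correct and follows essentially the same route as the paper: Proposition \ref{retracte_dif} for the submanifold structure, Proposition \ref{contractil_Imaf^k} for contractibility, classification of low-dimensional manifolds to pin down the homeomorphism type, and uniqueness of smooth structures in dimensions $\leq 2$ to upgrade to a $\mathcal C^l$--diffeomorphism. The only cosmetic difference is that the paper packages the $d<2$ cases into a reference to Remark \ref{dim (Ima f^k)<2 imatge}, whereas you spell them out explicitly.
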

\begin{proof}
By Proposition \ref{retracte_dif} we know that $\Ima f^k$ is a  $\mathcal C^l$--submanifold. Now if \break${\dim(\Ima f^k)<2}$ by Remark  \ref{dim (Ima f^k)<2 imatge} we get the desired result. Assume that $\dim(\Ima f^k)=2$. Then, by Proposition \ref{contractil_Imaf^k} $\Ima f^k$ is contractible and by the classification of 2 dimensional manifolds, it is homomorphic to $\mathbb R^2$ (see \cite{clas_surfaces}). Since in low dimension every topological manifold has a unique $\mathcal C^l$ structure, we have the desired diffeomorphism.
\end{proof}

If $m=3$, as a consequence of the previous proposition and Remark \ref{dim(M)=dim (Ima f^k)}, we get:

\begin{corollary}
Let $f:\mathbb R^3\rightarrow \mathbb R^3$, be a $\mathcal C^l$--solution of  \eqref{ f^n= f^k} with $l\in \{1,2,\dots, \infty\}$. Then, $\Ima f^k$ is a  $\mathcal C^l$--submanifold of $\mathbb R^3$, $\mathcal C^l$--diffeomorphic to $\mathbb R^d$ where  $d=$\break  $ \textnormal{dim} (\Ima f^k )$.
\label{d=0,1,2,3}
\end{corollary}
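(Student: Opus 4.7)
The plan is to observe that this corollary is a direct assembly of Proposition~\ref{Imaf^k_R^m} with Remark~\ref{dim(M)=dim (Ima f^k)}, partitioning by the value of $d=\dim(\Ima f^k)\in\{0,1,2,3\}$. The first sentence of the conclusion, namely that $\Ima f^k$ is a $\mathcal C^l$--submanifold of $\mathbb R^3$, is already covered by Proposition~\ref{Imaf^k_R^m} without any dimensional restriction (this is where we invoke Proposition~\ref{retracte_dif} via Remark~\ref{obs_retract}). So the work is only in identifying the diffeomorphism type.

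For $d\le 2$, Proposition~\ref{Imaf^k_R^m} directly hands us a $\mathcal C^l$--diffeomorphism $\Ima f^k\cong \mathbb R^d$, so nothing remains to check in those cases. The only case not already absorbed into the preceding proposition is $d=3$, i.e. when $\dim(\Ima f^k)=\dim(\mathbb R^3)$. Here I will invoke Remark~\ref{dim(M)=dim (Ima f^k)} applied to the connected manifold $M=\mathbb R^3$: equality of dimensions forces $\Ima f^k$ to be simultaneously open (it is locally modelled on $\mathbb R^3$ via its submanifold charts) and closed (by Remark~\ref{obs_retract}, since $\mathbb R^3$ is Hausdorff), hence equal to $\mathbb R^3$ by connectedness. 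In particular $\Ima f^k=\mathbb R^3$ is trivially $\mathcal C^l$--diffeomorphic to $\mathbb R^d=\mathbb R^3$, so the conclusion holds with the identity as the diffeomorphism.

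There is no real obstacle here; the only thing to be careful about is not double-citing the classification of $2$--manifolds (already inside Proposition~\ref{Imaf^k_R^m}) and being explicit that the $d=3$ case reduces to the periodic, surjective situation via Remark~\ref{exaustiva}, so that $\Ima f^k$ literally equals $\mathbb R^3$ rather than merely being an abstract $3$--manifold embedded in it. With that caveat, the proof is a one-line case split and the statement follows.
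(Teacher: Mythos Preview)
Your proposal is correct and matches the paper's own justification exactly: the paper derives the corollary in one line from Proposition~\ref{Imaf^k_R^m} (handling $d\le 2$) together with Remark~\ref{dim(M)=dim (Ima f^k)} (handling $d=3$), just as you do. Your elaboration of the $d=3$ case via open--closed--connected is precisely the content of that remark, so nothing further is needed.
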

Let $f$ be as in Proposition \ref{Imaf^k_R^m}. Then by Proposition \ref{resultat_conjunts}, we have $f_{|\Ima f^k} ^{n-k}=\Ide $ and we can use the study of periodic functions in $\mathbb R^d$ with $d\leq 2$ to deduce properties of $f$. If $\Ima f^k =\{\ast\}$, $f^{k+1}=f^k$  by  Remark \ref{obs_|Ima f^k|=1}. We will now study the cases $d=1$ and $d=2$ separately. When doing so, we face knot theory problems, as we will see in the following paragraphs. We emphasize that we will only give ideas on how one may try to deal with them, rather than concrete results.

\subsubsection{One-dimensional  \texorpdfstring{$\Ima f^k$}{Im fk}}
\label{Ima f^k one dim}

By Proposition \ref{f^k+2=f^k or f^k+1=f^k 2 dim} we can limit ourselves
to the study of $f^{k+1}=f^k$ and $f^{k+2}=f^k$ without loss of generality. Now assume that Lemma \ref{aplanar_R2} holds in $\mathbb R^m$. That is, if $l\in \{1,\dots,\infty\}$ assume that  every  $\mathcal C^l$--submanifold of $\mathbb R^m$ which is diffeomorphic to $\mathbb R$ and closed as a subset can be send by an ambient  $\mathcal C^l$--diffeomorphism to the $x$--axis.
Then, if one replaces the conditions ``non-periodic and $f^k$ non-constant'' with  $\dim (\Ima f^k)=1$, it is not hard to see that analogous results to Theorem \ref{clas_R2}, Theorem \ref{rectifica}, Proposition \ref{proj_obert} and Corollary \ref{cor_proj_obert} hold in $\mathbb R^m$. Maybe the only two delicate parts are that in Theorem \ref{rectifica} we would have $g(x_1,\dots,x_m)=(g_1(x_1,\dots,x_m),0,\dots,0)$ with
 \[g_1(x_1,\dots,x_m)=\pm x_1+ x_2\mathfrak g_2(x_1,x_2)+x_3\mathfrak g_3(x_1,x_2,x_3)+\dots +x_m\mathfrak g_m(x_1,\dots,x_m),\]
where $\mathfrak g_i\in \mathcal C^\infty$, and that in Corollary \ref{cor_proj_obert} we consider the cylindrical coordinates $x,r,\theta_1,\dots,\theta_{m-2}$ and
\begin{equation}\partial W= \{(x,r,\theta_1,\dots,\theta_{m-2})\in\mathbb R\times \mathbb R^+\times [0,\pi]^{m-3}\times [0,2\pi)\hspace{1mm}:\hspace{1mm} r=h(x) \}
\label{cylindrical_cord}
.\end{equation}
Sadly, the existence of the diffeomorphism given by Lemma  \ref{aplanar_R2} depends on the ambient dimension, $m$.  To see this we introduce the following non-standard terminology.

\begin{definition}
A \emph{strong embedding} of $\mathbb R^i$ in $\mathbb R^m$ is a smooth submanifold of $\mathbb R^m$ diffeomorphic to $\mathbb R^i$ that is closed as a subset.
\end{definition}
If $m=3$, there are strong embeddings of $\mathbb R$ in $\mathbb R^3$ which cannot be placed in the $x$--axis through an ambient homeomorphism.
The overhand knot with extremes going to infinity is an example, since if we merge the ``endpoints'' we get a trefoil knot. Moreover, the following theorem from \cite{smooth_retracts_euclidian} assures us that any such submanifold can be the image of a smooth retraction.

\begin{proposition}
Let $l\in \{0,\dots,\infty\}$ and $M\subset \mathbb R^m$ be a connected $\mathcal C^l$--submanifold. Then, the following $2s+2$ conditions are equivalent.
\begin{itemize}
    \item  $M$ is a $\mathcal C^r$--retract of some $\mathcal C^r$--retraction for some $r\in \{0,\dots, l\}$.
    \item   $M$ is closed in $\mathbb R^m$ and $\mathcal C^r$--contractible in $M$ for some $r\in \{0,\dots, l\}$.
\end{itemize}
\label{cond_retract}
\end{proposition}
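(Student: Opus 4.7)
The plan is to establish the full equivalence among all $2l+2$ conditions by proving a cycle with three key links: (i) any $\mathcal C^r$-retract is closed and $\mathcal C^r$-contractible in $M$ for each $r$; (ii) a closed $\mathcal C^l$-contractible submanifold admits a $\mathcal C^l$-retraction of $\mathbb R^m$ onto it; (iii) $\mathcal C^0$-contractibility of $M$ in itself upgrades to $\mathcal C^l$-contractibility. Combined with the trivial regularity downgrades $\mathcal C^r\subset \mathcal C^{r'}$ for $r'\leq r$, these close the loop.

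For (i), the easy half: if $r:\mathbb R^m\rightarrow \mathbb R^m$ is a $\mathcal C^r$-retraction with image $M$, then $M=\{x\in \mathbb R^m:r(x)=x\}$ is closed by continuity. Composing $r$ with the smooth contraction $H(x,t)=(1-t)x$ of $\mathbb R^m$ and restricting to $M\times [0,1]$ gives a $\mathcal C^r$-homotopy from $\Ide_M$ to the constant at $r(0)\in M$, which witnesses $\mathcal C^r$-contractibility of $M$ in itself.

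For (ii), the core step: since $M$ is a $\mathcal C^l$-submanifold closed in $\mathbb R^m$, the tubular neighborhood theorem provides an open neighborhood $T\supset M$ with a $\mathcal C^l$-projection $\pi:T\rightarrow M$. Let $H:M\times [0,1]\rightarrow M$ be a $\mathcal C^l$-contraction with $H_0=\Ide_M$ and $H_1\equiv p\in M$. Choose nested open sets $M\subset V\subset \overline V\subset T$ and a $\mathcal C^\infty$-cutoff $\rho:\mathbb R^m\rightarrow [0,1]$ that vanishes on a neighborhood of $M$ and equals $1$ outside $V$. Define
\begin{equation*}
R(x) = \begin{cases} H(\pi(x),\rho(x)) & \text{if } x\in T, \\ p & \text{if } x\in \mathbb R^m\setminus T. \end{cases}
\end{equation*}
The two branches agree on the collar of $\partial T$ inside $T$, where $\rho\equiv 1$ forces $H(\pi(x),1)=p$. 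Clearly $R|_M=\Ide_M$ and $\Ima R\subset M$, so $R$ is the desired retraction. For (iii), given a continuous contraction $H:M\times [0,1]\rightarrow M$ whose endpoints $H_0=\Ide_M$ and $H_1\equiv p$ are already $\mathcal C^l$, the relative Whitney approximation theorem produces a $\mathcal C^l$-map $\tilde H:M\times [0,1]\rightarrow M$ agreeing with $H$ on $M\times \{0,1\}$, yielding the required $\mathcal C^l$-contraction.

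The main obstacle I foresee is verifying full $\mathcal C^l$-smoothness of the patched $R$ in step (ii) along the seam $\partial\{\rho=1\}$, especially for $l=\infty$. The decisive technical input is that $H(\cdot,1)\equiv p$: by a Hadamard-type factorization one writes $H(y,t)=p+(1-t)K(y,t)$ with $K$ of the same regularity, so on $T$ one has $R(x)-p=(1-\rho(x))\cdot K(\pi(x),\rho(x))$. Iterating this factorization (equivalently, choosing $\rho$ flat at $\partial\{\rho=1\}$ to absorb higher $t$-derivatives of $H$) shows that $R-p$ and all its derivatives vanish at the seam, matching the values from the constant branch. A secondary, milder difficulty is the non-compactness of $M$ in step (iii), which is handled by a compact exhaustion of $M\times [0,1]$ together with a partition-of-unity patching of local smoothings, each produced by the relative approximation theorem rel $M\times \{0,1\}$.
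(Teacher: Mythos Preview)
The paper does not prove this proposition; it is quoted from the reference \cite{smooth_retracts_euclidian}, so there is no in-paper argument to compare against. Your three-step cycle (retract $\Rightarrow$ closed and contractible; closed and $\mathcal C^l$-contractible $\Rightarrow$ $\mathcal C^l$-retract via a tubular neighborhood and a cutoff; $\mathcal C^0$-contractible $\Rightarrow$ $\mathcal C^l$-contractible via relative Whitney approximation) is the standard route and is essentially correct.

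Two remarks. First, your concern in the final paragraph is unnecessary: the map $x\mapsto H(\pi(x),\rho(x))$ is already $\mathcal C^l$ on all of $T$ simply as a composition of $\mathcal C^l$ maps, and it equals the constant $p$ on the \emph{open} set $T\setminus \overline V$ (where $\rho\equiv 1$). The gluing with the constant branch on $\mathbb R^m\setminus T$ therefore happens across an open overlap on which both formulas already agree identically, so there is no seam to analyze and no Hadamard factorization or flatness of $\rho$ is needed. Second, there is a small genuine gap in step (ii) at the endpoint $l=0$: the tubular neighborhood theorem requires at least $\mathcal C^1$, so for a purely topological submanifold you must replace $\pi$ by the fact that a closed topological manifold embedded in $\mathbb R^m$ is an ANR, hence a neighborhood retract; once you have a continuous neighborhood retraction the rest of your patching argument goes through unchanged.
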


\begin{remark}

We cannot drop the manifold condition on $M$. For instance, the comb space is contractible in itself but it is not a retract of $\mathbb R ^2$.
\end{remark}

We say that two subsets $A,B \subset \mathbb R^m$ are \emph{topologically equivalent} or just \emph{equivalent} if there is an ambient homeomorphism which sends $A$ to $B$. Using standard knots, it is easy to see that there are at least  countably many non-equivalent strong embeddings of $\mathbb R$ in $\mathbb R^3$. Thus, there are at least countable many smooth solutions of $f^2=f$ with $\dim (\Ima f)=1$ in $\mathbb R^3$ not conjugated to each other. We could have seen this through a slight modification of Example \ref{polinomis_no_proj}, nevertheless, the fact that there are a countable number of non-equivalent images is stronger. For instance, it is now clear that a classification such as the one in Theorem \ref{clas_R2} is not possible in $\mathbb R^3$. One could also try to prove that in fact there is an uncountable number of non-equivalent images by smoothly concatenating  the overhand and the figure-eight knot and doing a cantor diagonal argument (see Figure \ref{figure_nus}).

\begin{figure}[htbp]
    \centering
      \includegraphics[width=0.85\textwidth]{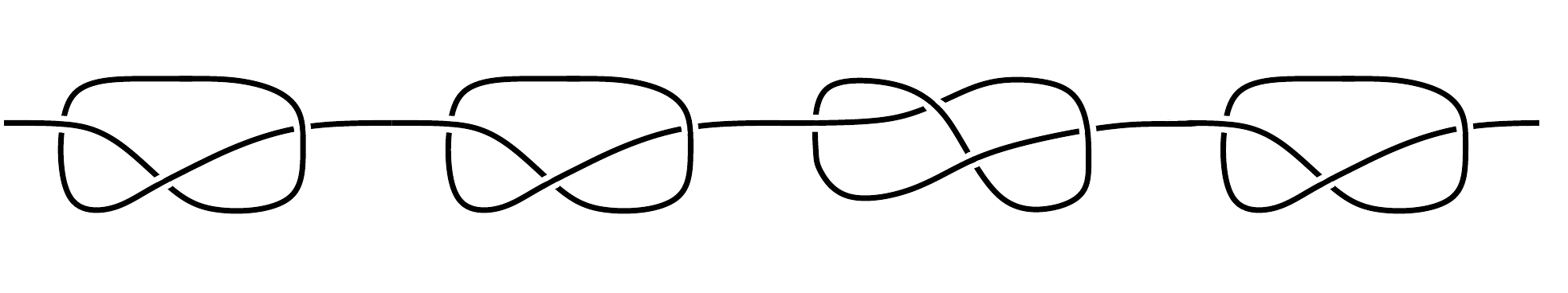}
      \caption{Concatenation of two overhand knots, one figure eight knot and one overhand knot.}
      \label{figure_nus}
 \end{figure}

If $m\geq 4$ it is well known that all smooth knots in $\mathbb S^m$ are smoothly trivial, i.e. there is a smooth diffeomorphism which sends them to an equator (see \cite{unknotted_spheres}).
Now suppose that for some compact subspace $K$,  $\Ima f^k\cap K^c$ is contained in the $x$--axis (after a  $\mathcal C^l$--endomorphism of $\mathbb R^m$). Then if we  compactificate $\mathbb R^m$ to $\mathbb S^m$ adding the point at infinitum, $\Ima f^k\cup \{\infty\}$ is sent to an smooth embedding of $\mathbb S^1$ in $\mathbb S^m$, i.e. a smooth knot in $\mathbb S^m$. The diffeomorphism that sends this knot to the equator can be used to create a diffeomorphism in $\mathbb R^m$ which sends $\Ima f^k$ to $\mathbb R\times \{0\}^{m-1}$.

Notice that for knots such as the one shown in Figure \ref{figure_nus}, it is not clear that outside a compact set we can diffeomorphically send them to the $x$--axis. Hence, the approach described in the previous paragraph does not work in general. However, we believe that with a direct approach one can prove that any strong embedding of $\mathbb R$ in $\mathbb R^m$ can be sent by a global diffeomorphism to $\mathbb R\times \{0\}^{m-1}$ when $m\geq4$.

\subsubsection{Two-dimensional \texorpdfstring{$\Ima f^k$}{Im fk}}
\label{sec_bid_Imaf}

By Proposition \ref{Imaf^k_R^m}, $\Ima f^k\cong \mathbb R^2$.  Assume now that $\Ima f^k=\mathbb R^2\times \{0\}^{m-2}$.
Then, following the proof of Theorem \ref{clas_R2} -- while using Theorem \ref{kerejarto_R2} instead of Proposition \ref{kerejarto_1dim_I_invol} -- one shows that after a $\mathcal C^l$--conjugation  $f(x,\bm 0)= (Mx,\bm 0)$, where $M$ is a  periodic linear map, $x\in \mathbb R^2$ and $\bm{0}=(0,\dots,0)\in \mathbb R^{m-2}$ (we do not distinguish between $\mathbb R^m$ and $\mathbb R^2\times \mathbb R^{m-2}$). Similarly, if one also has $f^n=f$ and $f\in \mathcal C^\infty$ it can be shown with arguments from Theorem \ref{rectifica}  that after a smooth conjugation,
\[f(x_1,x_2,\dots,x_m)=\left (M\begin{pmatrix}x_1\\x_2\end{pmatrix}+ x_3\mathfrak g_3(x_1,x_2,x_3)+\dots +x_m\mathfrak g_3(x_1,\dots,x_m),\bm0\right ),\]
where   $\mathfrak g_i:\mathbb R^m\rightarrow\mathbb R^2$ are smooth functions. We also have analogous results in the local context:

\begin{proposition}
Let  $f:\mathbb R^m\rightarrow \mathbb R^m  $ be a  $\mathcal C^l$--solution of  $f^n=f$ with $l\in\{1,\dots,\infty\}$ and $\Ima f = \mathbb R ^2\times \{0\}^{m-2}$.
 Then, $f$ restricted to a neighborhood of $\Ima f$ is linearizable. That is, there exists a neighborhood $V\supset \Ima f$ and a $\mathcal C^l$--diffeomorphism $\phi: V\rightarrow \mathbb R^m$  such that  $\phi \circ f \circ \phi^{-1}= L$, where $L(x,y)=(Mx,\bm0)$ with $M$ a  periodic linear map, $x\in \mathbb R^2$ and $y\in \mathbb R^{m-2}$.
\end{proposition}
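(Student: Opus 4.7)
The plan is to generalize the proof of Corollary \ref{cor_proj_obert} by replacing the one-dimensional fibre direction $\mathbb R$ with $\mathbb R^{m-2}$. Using the discussion just above the proposition, I may first $\mathcal C^l$--conjugate $f$ so that $f(x,\bm 0)=(Mx,\bm 0)$ for a periodic (hence invertible) linear $M\colon\mathbb R^2\rightarrow\mathbb R^2$. Since $\Ima f=\mathbb R^2\times\{0\}^{m-2}$, the map has the form $f(x,y)=(f_1(x,y),\bm 0)$ for some $\mathcal C^l$ map $f_1\colon\mathbb R^m\rightarrow\mathbb R^2$ with $f_1(x,\bm 0)=Mx$. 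By analogy with Proposition \ref{proj_obert}, I would then set
\[
\varphi(x,y)=(f_1(x,y),y).
\]
A direct computation using $f_1(z,\bm 0)=Mz$ gives $\varphi\circ f(x,y)=(Mf_1(x,y),\bm 0)=L\circ\varphi(x,y)$, so $\varphi$ intertwines $f$ with $L$ wherever it is defined. The Jacobian $D\varphi$ is block triangular with diagonal blocks $\partial_x f_1$ and the identity; on $\Ima f$ the first block equals $M$, so $\varphi$ is a local $\mathcal C^l$--diffeomorphism on some open neighborhood $U_0$ of $\Ima f$.

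The core technical step is to cut $U_0$ down to an open neighborhood $U\supset\Ima f$ on which $\varphi$ is also globally injective. Since $\varphi$ preserves the second coordinate, this is equivalent to showing that for every $y$ the slice map $F_y\colon x\mapsto f_1(x,y)$ is injective on $U\cap(\mathbb R^2\times\{y\})$. The map $F_{\bm 0}=M$ is a global linear isomorphism, so a quantitative inverse function theorem argument yields, for every compact $K\subset\mathbb R^2$, a $\delta(K)>0$ with $F_y|_K$ injective whenever $\|y\|<\delta(K)$. Using a locally finite exhaustion of $\mathbb R^2$ by compacts together with a smooth partition of unity, I would build a positive $\mathcal C^\infty$ function $h\colon\mathbb R^2\rightarrow\mathbb R$ such that $U=\{(x,y):\|y\|<h(x)\}\subset U_0$ and $\varphi|_U$ is injective. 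I expect this to be the main obstacle: unlike in Proposition \ref{proj_obert}, where non-vanishing of a single partial derivative combined with fibre connectedness forced monotonicity and hence injectivity, the slices here are two-dimensional, so the local injectivity estimates must be glued carefully along $\mathbb R^2$.

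Once $\varphi\colon U\rightarrow V':=\varphi(U)$ is a $\mathcal C^l$--diffeomorphism, I would finish as in Corollary \ref{cor_proj_obert}. Shrinking $h$ if necessary, $V'$ contains a tube $\{(x,y):\|y\|<\tilde h(x)\}$ for some smooth $\tilde h>0$, and I would define the radial stretching
\[
\psi(x,y)=\Bigl(x,\frac{y}{\tilde h(x)^2-\|y\|^2}\Bigr),
\]
which is a $\mathcal C^\infty$--diffeomorphism of this tube onto $\mathbb R^m$, fixes $\Ima f$ pointwise, and preserves the first coordinate. The last property yields $\psi\circ L=L\circ\psi$, so $\phi:=\psi\circ\varphi$, restricted to the preimage of the tube, is a $\mathcal C^l$--diffeomorphism from an open neighborhood of $\Ima f$ onto $\mathbb R^m$ satisfying $\phi\circ f=L\circ\phi$, as required.
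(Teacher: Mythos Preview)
Your proposal is correct and mirrors the paper's argument almost exactly: same normalisation $f(x,\bm 0)=(Mx,\bm 0)$, same conjugating map $\varphi(x,y)=(f_1(x,y),y)$ intertwining $f$ with $L$, same final radial stretch of a tube about $\Ima f$ onto $\mathbb R^m$ (the paper phrases the last step via modified cylindrical coordinates as in \eqref{cylindrical_cord}, but your explicit $\psi$ does the same job).

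The one point of divergence is the global injectivity of $\varphi$, which you single out as the main obstacle and attack with a quantitative inverse function theorem on a compact exhaustion. The paper instead lifts the monotonicity trick of Proposition~\ref{proj_obert} directly to two base dimensions. Writing $v(z),w(z)$ for the rows of $\partial_x f_1(z)$, one has $(v,w)\equiv M$ along $\Ima f$, so on a convex neighbourhood $U$ of $\Ima f$ the mixed determinant $\det\bigl(v(z)^T,w(t)^T\bigr)$ is nonzero for \emph{every} pair $z,t\in U$. Given $(x_0,y_0),(x_1,y_0)\in U$ with $x_0\neq x_1$, the vector $x_1-x_0$ cannot be orthogonal to some $v(z)$ and simultaneously to some $w(t)$ (otherwise those two vectors would be parallel, killing the mixed determinant); hence one of $g_1(\cdot,y_0),g_2(\cdot,y_0)$ has nonvanishing directional derivative along the segment $[x_0,x_1]\subset U$, is strictly monotone there, and forces $x_0=x_1$. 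This sidesteps the gluing you anticipate. Your route, by contrast, makes no use of $\dim\Ima f=2$ and would carry over unchanged to higher-dimensional images.
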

\begin{proof}

Let $x\in \mathbb R^2$ and $y\in \mathbb R^{m-2}$, then it is clear that $f(x,y)=(g(x,y),\bm0)$ for a certain function $g=(g_1,g_2):\mathbb R^m\rightarrow \mathbb R^2$. By the arguments made in the paragraph above we can assume without loss of generality that $g(x,\bm0)=Mx$. Now, given $z\in \mathbb R^m$, we define
\[v(z)=\left (\frac{\partial g_1}{\partial x_1}(z),\frac{\partial g_1}{\partial x_2}(z)\right ),\hspace{1cm} w(z)=\left (\frac{\partial g_2}{\partial x_1}(z),\frac{\partial g_2}{\partial x_2}(z)\right ).\]
Then, since $\begin{psmallmatrix}v(x,\bm0)\\w(x,\bm0) \end{psmallmatrix}=M$ for all $x\in \mathbb R^2$, there is a convex neighborhood $U$ of $\mathbb R ^2\times \{0\}^{m-2}$ such that for all $z,t\in U$, $ \textnormal{det}(v(z)^T,w(t)^T)\not =0$. That is, in $U$, the range of directions of  $v$ and  of $w$ do not intersect. Now we follow Proposition \ref{proj_obert}. We define $\varphi : U\rightarrow \varphi (U)$ as $\varphi (x,y)=(g(x,y),y)$ and we can check that it conjugates $f$ with $L$, it is surjective and a local $\mathcal C^l$--diffeomorphism. To prove that $\varphi$ is injective we slightly modify the argument made in Proposition \ref{proj_obert}. If $\varphi(x_0,y_0)=\varphi(x_1,y_1)$ it is clear that $y_0=y_1$. Then, if $x_0\not =x_1$, by the definition of $U$, either $x_0-x_1\not \in \langle\{v(z)^\perp\hspace{1mm}:\hspace{1mm} z\in U \}\rangle$ or  $x_0-x_1\not \in \langle\{w(z)^\perp\hspace{1mm}:\hspace{1mm} z\in U \}\rangle$. Assume without loss of generality that we are in the former case and notice that $v(x,y)$ represents the gradient at $x$ of $g_{1|\mathbb R^2\times \{y\}}:\mathbb R^2\rightarrow \mathbb R$. Thus, in  $[x_0,x_1]\subset \mathbb R^2$ (the segment that joins $x_0$ and $x_1$)  $g_{1|\mathbb R^2\times \{y_0\}}$  is monotonous and we get a contradiction since $g(x_0,y_0)=g(x_1,y_1)=g(x_1,y_0)$.

Finally, notice that $\varphi$ is not the desired map since   $\varphi(U)\not =\mathbb R^m$. To solve this we use the same techniques as in Corollary \ref{cor_proj_obert} while considering a modified version of the cylindrical coordinates from equation \eqref{cylindrical_cord}.
\end{proof}

Now, we would like to prove that there exists a global diffeomorphism which sends $\Ima f^k$ to $\mathbb R^2\times \{0\}^{m-2}$, in this case we say that $\Ima f^k$ is \emph{trivially embedded}.

If $m=4$, since there are knotted spheres in $\mathbb S^4$, one may show (as we did for $\mathbb S^1$ embedded in $\mathbb S^3$ with the overhand knot) that there are strong  embeddings of $\mathbb R^2$ in $\mathbb R^4$ not trivially embedded.

If $m\geq 5$ then  in \cite{unknotted_spheres} it is shown that any smooth embedding of $\mathbb S^2$ in $\mathbb S^m$ is smoothly trivial. Again we believe (especially if $m\geq 6$) that the same is true for strong  embeddings of $\mathbb R^2$ in $\mathbb R^m$.

If $m=3$, we believe that the following construction is a strong embedding of $\mathbb R^2$ in $\mathbb R^3$ which is not trivially embedded. Consider the embedding of $[0,\infty)$ into $\mathbb R^3$ formed by the smooth concatenation of overhand knots. Thicken this up to an embedding of $[0,\infty) \times \mathbb D^2$ into $\mathbb R^3$. The boundary of this construction is a strong embedding of $\mathbb R^2$ in $\mathbb R^3$ (after smoothing the edge $\{0\}\times \mathbb S^1$) and we believe it is not  trivially embedded. Another candidate is given in the first paragraph of \cite{TuckerThomasW.1977OTFS}.

\section{Other manifolds}
\label{manifolds}
We would like to study solutions of \eqref{ f^n= f^k} defined in other manifolds (for $\mathbb S^1$ see Section  \ref{on_circle}). We now state a useful result in this scenario.

\begin{proposition}
Let $M$ be a manifold and $f:M\rightarrow M$ a continuous solution of \eqref{ f^n= f^k}. Then, $\pi_l(\Ima f^k)$ is isomorphic to a subgroup of  $\pi_l(M)$ for all $l\geq 1$.
\label{prop_grup_fonamental}
\end{proposition}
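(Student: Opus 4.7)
The plan is to exploit the fact, already established in Remark \ref{obs_retract} and Proposition \ref{h^2= h}, that $\Ima f^k$ is a retract of $M$. Explicitly, the map $h=f^{k(n-k)}$ is continuous, idempotent, and has $\Ima h=\Ima f^k$. Viewing $h$ as a map $r\colon M\to\Ima f^k$ and letting $i\colon \Ima f^k\hookrightarrow M$ denote the inclusion, Proposition \ref{resultat_conjunts} applied to $h$ (which satisfies $h^2=h$) gives $h_{|\Ima h}=\Ide$, i.e.\ $r\circ i=\Ide_{\Ima f^k}$.

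Next I would fix a basepoint $x_0\in\Ima f^k$. Because $r$ fixes every point of $\Ima f^k$, both $i$ and $r$ are basepoint-preserving maps between $(\Ima f^k,x_0)$ and $(M,x_0)$. Applying the homotopy-group functor $\pi_l(-,x_0)$, which is functorial on pointed continuous maps for every $l\geq 1$, the identity $r\circ i=\Ide$ yields
\[
\pi_l(r)\circ\pi_l(i)=\Ide_{\pi_l(\Ima f^k,x_0)}.
\]
Hence $\pi_l(i)\colon\pi_l(\Ima f^k,x_0)\to\pi_l(M,x_0)$ has a left inverse, so it is injective, and its image is a subgroup of $\pi_l(M,x_0)$ isomorphic to $\pi_l(\Ima f^k,x_0)$.

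There is essentially no hard step: the argument is the standard fact that a retract of a space injects into it on homotopy groups. The only mild care required is basepoint bookkeeping, and this is automatic here since any point of $\Ima f^k$ is fixed by the retraction $r$, so we may take the common basepoint inside $\Ima f^k$ without worrying about change-of-basepoint isomorphisms. The result is then independent of the choice of basepoint in the usual sense (e.g.\ if $M$ or $\Ima f^k$ is not path-connected, one applies the argument componentwise).
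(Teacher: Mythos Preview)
Your argument is correct and is essentially identical to the paper's own proof: both use $h=f^{k(n-k)}$ to exhibit $\Ima f^k$ as a retract of $M$, then apply the functor $\pi_l$ to $r\circ i=\Ide$ to conclude that $i_\ast$ is injective. Your version is slightly more explicit about basepoints, but the content is the same.
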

\begin{proof}
Denote $h= f^{k(n-k)}$, by Proposition \ref{h^2= h}, $\Ima h=\Ima f^k$ and $h_{|\Ima f^k}=\Ide$. Then we have the following commutative diagrams,
\[\begin{tikzcd}\Ima f^k\rar{i}\arrow[ bend right]{rr}[black,swap]{\Ide} & M\rar{h} & \Ima f^k
\end{tikzcd} \hspace{0.3cm}\Longrightarrow \hspace{0.3cm}
\begin{tikzcd}\pi_l(\Ima f^k)\rar{i_\ast}\arrow[ bend right]{rr}[black,swap]{\Ide_\ast=\Ide} & \pi_l(M)\rar{h_\ast} & \pi_l(\Ima f^k)
\end{tikzcd}.\]
Where $i_\ast$ and  $h_\ast$ are group morphisms, and since $h_\ast \circ i_\ast=\Ide$, we know that  $i_\ast $ is injective and $h_\ast $ surjective. Then, $i_\ast$ is the desired isomorphism.
\end{proof}

The same arguments work for other homologies and cohomologies.

Looking at the problems faced in Sections \ref{Ima f^k one dim} and \ref{sec_bid_Imaf}, one may think that when studying solutions of \eqref{ f^n= f^k} in $\mathbb S^m$ the unknotting results of $\mathbb S^1$ or $\mathbb S^2$ in $\mathbb S^m$ for $m\geq 5$ would be very useful.  However, the following observation shows that this is not the case.

\begin{remark}
If $f:\mathbb S^m\rightarrow\mathbb S^m$ is a continuous solution of \eqref{ f^n= f^k}, then $\Ima f^k\not \cong \mathbb S^r$ for all $r\not =m$
\label{obs_S^m_grups}
\end{remark}

To prove this notice that for all $s>0$,  $\pi_s(\mathbb S^s)\cong \mathbb Z$, $\pi_l(\mathbb S^s)\cong 0$ if $l<s$, and use Proposition \ref{prop_grup_fonamental}.

Nevertheless, the study of $\mathcal C^1$--solutions of \eqref{ f^n= f^k} in $\mathbb S^2$ and $\mathbb S^3$ is quite simple.

\begin{proposition}
Let $f:\mathbb S^2\rightarrow \mathbb S^2$, be a $\mathcal C^1$--solution of \eqref{ f^n= f^k} where $f^k$ is not constant. Then, $f$ is topologically conjugated to an element of the orthogonal group $O(3)$.
\label{S^2 f^n=f^k}
\end{proposition}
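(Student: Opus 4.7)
The plan is to show that under the given hypotheses $f$ must be periodic, which reduces the statement to the classical Kerékjártó theorem on the sphere (Theorem \ref{kerejarto en S^2}). The argument is essentially a dimension count on $\Ima f^k$.

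First, I would invoke Proposition \ref{retracte_dif} to conclude that $\Ima f^k$ is a connected $\mathcal{C}^1$-submanifold of $\mathbb{S}^2$. By Remark \ref{obs_retract}, $\Ima f^k$ is closed in $\mathbb{S}^2$, and compactness of $\mathbb{S}^2$ then ensures that $\Ima f^k$ is a compact connected $\mathcal{C}^1$-submanifold. Let $d = \dim(\Ima f^k) \in \{0,1,2\}$.

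Next, I would rule out the lower-dimensional possibilities. If $d = 0$, connectedness forces $\Ima f^k$ to be a single point, so $f^k$ would be constant, contradicting the hypothesis. If $d = 1$, the classification of compact connected one-manifolds gives $\Ima f^k \cong \mathbb{S}^1$, which is forbidden by Remark \ref{obs_S^m_grups} (since $\pi_1(\mathbb{S}^1) \cong \mathbb{Z}$ cannot embed into $\pi_1(\mathbb{S}^2) = 0$). Hence $d = 2$, and Remark \ref{dim(M)=dim (Ima f^k)} forces $\Ima f^k = \mathbb{S}^2$; in particular $f^k$ is surjective, whence $f$ itself is surjective, and Remark \ref{exaustiva} together with Proposition \ref{resultat_conjunts} yields that $f$ is periodic.

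Finally, once $f$ is known to be a continuous periodic self-map of $\mathbb{S}^2$, Theorem \ref{kerejarto en S^2} delivers the desired topological conjugacy with an element of $O(3)$. There is really no serious obstacle in this scheme: the role of the $\mathcal{C}^1$-hypothesis is simply to upgrade the topological retract $\Ima f^k$ into a smooth submanifold so that a dimension is well defined, and then the three exclusion principles (connectedness ruling out $d=0$ combined with the non-constancy hypothesis, the homotopical obstruction of Remark \ref{obs_S^m_grups} ruling out $d=1$, and Remark \ref{dim(M)=dim (Ima f^k)} forcing surjectivity when $d=2$) collapse everything to the periodic case already covered by Kerékjártó.
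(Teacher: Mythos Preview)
Your proposal is correct and follows essentially the same approach as the paper's own proof: a dimension count on $\Ima f^k$ using Proposition \ref{retracte_dif}, ruling out $d=0$ by non-constancy, $d=1$ via Remark \ref{obs_S^m_grups}, and concluding periodicity from $d=2$ via Remark \ref{dim(M)=dim (Ima f^k)} before invoking Theorem \ref{kerejarto en S^2}. Your write-up is slightly more explicit about compactness and the $\pi_1$ obstruction, but the logical skeleton is identical.
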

\begin{proof}
By Proposition \ref{retracte_dif}, $\Ima f^k$ is compact connected manifold with $d=$ \break ${\dim (\Ima f^k)\leq 2}$. If $d=0$ clearly $f^k$ is constant. If $d=2$, by Remark \ref{dim(M)=dim (Ima f^k)} $f$ is periodic and by Theorem \ref{kerejarto en S^2} it is topologically conjugated to an element of $O(3)$. Finally, by Remark \ref{obs_S^m_grups} we cannot have $d=1$ since then $\Ima f^k\cong \mathbb S^1$.
\end{proof}

\begin{proposition}
Let $f:\mathbb S^3\rightarrow \mathbb S^3$, be a $\mathcal C^1$--solution of \eqref{ f^n= f^k} where $f^k$ is not constant. Then, $f$ is periodic.
\end{proposition}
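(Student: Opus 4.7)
The plan is a case analysis on $d = \dim(\Ima f^k)$, where by Proposition \ref{retracte_dif} we know $\Ima f^k$ is a compact connected $\mathcal C^1$-submanifold of $\mathbb S^3$, so $d \in \{0,1,2,3\}$.

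First I would dispose of the two extreme cases. If $d = 0$, then $\Ima f^k$ is a single point, which would force $f^k$ to be constant, contradicting the hypothesis. If $d = 3$, then by Remark \ref{dim(M)=dim (Ima f^k)} applied to the connected manifold $\mathbb S^3$, $f$ is periodic, which is what we want.

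Next I would rule out $d = 1$: any compact connected $1$-manifold is diffeomorphic to $\mathbb S^1$, and Remark \ref{obs_S^m_grups} applied with $m = 3$ and $r = 1$ forbids $\Ima f^k \cong \mathbb S^1$.

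The remaining case is $d = 2$, and here is where I would use the homotopy machinery rather than the classification of surfaces directly. By Proposition \ref{prop_grup_fonamental}, $\pi_1(\Ima f^k)$ injects into $\pi_1(\mathbb S^3) = 0$, so $\Ima f^k$ is a compact connected simply connected $2$-manifold. By the classification of closed surfaces, the only such surface is $\mathbb S^2$. But then Remark \ref{obs_S^m_grups} (applied with $m = 3$ and $r = 2$) gives a contradiction, finishing the argument. The only mildly delicate point is knowing that a closed connected surface is determined by its fundamental group up to the simply connected case being $\mathbb S^2$; one can instead invoke Proposition \ref{prop_grup_fonamental} with $l = 2$ to compare $\pi_2(\Ima f^k)$ to $\pi_2(\mathbb S^3) = 0$, which again forces $\Ima f^k$ not to contain $\pi_2(\mathbb S^2) = \mathbb Z$ as a subgroup. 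Either way, the step is quick once the framework of Section~\ref{manifolds} is in place, so I do not expect any real obstacle beyond this tidy case analysis.
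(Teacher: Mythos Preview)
Your proof is correct and follows essentially the same case analysis on $d=\dim(\Ima f^k)$ as the paper. In the $d=2$ case you are in fact slightly more careful: you use $\pi_1$ to reduce to $\Ima f^k\cong\mathbb S^2$ and then rule that out via Remark~\ref{obs_S^m_grups} (i.e.\ via $\pi_2$), whereas the paper's wording handles all closed surfaces at once through $\pi_1$.
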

\begin{proof}
By Proposition \ref{retracte_dif}, $\Ima f^k$ is a compact connected manifold with $d=$ \break$\dim (\Ima f^k)\leq 3$. When $d=0,1$ the argument in Proposition \ref{S^2 f^n=f^k} hold. When $d=3$, $f$ is periodic by  Remark \ref{dim(M)=dim (Ima f^k)}. Finally, if $d=2$, then $\Ima f^k$ is a compact connected 2--manifold. By their classification it is a sphere, the connected sum of projective planes or the connected sum of torus. All these surfaces have a non trivial fundamental group, but $\pi_1(\mathbb S^3)$ is trivial. Thus,  $d\not =2$ by Proposition \ref{prop_grup_fonamental}.
\end{proof}

Finally, we would like to point out that in a torus we can have $\Ima f^k\cong \mathbb S^1$. For instance, take $f:\mathbb S^1\times \mathbb S^1\rightarrow \mathbb S^1\times \mathbb S^1$ defined as $f(x,y)=(x,N)$ where $N$ is the north pole.

\section{Hardy-Weinberg equilibrium}
\label{section_hardy}

The equilibrium of Hardy-Weinberg in its simplest form states that, under certain biological assumptions (see \cite{Yap_simpler}), the proportions of each allelic pair $AA$, $Aa$ and $aa$ in a population with two alleles $A$ and $a$ is constant through time. This result is frequently used in genetic studies, since it allows us to deduce the proportion of each allelic pair only by knowing the proportion of one of them (see \cite[Chapter 11]{Population_dynamics}). We will show that this equilibrium follows from the idempotent nature of the ``offspring'' function.

 Let $a_1,\dots, a_k$  be the alleles of our population and denote by $a_ia_j$ or  $a_ja_i$ the allelic pair with alleles $a_i$ and $a_j$. Then we have $\binom{k+1}{2}$ different types of allelic pairs. Let $x_{i,j}=x_{j,i}$ represent the proportion of the population with allelic pair $a_ia_j$, thus $\sum_{i\leq j}x_{i,j}=1$ and we may  think of $x_{k,k}$ as an affine combination of the other proportions $x_{i,j}$. Let $x\in E=\mathbb R^{\binom{k+1}{2} -1}$, where
\[x=(x_{1,1},x_{1,2},\dots, x_{1,k}, x_{2,2},x_{2,3},\dots,x_{
k-1,k-1},x_{
k-1,k}).\]
Define now the functions $p_i:E\rightarrow \mathbb R$ for $i<k$ as the proportions of the allele $a_i$ in the whole population, that is
 \[p_i(x)=\frac{1}{2}\sum_{j=1}^kx_{i,j} + \frac{1}{2}x_{i,i}.\]
Let $f:E\rightarrow E$, with $f(x)=(f_{i,j}(x))_{i\leq j, i\not =k}$ represent the proportions of each type of allelic pair in $x$'s  offspring. Then, in \cite{Yap_simpler} it is shown that with certain biological assumptions,
\begin{equation*}
  f_{i,j}(x)= \left\{
     \begin{array}{cc}
       2p_i(x)p_j(x),  & i\not = j,\\
       p_i(x)^2,  & i=j.\\
    \end{array}
   \right.
\end{equation*}
Moreover, either by a biological argument or an algebraic computation, it is easy to see that $p_i(f(x))=p_i(x)$. Since $f(x)$ is defined through the values $p_i(x)$, it is clear that $f(f(x))=f(x)$, i.e. $f$ is an idempotent function. Thus, the proportions of allelic pairs can only change from the first to the second generation, and since in biology we do not observe this, we get the desired equilibrium. Furthermore, one can check that $\varphi: E\rightarrow E$ defined as $\varphi(x)=(\varphi_{i,j}(x))_{i\leq j, i\not =k}$ with,
\[
\varphi_{i,j}(x)= \left\{
     \begin{array}{cc}
       x_{i,j}-2p_i(x)p_j(x),  & i\not = j,\\
       p_i(x),  & i=j,\\
    \end{array}
   \right.
\]
bipolynomically conjugates $f$ to a projection of range $k-1$. As we have seen in Example \ref{idemp_exemple} this is a stronger result than the fact that $f$ is idempotent, i.e. the standard statement of Hardy-Weimberg equilibrium.  It is worth pointing out that one gets the same results for hypothetical species where $l$ progenitors have an offspring with $l$ alleles (one from each parent).

In \cite{Yap_simpler}, biological assumptions are slightly weakened by essentially introducing sexes. That is, divide the population into two groups $M$ and $F$ (each of which has their own allelic proportions $x^M$ and $x^F$) and only pairs of individuals of different groups can have offspring. It is seen that the corresponding ``offspring'' function $f=(f^M,f^F):E\times E\rightarrow E\times E$, is given by,
\[f^M_{i,j}(x)=f^F_{i,j}(x)=  \left\{
     \begin{array}{cc}
       p_i^M(x)p_j^F(x)+p_j^M(x)p_i^F(x),  & i\not = j,\\
       p_i^M(x)p_i^F(x),  & i=j,\\
    \end{array}
   \right.\]
where $x=(x^M,x^F)$ and $p^M_i(x)=p_i(x^M)$ (resp. for $p_i^F$). Moreover, they show that $p^M_i(f(x))=p^F_i(f(x))=\frac{p^M_i(x)+p^F_i(x)}{2}$, hence in $\Ima f$ the conditions to apply the classical Hardy-Weinberg equilibrium are satisfied and they conclude that $f^3=f^2$.

We can show that $f^2\not =f$ by taking $k=2$, $x_{1,1}^M=x_{1,2}^M=\frac{1}{2}$, $x_{1,1}^F=\frac{1}{3}$ and  $x_{1,2}^F=\frac{2}{3}$. The dynamics defined by $f$ can be better understood if we consider the bipolinomial conjugation $\varphi=(\varphi^M,\varphi^F):E\times E\rightarrow E\times E$ given by
\[
\varphi^M_{i,j}(x)= \left\{
     \begin{array}{cc}
       x^M_{i,j}-2a_i(x)a_j(x),  & i\not = j,\\
       a_i(x),  & i=j,\\
    \end{array}
   \right. \hspace{0.7cm}
   \varphi^F_{i,j}(x)= \left\{
     \begin{array}{cc}
       x^M_{i,j}-x^F_{i,j},  & i\not = j,\\
       \frac{p^M_i(x)-p^F_i(x)}{2},  & i=j,\\
    \end{array}
   \right.
\]
where $a_i(x)=\frac{p^M_i(x)+p^F_i(x)}{2}$. Then,
\[(\varphi \circ f\circ \varphi^{-1})_{i,j}^M(y)=\left\{\begin{array}{cc}
       -2y_{i,i}^Fy_{j,j}^F,  & i\not = j,\\
       y_{i,i}^M,  & i=j,\\
    \end{array}\right .
\hspace{0.9cm}
(\varphi \circ f\circ \varphi^{-1})^F(y)=0.\]
Despite the simplistic appearance of the previous equation, $f$ is not linearizable. Indeed, for $k=2$ we have,
\[\begin{split}f^{-1}\left(\left(\frac{3}{16},\frac{5}{8}\right),\left(\frac{3}{16},\frac{5}{8}\right )\right)=&\left\{\left(\left(\lambda,-2\lambda + \frac{1}{2}\right),\left( \mu, -2\mu + \frac{3}{2}\right)\right)\hspace{1mm}:\hspace{1mm} \lambda,\mu \in \mathbb R\right\}\\
&\cup\left\{\left(\left(\lambda, -2\lambda + \frac{3}{2}\right),\left(\mu, -2\mu + \frac{1}{2}\right)\right)\hspace{1mm}:\hspace{1mm} \lambda,\mu \in \mathbb R\right\}.\end{split}\]
 Notice that this set has two connected components and hence $f$ can not be conjugated to a linear map. Moreover, it is easy to check that both connected components intersect the ``biologically relevant'' region. That is, the region where $x_{i,j}^M\geq 0$ for all $i\leq j$ with $i\not =k$ and  $\sum_{i\leq j,i\not =k} x_{i,j}^M\leq  1 $ (resp. for $x_{i,j}^F$). Thus, we cannot conjugate $f$ restricted to this region with a linear map either.

\section*{Acknowledgments}

In concluding this paper I would like to express my hearty thanks to Prof. Armengol Gasull for his patient guidance during the development of this paper. I also thank Mireia Roig Mirapeix for carefully reading the manuscript.

\bibliographystyle{plain}
\bibliography{biblio.bib}

\vspace{10pt}
\emph{E-mail address:} {marc.homsdones@maths.ox.ac.uk}

\end{document}